\DeclareSymbolFont{rsfs}{U}{rsfs}{m}{n}
\DeclareSymbolFontAlphabet{\mathscrsfs}{rsfs}
\newtheorem{theorem}{Theorem}[section]
\newtheorem{lemma}[theorem]{Lemma}
\newtheorem{proposition}[theorem]{Proposition}
\newtheorem{corollary}[theorem]{Corollary}
\theoremstyle{definition}
\newtheorem{definition}{Definition}
\numberwithin{equation}{section}
\newcommand{\bea}{\begin{eqnarray}}
\newcommand{\eea}{\end{eqnarray}}
\newcommand{\<}{\langle}
\renewcommand{\>}{\rangle}
\newcommand{\wt}{\widetilde}
\newcommand{\op}{\text{op}}
\newcommand{\wh}{\widehat}
\def\bdd{{\rm bounded}}
\def\solve{{\rm solve}}
\def\all{{\rm all}}
\newcommand\eg{{\text{\eg~}}}
\def\eps{{\varepsilon}}
\def\bg{{\boldsymbol{g}}}
\def\bx{{\boldsymbol{x}}}
\def\cT{{\mathcal T}}
\def\op{{\rm op}}
\def\bsig{{\boldsymbol {\sigma}}}
\def\brho{{\boldsymbol \rho}}
\def\by{{\boldsymbol y}}
\def\bx{{\boldsymbol{x}}}
\def\bA{\boldsymbol{A}}
\def\de{{\rm d}}
\def\<{\langle}
\def\>{\rangle}
\def\cN{{\mathcal N}}
\def\cL{{\mathcal L}}
\def\by{{\boldsymbol{y}}}
\def\b0{{\boldsymbol{0}}}
\def\spec{{\rm spec}}
\def\rd{{\mathrm {rad}}}
\def\spann{{\mathrm {span}}}
\def\bG{{\boldsymbol G}}
\DeclareMathOperator*{\plim}{p-lim}
\def\OPT{{\sf OPT}}
\def\cA{{\mathcal A}}
\def\cS{{\mathcal S}}
\def\cB{{\mathcal B}}
\renewcommand{\b}{\mathbf{b}}
\def\lt{\left}
\def\rt{\right}
\def\la{\langle}
\def\ra{\rangle}
\def\eps{\varepsilon}
\def\bbE{{\mathbb{E}}}
\def\bbN{{\mathbb{N}}}
\def\bbP{{\mathbb{P}}}
\def\bbR{{\mathbb{R}}}
\def\cA{{\mathcal{A}}}
\def\cB{{\mathcal{B}}}
\def\cN{{\mathcal{N}}}
\def\sH{{\mathscr{H}}}
\def\bg{{\mathbf{g}}}
\def\ALG{{\mathsf{ALG}}}
\def\OPT{{\mathsf{OPT}}}
\def\Ssolve{S_{\mathrm{solve}}}
\def\hSsolve{\wh S_{\mathrm{solve}}}
\def\psolve{p_{\mathrm{solve}}}
\def\punstable{p_{\mathrm{unstable}}}
\def\Sstab{S_{\mathrm{stab}}}
\def\Sall{S_{\mathrm{all}}}
\def\hSbdd{\wh S_{\mathrm{bdd}}}
\def\hSall{\wh S_{\mathrm{all}}}
\def\Sbdd{S_{\mathrm{bdd}}}
\def\sph{\mathrm{sp}}
\newcommand{\norm}[1]{{\lt\|#1\rt\|}}
\newcommand{\tnorm}[1]{{\|#1\|}}
\def\Lip{\mathrm{Lip}}
\def\LocLip{\mathrm{LocLip}}
\date{}
\begin{document}

\title{Tight Low Degree Hardness for Optimizing Pure Spherical Spin Glasses}

\author{Mark Sellke}
\address{Department of Statistics, Harvard University} 
\email{msellke@fas.harvard.edu}

\maketitle

\begin{abstract}
    \noindent 
    We prove constant degree polynomial algorithms cannot optimize pure spherical $p$-spin Hamiltonians beyond the algorithmic threshold $\ALG(p)=2\sqrt{\frac{p-1}{p}}$.
    The proof goes by transforming any hypothetical such algorithm into a Lipschitz one, for which hardness was shown previously by the author and B.\ Huang.
\end{abstract}

\section{Introduction}

Let $\bG_N \in \lt(\bbR^N\rt)^{\otimes p}$ be an order $p$ tensor with IID standard Gaussian entries $g_{i_1,\dots,i_p}\sim \cN(0,1)$, where $p\geq 3$ is fixed and $N$ is large.
The pure $p$-spin Hamiltonian $H_N$ is the random polynomial
\begin{equation}
    \label{eq:def-hamiltonian}
    H_N(\bsig)
    =
    N^{-\frac{p-1}{2}}
    \la \bG_N, \bsig^{\otimes p} \ra
    =
    N^{-\frac{p-1}{2}}
    \sum_{i_1,\dots,i_p=1}^N g_{i_1,\dots,i_p}\sigma_{i_1}\dots\sigma_{i_p}
    .
\end{equation}
An equivalent definition is that $H_N$ is a centered Gaussian process on $\bbR^N$ with covariance
\begin{equation}
    \label{eq:def-xi}
\bbE[H_N(\bsig)H_N(\brho)]
=
N
\Big(\frac{\la \bsig,\brho\ra}{N}\Big)^p.
\end{equation}
We study the spherical $p$-spin model, taking $\cS_N=\{\bsig\in\bbR^N~:~\|\bsig\|=\sqrt{N}\}$ to be the domain of $H_N$.

We will be interested in the efficient optimization of $H_N$.
The in-probability limit of the global maximum value as $N\to\infty$ 
\begin{equation}
\label{eq:OPT-def}
\OPT(p)
=
\plim_{N\to\infty}
\max_{\bsig\in\cS_N}H_N(\bsig)/N
\end{equation}
is given by the celebrated Parisi formula \cite{parisi1979infinite,talagrand2006parisi,talagrand2006free,panchenko2013parisi,chen2013aizenman}.
However, it has long been predicted by physicists that Langevin dynamics and other algorithms fail to come anywhere close to reaching this value, due to the extreme non-convexity of the landscape (see e.g.\ \cite{auffinger2013complexity}).

Many computational hardness results for this and closely related random optimization problems have emerged in the past decade, based on a geometric framework known as the overlap gap property \cite{gamarnik2021survey}. 
This method was introduced in \cite{gamarnik2017limits} to show that local algorithms cannot find near-maximal independent sets in sparse random graphs.
It has since been refined significantly \cite{rahman2017independent,wein2020independent} and been applied to show hardness for Max-$k$-SAT \cite{gamarnik2017performance,bresler2021ksat}, densest submatrix \cite{gamarnik2018finding}, the number partitioning problem \cite{gamarnik2021partitioning}, the random perceptron \cite{gamarnik2022algorithms,gamarnik2023geometric,li2024discrepancy}.
In all settings, one considers classes of algorithms which are \emph{stable} to small perturbations in the input (in this case $\bG_N$) and argues that such algorithms rarely succeed simultaneously on a gradually changing sequence of inputs.
For pure mean-field spin glasses, \cite{gamarnik2019overlap,gamarnik2020optimization,huang2025strong} have shown \emph{low-degree hardness} for achieving energy $\OPT$.
In particular the most recent of these showed that for $\eps=\eps(p)>0$ depending only on $p$, degree $o(N)$ polynomial algorithms have probability $o(1)$ to reach objective value $\OPT(p)-\eps$ when $p\geq 4$ is even (stated for the Ising $p$-spin model where $\bsig\in\{\pm 1\}^N$).

The results mentioned above show computational hardness of reaching asymptotic optimality, often with relatively sharp quantitative bounds, but are generally unable to characterize the \emph{exact} computational limit of efficient algorithms.
For the spherical $p$-spin model, physicists have in fact predicted (see e.g.\ \cite{cugliandolo1994out,biroli1999dynamical}) an exact threshold energy
\[
\ALG(p)=2\sqrt{\frac{p-1}{p}}<\OPT(p),
\]
for optimization.
This belief has also seen a flurry of recent progress.
It is now known that $\ALG(p)$ is indeed achievable using Hessian ascent \cite{subag2018following}, approximate message passing \cite{montanari2021optimization,ams20}, and low-temperature Langevin dynamics \cite{sellke2023threshold}.
In the opposite direction, the author and B.\ Huang \cite{huang2021tight,huang2023algorithmic} introduced the \emph{branching overlap gap property} to show that algorithms with dimension-free Lipschitz constant (including gradient descent and Langevin dynamics for short time) cannot exceed $\ALG(p)$.
Similar results hold for the more general mixed $p$-spin models, which in the Ising case also yield exact thresholds in certain Max-CSPs \cite{jones2023random}.

The hardness results of \cite{huang2021tight,huang2023algorithmic} are exact, but apply only to Lipschitz algorithms, which are more restricted than general stable algorithms (as discussed further in Subsection~\ref{subsec:discussion}).
In particular, stable algorithms include low-degree polynomials, which are often considered a good proxy for all polynomial time algorithms.
Nonetheless, in random optimization problems with no planted signal, it appears likely that Lipschitz or local algorithms suffice to achieve the best performance attainable by any polynomial time algorithm.
A natural route to provide evidence for this belief is to show that intermediate algorithm classes cannot outperform the exact thresholds for Lipschitz algorithms established by \cite{huang2021tight,huang2023algorithmic}.

Our main result shows that for optimizing spherical $p$-spin glasses, low-degree polynomials indeed cannot surpass the threshold $\ALG(p)$.
Namely for any $E>\ALG(p)$, degree $O(1)$ polynomials have probability $o_{N\to\infty}(1)$ to achieve energy at least $E$.

\subsection{Statement of Results}

For our purposes, an \emph{algorithm} is a measurable function $\cA_N:\sH_N\times\Omega_N\to\cB_N$, where $\cB_N=\{\bsig\in\bbR^N~:~\|\bsig\|\leq\sqrt{N}\}$ is the convex hull of $\cS_N$, and $\sH_N=\bbR^{N^p}$ is the state space for $H_N$ (identified with its coefficient tensor $\bG_N$), and $\Omega_N$ is an arbitrary Polish space.
(Note that since $H_N$ is homogeneous, extending the codomain from $\cS_N$ to $\cB_N$ does not impact e.g.\ the value in \eqref{eq:OPT-def}.)
The output of the algorithm is $\cA_N(H_N,\omega)$ where $\omega\in\Omega_n$ is drawn independently of $H_N$ (thus allowing $\cA_N$ to be randomized).
We first recall existing results on Lipschitz hardness, for both comparison and later use.
$\cA_N$ is $L$-Lipschitz if for each fixed $\omega$, the map $H_N\mapsto \cA_N:(H_N,\omega)$ is $L$-Lipschitz; here $H_N$ is identified with its $p$-tensor of coefficients and metrized using the un-normalized Euclidean distance (see \eqref{eq:hamiltonian-norm}).

\begin{proposition}[{\cite{huang2021tight,huang2023algorithmic}}]
\label{prop:BOGP-input}
Fix an integer $p\geq 3$ and let $H_N$ be a pure spherical $p$-spin Hamiltonian.
For any $L>0$ and $E>\ALG(p)$ there exists $c>0$ such that if $\cA_N$ is an $L$-Lipschitz algorithm, then
\[
\bbP[
H_N(\cA_N(H_N))/N \geq E]
\leq 
e^{-cN}.
\]
\end{proposition}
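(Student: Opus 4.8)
The plan is to prove Proposition~\ref{prop:BOGP-input} via the \emph{branching overlap gap property}, in three moves: a concentration reduction, construction of a tree‑correlated ensemble of Hamiltonians on which $\cA_N$ is run in parallel, and a first‑moment (Kac--Rice / volume) computation showing the resulting tuple of outputs is forbidden once $E>\ALG(p)$.

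\textbf{Step 1 (concentration).} Write $X_N=H_N(\cA_N(H_N))/N$; after conditioning on the internal randomness we may take $\cA_N$ deterministic. With probability $1-e^{-\Omega(N)}$ the Hamiltonian $H_N$ is $O(\sqrt N)$‑Lipschitz in $\bsig$ on $\cB_N$; it is also linear in the disorder $\bG_N$ with coefficient tensor of norm $\le\sqrt N$; combined with $L$‑Lipschitzness of $\cA_N$ this makes $X_N$ an $O((L+1)/\sqrt N)$‑Lipschitz function of $\bG_N\sim\cN(0,I_{N^p})$ off a negligible event, so $\bbP[|X_N-\bbE X_N|\ge t]\le e^{-cNt^2/(L+1)^2}+e^{-\Omega(N)}$. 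Hence if the proposition failed for some $E>\ALG(p)$ we would have $\liminf_N-\tfrac1N\log\bbP[X_N\ge E]=0$, forcing $\bbE X_N\ge E-\eps$ along a subsequence and thus $\bbP[X_N\ge E'']\ge1-\delta_N$ with $\delta_N\to0$ for a fixed $E''>\ALG(p)$. It therefore suffices to rule out per‑instance success with probability bounded below.

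\textbf{Step 2 (branching ensemble).} Fix a rooted tree $\cT$ of constant depth $D$ and constant branching, with $D$ large depending only on $p,E'',L$, and couple a family $\{H_N^{(v)}\}_{v\in\cT}$, each marginally a pure spherical $p$‑spin Hamiltonian, whose joint law is ultrametric: every root‑to‑leaf path evolves through many small increments of fresh disorder, with two vertices sharing the prefix of increments up to their most recent common ancestor. Choosing the increments small relative to $L$ guarantees that consecutive Hamiltonians along a path differ by $o(\sqrt N)$ whp, so the outputs $\bsig^{(v)}:=\cA_N(H_N^{(v)})$ vary continuously along $\cT$: in particular $\langle\bsig^{(v)},\bsig^{(w)}\rangle/N$ is forced near $1$ when $v,w$ have a deep common ancestor, and the whole leaf‑overlap array, whp, obeys the hierarchical constraints imposed by $\cT$ up to $o(1)$. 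Since each $H_N^{(\ell)}$ is marginally a $p$‑spin Hamiltonian, Step 1 gives each leaf energy $\ge E''$ with probability $\ge1-\delta_N$; a union bound over the $O(1)$ leaves shows that along the subsequence $(\bsig^{(\ell)})_\ell$ is, whp, a tuple of near‑optimal configurations tied together by the tree.

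\textbf{Step 3 (overlap gap, and the main obstacle).} The branching overlap gap property for the spherical $p$‑spin model asserts that, for $E''>\ALG(p)=\int_0^1\sqrt{\xi''(t)}\,\diff t$ with $\xi(t)=t^p$ and $D$ large, the expected number -- over a fine net of overlap patterns compatible with $\cT$ -- of tuples $(\bsig^{(\ell)})_\ell$ with every $H_N^{(\ell)}(\bsig^{(\ell)})/N\ge E''$ is $e^{-\Omega(N)}$; this is a first‑moment computation whose threshold is $\ALG(p)$ because the hierarchical perturbation along $\cT$ amounts, as $D\to\infty$, to demanding robustness of a near‑optimal configuration against a full continuous ultrametric family, whose best achievable energy is exactly $\int_0^1\sqrt{\xi''}$ in the manner of Subag's Hessian‑ascent analysis. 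By Markov's inequality such a tuple exists with probability $e^{-\Omega(N)}$, so the event of Step 2 has probability $\le e^{-\Omega(N)}+\bbP[\text{overlaps violate }\cT]=e^{-\Omega(N)}$, contradicting the lower bound $1-2^D\delta_N\to1$; this proves the proposition. The heart of the matter is obtaining the first‑moment bound with the \emph{sharp} constant $\ALG(p)$ rather than merely some constant below $\OPT(p)$, i.e.\ matching the $D\to\infty$ tree computation to $\int_0^1\sqrt{\xi''}$, together with the delicate calibration in Step 2: the increments must be small enough (given $L$) that Lipschitzness pins the overlaps into the regime handled here, yet remain non‑degenerate so the ensemble genuinely branches. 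This calibration is precisely where the Lipschitz hypothesis is used; without it the statement is false, since a measurable $\cA_N$ may output the global maximizer.
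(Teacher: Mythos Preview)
The paper does not prove Proposition~\ref{prop:BOGP-input}; it is imported wholesale from \cite{huang2021tight,huang2023algorithmic} and used as a black-box input to the reduction. So there is no in-paper argument to compare against, only the proofs in those references, whose overall architecture your sketch does follow.

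That said, your sketch has two genuine gaps relative to what those proofs actually supply. In Step~2 you justify the hierarchical overlap constraint only via ``consecutive outputs are close along a path,'' which forces overlaps near $1$ for adjacent vertices but says nothing about leaves with a \emph{shallow} common ancestor. What the cited works actually use is \emph{overlap concentration}: for an $L$-Lipschitz $\cA_N$, the scalar $\langle\cA_N(H_N),\cA_N(H_{N,q})\rangle/N$ is an $O((L{+}1)/\sqrt N)$-Lipschitz function of the joint Gaussian disorder and hence concentrates about a deterministic value $\chi(q)$. This, not path-continuity, is what pins the \emph{entire} leaf-overlap array to a fixed ultrametric pattern depending only on common-ancestor depth; without it the BOGP obstruction cannot be invoked, since the forbidden-constellation bound applies only once the overlap profile is specified. (The paper itself flags overlap concentration as the operative hypothesis in Subsection~\ref{subsec:discussion}.)

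In Step~3 you assert the sharp threshold $\ALG(p)=\int_0^1\sqrt{\xi''}$ but do not derive it; the appeal to ``first-moment (Kac--Rice / volume)'' and the analogy with Subag's ascent is a heuristic, not a proof. In \cite{huang2021tight,huang2023algorithmic} the matching to $\ALG(p)$ is obtained by a Guerra-type interpolation upper bound on a constrained multi-replica free energy, together with a carefully chosen sequence of branching times and correlation levels that discretizes the functional $\int_0^1\sqrt{\xi''}$; showing that for any $E''>\ALG(p)$ this bound drops strictly below zero as the tree is refined is the technical heart of the argument and is simply missing from your write-up. Your Step~1 is correct and the global plan is the right one, but Steps~2 and~3 are exactly where the content lies, and as written they are assertions rather than arguments.
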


The main new result of this paper establishes hardness at the same threshold $\ALG(p)$ for a larger class of \emph{stable} algorithms, which includes both Lipschitz algorithms and low-degree polynomials.
To define this class, we say a pair $(H_N,\wt H_N)$ of $p$-spin Hamiltonians (each with marginal law as in \eqref{eq:def-hamiltonian}) to be $q$-correlated for $q\in [0,1]$ if their joint law is given by $\wt H_N=qH_N+\sqrt{1-q^2}H_N'$ for $H_N'$ an IID copy of $H_N$.

\begin{definition}
\label{def:stability}
Given $S>0$ and with $H_N,H_{N,1-\eps}$ denoting $(1-\eps)$-correlated $p$-spin Hamiltonians, we say $\cA_N$ is $(S,\eps)$-stable if
\[
\bbE[\|\cA_N(H_N)-\cA(H_{N,1-\eps})\|^2/N]\leq S\eps.
\]
Further, $\cA_N$ is $S$-stable if it is $(S,\eps)$-stable for all $\eps>0$, and the sequence $(\cA_N)_{N\geq 1}$ is asymptotically $S$-stable if for every $\eps>0$, it is $(S,\eps)$ stable for all $N\geq N(\eps)$ sufficiently large.
\end{definition}

\begin{theorem}
\label{thm:main}
Fix $S>0$ and $E>\ALG(p)$.
Let $(\cA_N)_{N\geq 1}$ be an asymptotically $S$-stable sequence of algorithms.
Then
\[
\lim_{N\to\infty}
\bbP[H_N(\cA_N(H_N))/N\geq E]
=
0.
\]
\end{theorem}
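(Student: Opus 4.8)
The plan is to reduce Theorem~\ref{thm:main} to Proposition~\ref{prop:BOGP-input} by showing that any asymptotically $S$-stable sequence of algorithms can be replaced, without essentially changing its objective value, by a genuinely $L$-Lipschitz algorithm for some $L=L(S,\eps)$. The obstruction is that an $(S,\eps)$-stable algorithm controls the $L^2$ movement only under a \emph{specific} correlated perturbation of the Hamiltonian, not under arbitrary perturbations of the coefficient tensor; so one cannot simply invoke an abstract ``stable $\Rightarrow$ Lipschitz after smoothing'' principle. The natural fix is a \emph{resampling/interpolation smoothing} tailored to the Ornstein--Uhlenbeck structure of $q$-correlated Hamiltonians.

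Concretely, I would fix a small $\eps>0$ and introduce an auxiliary Gaussian field: let $H_N^{(t)}$ for $t\in[0,\infty)$ be the stationary Ornstein--Uhlenbeck process on $\sH_N$ with invariant law \eqref{eq:def-hamiltonian}, so that $H_N^{(0)}=H_N$ and $(H_N^{(0)},H_N^{(t)})$ are $e^{-t}$-correlated. Then define the smoothed algorithm $\cA_N^{\delta}(H_N,\omega,\omega')=\bbE_{\omega'}\big[\cA_N(H_N^{(\delta)},\omega)\big]$, i.e.\ run the original algorithm on a slightly resampled copy of the Hamiltonian (or, better, average the OU semigroup over a short random time). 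Because the OU semigroup is a contraction that mollifies in all directions of coefficient space, $\cA_N^{\delta}$ should be Lipschitz in $H_N$ with constant controlled by $\delta^{-1/2}$ times the $L^2$-diameter $\sqrt{N}$ of $\cB_N$, hence $O(\sqrt{N}/\sqrt{\delta})$ --- but this is too large by a factor $\sqrt N$. The resolution is that Proposition~\ref{prop:BOGP-input} is stated for $L$-Lipschitz maps with $H_N$ metrized by the \emph{un-normalized} Euclidean distance \eqref{eq:hamiltonian-norm}, and the relevant Gaussian input has norm of order $\sqrt{N^p}$, not $O(1)$; one checks that the OU smoothing over time $\delta$ produces a map whose Lipschitz constant in that metric is $O(1/\sqrt{\delta})$ after using that the heat/OU kernel on $\bbR^{N^p}$ mollifies at the correct scale, uniformly in $N$. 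This is exactly the regime where Proposition~\ref{prop:BOGP-input} applies with $L=L(\delta)$.

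The remaining point is to show $\cA_N^{\delta}$ still achieves energy $\geq E-o(1)$ with non-vanishing probability whenever $\cA_N$ does. For this I would bound, using $(S,\delta)$-stability,
\[
\bbE\big[\|\cA_N^{\delta}(H_N,\omega,\omega')-\cA_N(H_N,\omega)\|^2/N\big]
\leq
\bbE\big[\|\cA_N(H_N^{(\delta)},\omega)-\cA_N(H_N,\omega)\|^2/N\big]
\leq
S\cdot(1-e^{-\delta})
\leq
S\delta,
\]
where the first inequality is Jensen (the $\omega'$-average can only decrease $L^2$ distance from a point), and the second uses that $(H_N,H_N^{(\delta)})$ are $e^{-\delta}$-correlated together with asymptotic $(S,1-e^{-\delta})$-stability, valid for $N\geq N(\delta)$. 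Combined with the uniform Lipschitz bound $\|H_N(\bx)-H_N(\by)\|\lesssim \|\bx-\by\|\cdot\sqrt N$ on $\cB_N$ (an elementary concentration estimate for the polynomial $H_N$, with Lipschitz constant $O(\sqrt N)$ uniformly over $\cB_N$ with probability $1-e^{-cN}$), we get
\[
\bbE\big[|H_N(\cA_N^{\delta}(H_N))-H_N(\cA_N(H_N))|/N\big]
\lesssim
\sqrt{S\delta}\quad\text{w.h.p.},
\]
so by Markov, on an event of probability $1-O(\sqrt[4]{\delta})$ the two objective values differ by at most $O(\delta^{1/4})$. Choosing $\delta$ small enough that $E-O(\delta^{1/4})>\ALG(p)$ and applying Proposition~\ref{prop:BOGP-input} to the $L(\delta)$-Lipschitz algorithm $\cA_N^{\delta}$ forces $\bbP[H_N(\cA_N^{\delta}(H_N))/N\geq E-O(\delta^{1/4})]\leq e^{-cN}$, and combining with the comparison bound yields $\bbP[H_N(\cA_N(H_N))/N\geq E]=O(\delta^{1/4})+o(1)$; since $\delta$ was arbitrary this gives the theorem. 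The main obstacle is making the Lipschitz estimate for $\cA_N^{\delta}$ \emph{dimension-free} in the sense required by Proposition~\ref{prop:BOGP-input}: one must verify that smoothing by the OU semigroup on $\bbR^{N^p}$ for a fixed time $\delta$ yields a Lipschitz constant that does not blow up with $N$ once the correct un-normalized metric is used, and that the intermediate field $H_N^{(\delta)}$ stays in the class of pure $p$-spin Hamiltonians so that Proposition~\ref{prop:BOGP-input} is literally applicable.
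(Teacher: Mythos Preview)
Your proposal has a genuine gap at exactly the point you flag as ``the main obstacle'': the Ornstein--Uhlenbeck smoothing does \emph{not} produce a dimension-free Lipschitz constant, and stability alone cannot rescue this.

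Concretely, for a bounded map $f:\bbR^{N^p}\to\cB_N$ with $\|f\|_\infty\le\sqrt N$, the smoothed map $P_\delta f(x)=\bbE_Z[f(e^{-\delta}x+\sqrt{1-e^{-2\delta}}\,Z)]$ has Lipschitz constant of order $\sqrt{N/\delta}$ in the un-normalized metric on $\sH_N$, not $O(1/\sqrt\delta)$. This follows from the Cameron--Martin change of measure: the directional derivative in a unit direction $v$ is $\frac{e^{-\delta}}{\sqrt{1-e^{-2\delta}}}\,\bbE_Z[f(\cdots)(v\cdot Z)]$, and bounding $\|\bbE_Z[f(\cdots)(v\cdot Z)]\|$ by $\|f\|_\infty\,\bbE|v\cdot Z|\asymp\sqrt N$ is tight for generic $f$. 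So to get $L=O(1)$ you would need $\delta\gtrsim N$, which destroys the energy-comparison step.

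Stability does not help because it is an \emph{average} guarantee over the law of $H_N$, whereas Lipschitzness in Proposition~\ref{prop:BOGP-input} is a \emph{pointwise} requirement on the map $H_N\mapsto\cA_N(H_N,\omega)$. Knowing $\bbE_{H_N}\|\cA_N(H_N)-\cA_N(H_{N,1-\eps})\|^2\le S\eps N$ tells you nothing about $\|P_\delta\cA_N(x)-P_\delta\cA_N(x')\|$ for a fixed pair $x,x'$; there is no mechanism by which the $L^2$ bound under random correlated perturbation transfers to a worst-case operator-norm bound on $\nabla P_\delta\cA_N$. This obstruction is precisely why the paper's proof is not a one-line smoothing argument: it instead reduces optimization to \emph{well-finding} (Proposition~\ref{prop:GD-finds-well}), then builds a genuinely Lipschitz algorithm by \emph{state following}---tracking a moving approximate critical point along a path of correlated Hamiltonians, exploiting the local strict concavity of wells (modulo a bounded number of outlier Hessian directions) to make each step contractive and hence Lipschitz. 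The landscape structure of the pure $p$-spin model (Proposition~\ref{prop:wells-energy-for-pure}) is used in an essential way; a model-agnostic smoothing cannot replace it.
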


We now explain why Theorem~\ref{thm:main} subsumes low-degree hardness.
Say $\cA^{\circ}_N:\sH_N\times\Omega_N\to\bbR$ is a degree $D$ polynomial if each coordinate of its output is a degree $D$ polynomial (for fixed $\omega$).
We say $\cA_N$ is $C$-regular if $\bbE[\|\cA_N^{\circ}(H_N,\omega)\|^2]\leq CN$.
It follows from e.g.\ \cite[Lemma 3.4 and Theorem 3.1]{gamarnik2020optimization} that $\cA_N^{\circ}$ is $2CD$-stable.
Since $\cA_N^{\circ}$ has no reason to be $\cB_N$-valued, we consider the rounded algorithm 
\[
\cA_N(H_N,\omega)
=
\cA^{\circ}_N(H_N,\omega)\cdot \varphi(\|\cA^{\circ}(H_N,\omega)\|/\sqrt{N}),
\]
where $\varphi(x)=\min(1,1/x)$ for all $x\geq 0$.
In other words, $\cA(H_N,\omega)$ is the closest point to $\cA^{\circ}_N(H_N,\omega)$ within $\cB_N$.
Since the rounding $\cA^{\circ}_N\mapsto \cA_N$ is $1$-Lipschitz, it follows that $\cA_N$ is $2CD$-stable.
Hence the following form of low-degree hardness is a direct consequence of Theorem~\ref{thm:main}.

\begin{corollary}
\label{cor:main}
Fix $D,C,\eps>0$.
For each $N$, let $\cA_N$ be a rounded $C$-regular  degree $D$ polynomial algorithm.
Then
\[
\lim_{N\to\infty}
\bbP[H_N(\cA_N(H_N))/N\geq \ALG(p)+\eps]
=
0.
\]
\end{corollary}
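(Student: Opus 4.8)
The plan is to obtain Corollary~\ref{cor:main} as an immediate consequence of Theorem~\ref{thm:main}: I will verify that a rounded $C$-regular degree-$D$ polynomial algorithm is $S$-stable with $S=2CD$, and then apply the theorem with $E=\ALG(p)+\eps$. Thus essentially all of the work is already carried out in the proof of Theorem~\ref{thm:main}; the only additional input is the classical observation that low-degree polynomials are stable under Ornstein--Uhlenbeck perturbation, together with the (trivial but necessary) remark that the $(1-\eps)$-correlated coupling of Hamiltonians in Definition~\ref{def:stability} is literally such a perturbation of the Gaussian coefficient tensor $\bG_N$.

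\emph{Step 1: stability of the unrounded polynomial.} Fix $\omega\in\Omega_N$ and view each of the $N$ output coordinates of $\cA^{\circ}_N(\cdot,\omega)$ as a degree-$\leq D$ polynomial in the IID standard Gaussian entries of $\bG_N$. Expanding such a coordinate $P$ in the $L^2$-normalized Hermite basis $\{\He_\alpha\}$ (indexed by multi-indices $\alpha$, with $\bbE[\He_\alpha^2]=1$) as $P=\sum_{|\alpha|\leq D}c_\alpha\He_\alpha$, one has $\bbE[P(\bG_N)^2]=\sum_\alpha c_\alpha^2$. If $(H_N,H_{N,1-\eps})$ is $(1-\eps)$-correlated then its coefficient tensors satisfy $\bG_{N,1-\eps}=(1-\eps)\bG_N+\sqrt{1-(1-\eps)^2}\,\bG_N'$ for an independent IID Gaussian tensor $\bG_N'$, so the Mehler (OU eigenfunction) identity gives $\bbE[\He_\alpha(\bG_N)\He_\beta(\bG_{N,1-\eps})]=(1-\eps)^{|\alpha|}\mathbbm{1}\{\alpha=\beta\}$, whence
\[
\bbE\big[(P(\bG_N)-P(\bG_{N,1-\eps}))^2\big]
=2\sum_{|\alpha|\leq D}c_\alpha^2\big(1-(1-\eps)^{|\alpha|}\big)
\leq 2D\eps\,\bbE[P(\bG_N)^2],
\]
using $1-(1-\eps)^k\leq k\eps\leq D\eps$ for $0\leq k\leq D$. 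Summing over the $N$ coordinates, taking expectation over $\omega$, and invoking $C$-regularity $\bbE[\|\cA^{\circ}_N(H_N,\omega)\|^2]\leq CN$ yields $\bbE[\|\cA^{\circ}_N(H_N)-\cA^{\circ}_N(H_{N,1-\eps})\|^2/N]\leq 2CD\eps$, i.e.\ $\cA^{\circ}_N$ is $2CD$-stable; this is exactly \cite[Lemma 3.4 and Theorem 3.1]{gamarnik2020optimization}, recalled here to make the dependence on $(C,D)$ explicit.

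\emph{Step 2: rounding, and conclusion.} By construction $\cA_N(H_N,\omega)$ is the nearest point of $\cB_N$ to $\cA^{\circ}_N(H_N,\omega)$, i.e.\ the Euclidean projection onto the convex ball $\cB_N$; projection onto a convex set is $1$-Lipschitz, so pointwise (and hence in $L^2$) $\|\cA_N(H_N)-\cA_N(H_{N,1-\eps})\|\leq\|\cA^{\circ}_N(H_N)-\cA^{\circ}_N(H_{N,1-\eps})\|$. Therefore $\cA_N$ is again $2CD$-stable, and in particular $(\cA_N)_{N\geq1}$ is asymptotically $2CD$-stable. Applying Theorem~\ref{thm:main} with $S=2CD$ and $E=\ALG(p)+\eps$ gives $\lim_{N\to\infty}\bbP[H_N(\cA_N(H_N))/N\geq\ALG(p)+\eps]=0$, as claimed.

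\emph{Main difficulty.} There is no serious obstacle at this stage: the corollary is a packaging of Theorem~\ref{thm:main}, and the single new estimate (Step~1) is a one-line Hermite computation. The only things to be careful about are the noise parametrization ($q=1-\eps$ versus the $\sqrt{1-q^2}$ weight on the independent copy) and the fact that the truncation $\varphi(x)=\min(1,1/x)$ does not inflate the stability constant --- which holds because it realizes the $1$-Lipschitz projection onto $\cB_N$.
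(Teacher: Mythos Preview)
Your proof is correct and follows exactly the approach the paper itself takes: the paper states (in the paragraph immediately preceding Corollary~\ref{cor:main}) that $C$-regular degree-$D$ polynomials are $2CD$-stable by \cite[Lemma 3.4 and Theorem 3.1]{gamarnik2020optimization}, that the $1$-Lipschitz rounding preserves this, and that the corollary is then a direct consequence of Theorem~\ref{thm:main}. You have simply unpacked the cited stability lemma via the Hermite/Mehler computation, which is precisely its proof.
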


\subsection{Discussion}
\label{subsec:discussion}

Recent work on spin glass landscapes and related problems points towards the following general belief: for random mean-field objective functions $H_N:\bbR^N\to\bbR$ without planted signal, there is an exact algorithmic threshold $\ALG$ which can be achieved in input-linear time using approximate message passing \cite{montanari2021optimization,ams20,chen2023local,huang2024optimization,montanari2024exceptional} and/or Hessian ascent \cite{subag2018following,montanari2023solving,jekel2025potential}.
By contrast, surpassing $\ALG$ seems likely to require exponential running time $\exp(\tilde\Omega(N))$.
While the former results can generally be made constructive, the latter belief cannot be proved rigorously without first solving $P$ vs $NP$.
In lieu of this, a natural goal in establishing computational hardness is to prove failure of restricted classes of algorithms.
One prominent such class consists of \emph{low-degree polynomials}.
The well-established low-degree heuristic \cite{hopkins2018statistical,kunisky2019notes} posits that degree $\tilde O(D)$ polynomials tend to have the same computational power as time $\exp(\tilde O(D))$ algorithms.
In particular, if degree $o(N)$ polynomials are unable to surpass $\ALG$, this would provide evidence that no sub-exponential $\exp(N^{1-\Omega(1)})$-time algorithm can do so.

As mentioned previously, progress towards such algorithmic hardness has come in two forms.
Works such as \cite{gamarnik2019overlap,gamarnik2020optimization,huang2025strong} have shown that $o(N)$-degree polynomials cannot reach energy $\OPT$ (for $p\geq 4$ even).
These works leverage stability via the \emph{ensemble overlap gap property} (e-OGP), gradually deforming $H_N$ and arguing that if the algorithmic outputs change gradually, some solution along the way must be suboptimal.
Separately, \cite{huang2021tight,huang2023algorithmic} show that \emph{overlap-concentrated} algorithms, which include all $o(\sqrt{N})$-Lipschitz algorithms, cannot exceed the threshold $\ALG$.
Overlap concentration means that given a $q$-correlated pair $H_N,H_{N,q}$, the overlap $\la\cA_N(H_N),\cA_N(H_{N,q})\ra/N\in [-1,1]$ between the corresponding outputs concentrates sharply around its expectation, uniformly in $q\in [0,1]$.
The technique is based on the \emph{branching overlap gap property} (BOGP), which constructs a densely branching ultrametric tree of solutions by applying the same algorithm to a suitably correlated family of $p$-spin Hamiltonians.
Many standard high-dimensional optimization algorithms do obey overlap concentration, including gradient descent and Langevin dynamics on dimension-free time-scales.
On the other hand, it is not hard to show that low-degree polynomials encompass Lipschitz algorithms, in the sense that any $D$-Lipschitz algorithm can be approximated by an $O(D^2)$-degree polynomial in $L^2$ (by truncating its Hermite polynomial expansion).

Ideally, one would like to combine the above results to show the threshold $\ALG$ also obstructs low-degree and other stable algorithms, but this appears difficult in general.
Roughly speaking, the branching OGP requires simultaneous control over all pairwise distances among some constellation of solutions, which is too much information for stability to handle (though see the multi-ensemble-OGP from \cite{gamarnik2021partitioning,gamarnik2022algorithms} for an approach in a related setting).
The same type of question is also very natural in closely related optimization problems on sparse random graphs such as MaxCut or MaxSAT.
Here, the analog of Lipschitz algorithms is the class of \emph{local} algorithms, also known as factors of IID \cite{elek2010borel,lyons2011perfect,lyons2017factors,chen2019suboptimality,alaoui2023local,jones2023random}. 
One may conjecture that local algorithms for random sparse instances of e.g.\ MaxCut or MaxSAT are optimal among polynomial time algorithms, and in particular among low-degree polynomials. Unfortunately no general statement to this effect is known. 
By showing that stability suffices for tight algorithmic hardness, Theorem~\ref{thm:main} constitutes the first progress of its kind towards unifying these two classes of algorithms.

\subsubsection*{Discussion of Proof Technique}

Surprisingly, our proof of Theorem~\ref{thm:main} is not based on a direct OGP argument.
Instead, we give a reduction from low-degree to Lipschitz algorithms.
This is similar in spirit to the work \cite{montanari2024equivalence}, which reduced low-degree polynomials to approximate message passing for the problem of rank $1$ matrix estimation, but used convexity and symmetry properties unavailable in our setting that stem from the signal estimation nature of their problem. (See also \cite{brennan2021statistical} which provided a partial equivalence between low-degree polynomials and statistical query algorithms.)


The special property of \emph{pure} spherical spin glasses we use is that all (approximate) stationary points at energy above $\ALG(p)$ are wells: the Hamiltonian is locally strictly concave at such points, except possibly for $O(1)$ outlier eigenvalues.
This allows us to instead consider the problem of finding such a well; this has separately been conjectured to be algorithmically hard in some generality by \cite{behrens2022dis}, who proposed that \emph{``omnipresent marginal stability in glasses is a consequence of computational hardness''}.
This conjecture was confirmed in the form of hardness for $o(N)$-degree polynomials in the Sherrington--Kirkpatrick spin glass by \cite{huang2025strong}, and for Langevin dynamics for spherical spin glasses.
However the spherical case remains open more generally, even for Lipschitz algorithms.
The main technical thrust of this paper is to show that for the task of well-finding, \emph{Lipschitz and stable algorithms are equivalent}.
This equivalence seems to be valid in some generality.
For pure spherical spin glasses, the special property above then lets us convert the Lipschitz hardness result from \cite{huang2021tight} into tight low-degree hardness.

To prove this equivalence between Lipschitz and stable algorithms, we argue that along a slowly varying path of Hamiltonians, any stable algorithm which outputs wells has very limited freedom at each step, essentially parametrized by a constant-dimensional subspace (corresponding to possible outlier Hessian eigenspaces).
Modulo these low-dimensional subspaces, one can thus recover the output of $\cA_N(H_N)$ by starting from $(H_N',\cA_N(H_N'))$ for an independent $H_N'$ and tracing the moving wells along a continuous path of Hamiltonians from $H_N'$ to $H_N$.
By showing that this ``state following'' procedure can be turned into a Lipschitz algorithm, we deduce that stable algorithms cannot outperform Lipschitz ones; see Subsection~\ref{subsec:overview} for a more detailed overview.
We note that homotopy methods of this kind have significant precedent in both spin glass physics and algorithm design more broadly \cite{barrat1997temperature,zdeborova2010generalization,sun2012following,beltran2011fast,burgisser2011problem}.

\subsection{From Optimization to State Following}

The crucial property of pure $p$-spin Hamiltonians we use comes from a close link between $H_N(\bsig)$ and the Riemannian Hessian $\nabla_{\sph}^2 H_N(\bsig)$, defined below.
For each $\bsig \in S_N$, let $\{e_1(\bsig),\ldots,e_N(\bsig)\}$ be an orthonormal basis of $\bbR^N$ with $e_1(\bsig) = \bsig / \sqrt{N}$.
Let $\cT = \{2,\ldots,N\}$.
Let $\nabla_\cT H_N(\bsig) \in \bbR^\cT$ denote the restriction of $\nabla H_N(\bsig) \in \bbR^N$ to the space spanned by $\{e_2(\bsig),\ldots,e_N(\bsig)\}$, and $\nabla^2_{\cT \times \cT} H_N(\bsig) \in \bbR^{\cT \times \cT}$ analogously.
Define the tangential and rescaled radial derivatives:
\begin{equation}
\label{eq:spherical-calculus-def}
    \nabla_\sph H_N(\bsig) = \nabla_{\cT} H_N(\bsig),
    \qquad \partial_\rd H_N(\bsig) = \lt\la e_1(\bsig), \nabla H_N(\bsig)\rt\ra/\sqrt{N}.
\end{equation}
Then the Riemannian Hessian is:
\begin{equation}
\label{eq:riemannian-hessian}
    \nabla^2_{\sph} H_N(\bsig) = \nabla^2_{\cT \times \cT} H_N(\bsig) - \partial_\rd H_N(\bsig) I_{\cT \times \cT}.
\end{equation}
As suggested by \eqref{eq:riemannian-hessian}, it can be shown that the bulk spectrum of $\nabla^2_{\sph} H_N(\bsig)$ is well approximated (uniformly in $\bsig$) by a rescaled Wigner semicircle density, shifted by $\partial_\rd H_N(\bsig)$. 
Because pure $p$-spin Hamiltonians are homogeneous, one deterministically has 
\begin{equation}
\label{eq:radial-deriv-propto}
\partial_\rd H_N(\bsig)=pH_N(\bsig)/N,\quad\forall \bsig\in\cS_N.
\end{equation}
The connection is encapsulated by the next foundational result (we write $\lambda_j$ for the $j$-th largest eigenvalue of a symmetric matrix).

\begin{proposition}[{\cite[Lemma 3]{subag2018following}}]
\label{prop:wells-energy-for-pure}
    For any $\gamma>0$ there are $k=k(\gamma)$ and $\delta=\delta(\gamma)>0$ such that
    \[
    \big|
    \lambda_j\big(\nabla_{\sph}^2 H_N(\bsig)\big)
    -
    2\sqrt{p(p-1)}
    -
    pH_N(\bsig)
    \big|
    \leq
    \gamma/2
    \]
    holds for all $k\leq j\leq \delta N$ and $\bsig\in\cS_N$ simultaneously, with probability $1-e^{-cN}$.
\end{proposition}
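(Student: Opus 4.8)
The plan is to recognize $\nabla^2_\sph H_N(\bsig)$, after removing a deterministic shift, as a rescaled $\GOE$ matrix, and then to control the bulk of that matrix uniformly over $\cS_N$ by feeding large-deviation estimates for $\GOE$ eigenvalues into a net argument. Because $H_N$ is homogeneous of degree $p$, Euler's identity gives $\la\bsig,\nabla H_N(\bsig)\ra=pH_N(\bsig)$, so the shift $\partial_\rd H_N(\bsig)=pH_N(\bsig)/N$ appearing in \eqref{eq:riemannian-hessian} (cf. \eqref{eq:radial-deriv-propto}) carries no randomness beyond the value $H_N(\bsig)$. Differentiating the covariance \eqref{eq:def-xi} twice in each argument and specializing to a coinciding point shows that, for $i,j,k,\ell\in\cT$,
\[
\bbE\big[\big(\nabla^2_{\cT\times\cT}H_N(\bsig)\big)_{ij}\big(\nabla^2_{\cT\times\cT}H_N(\bsig)\big)_{k\ell}\big]=\frac{p(p-1)}{N}\big(\delta_{ik}\delta_{j\ell}+\delta_{i\ell}\delta_{jk}\big),
\]
since every term involving the coordinates of $\bsig$ in a tangential orthonormal frame $\{e_2(\bsig),\dots,e_N(\bsig)\}$ vanishes. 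Thus $M_\bsig:=\sqrt{N/(p(p-1))}\,\nabla^2_{\cT\times\cT}H_N(\bsig)$ is a $\GOE_{N-1}$ matrix (spectral edges at $\pm 2\sqrt N$), and from \eqref{eq:riemannian-hessian} we get the identity $\lambda_j\big(\nabla^2_\sph H_N(\bsig)\big)=\sqrt{p(p-1)/N}\,\lambda_j(M_\bsig)-pH_N(\bsig)/N$. Writing $\gamma'=\gamma/(2\sqrt{p(p-1)})$, the proposition is therefore equivalent to the purely random-matrix claim: with probability $1-e^{-cN}$, for all $\bsig\in\cS_N$ and all $k\le j\le\delta N$, $\lambda_j(M_\bsig)\in\big[(2-\gamma')\sqrt N,(2+\gamma')\sqrt N\big]$.

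\emph{Single-point estimates.} For one $\GOE_{N-1}$ matrix $M$, two facts suffice. (a) The $k$-th largest eigenvalue obeys $\bbP\big[\lambda_k(M)>(2+\gamma'/2)\sqrt N\big]\le e^{-c(\gamma')\,kN}$: this is a standard large-deviation bound, whose exponential rate scales linearly in $k$ because pushing $k$ charges of the Dyson gas out to level $2+\gamma'/2$ costs of order $k$ times the cost of pushing out one (the joint rate function for the top eigenvalues is additive). (b) Choosing $\delta=\delta(\gamma')>0$ to be, say, half of $\mu_{\mathrm{sc}}\big([2-\gamma'/2,2]\big)$, one has $\bbP\big[\lambda_{\delta N}(M)<(2-\gamma'/2)\sqrt N\big]\le e^{-c(\gamma')N^2}$, since this event forces a macroscopic fraction of the spectrum below a level that the semicircle law keeps it above; this follows from the Ben Arous--Guionnet large deviation principle for the empirical spectral distribution, or more elementarily from Gaussian concentration of a Lipschitz approximation to the eigenvalue counting function (which has mean of order $N$). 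Together, (a) and (b) pin $\lambda_j(M)$ into $\big[(2-\gamma'/2)\sqrt N,(2+\gamma'/2)\sqrt N\big]$ for all $k\le j\le\delta N$.

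\emph{Uniformization, and the main obstacle.} The hard part is to make the previous bound hold simultaneously for all $\bsig\in\cS_N$. A net-plus-union-bound closes only if the single-point failure probability beats the cardinality of the net, and this forces two quantitative inputs. First, $\bsig\mapsto M_\bsig$ must be Lipschitz with an \emph{absolute} constant, so that a net $\cN\subseteq\cS_N$ of constant relative precision --- hence of cardinality $e^{C(\gamma')N}$ --- already resolves it; this follows from the sharp regularity estimate $\sup_{\bsig\in\cS_N}\tnorm{\nabla^3 H_N(\bsig)}_{\op}=O(N^{-1/2})$ (a standard Gaussian-process bound via Borell--TIS and Dudley's inequality, which works here because $\nabla^3 H_N$ has entrywise variance $\Theta(N^{-2})$), combined with the $O(N^{-1/2})$-Lipschitz dependence of the tangential projectors $P_{\cT(\bsig)}$ on $\bsig$ and the bound $\tnorm{\nabla^2 H_N(\bsig)}_{\op}=O(1)$. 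Second, the outlier count must be a large \emph{constant} $k=k(\gamma)$, chosen so that $e^{C(\gamma')N}\cdot e^{-c(\gamma')kN}\to 0$ --- which is precisely why the statement discards the top $O(1)$ eigenvalues (a few points of $\cS_N$ may carry a handful of Hessian outliers above the window, but not $k$ of them). Granting these, one union-bounds (a) and (b) over $\cN$ --- the $e^{-c(\gamma')N^2}$ tail is trivially summable, and the choice of $k$ makes the $e^{-c(\gamma')kN}$ tail summable against $|\cN|$ --- and then transfers the two-sided control from $\cN$ to all of $\cS_N$ via the absolute Lipschitz bound, at the cost of relaxing $\gamma'/2$ to $\gamma'$. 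Undoing the reduction of the first step yields the proposition; I expect establishing the sharp $O(N^{-1/2})$ uniform regularity and the linear-in-$k$ large-deviation rate to be the two points requiring the most care.
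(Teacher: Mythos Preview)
The paper does not supply its own proof of this proposition: it is imported wholesale from \cite[Lemma~3]{subag2018following} and used as a black box. So there is no in-paper argument to compare against.

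That said, your sketch is a correct outline of the standard proof, and is essentially how the cited reference proceeds: identify $\nabla^2_{\cT\times\cT}H_N(\bsig)$ as a scaled $\GOE_{N-1}$ matrix, invoke pointwise large-deviation bounds for the top $k$ eigenvalues (rate $\asymp kN$) and for the empirical spectral measure (rate $\asymp N^2$), and then uniformize over $\cS_N$ via an $\eps$-net of cardinality $e^{O(N)}$ together with the $O(N^{-1/2})$-Lipschitz dependence of $\bsig\mapsto\nabla^2 H_N(\bsig)$ in operator norm (controlled by the third-derivative bound in Proposition~\ref{prop:gradients-bounded}). Your identification of the key trade-off---that $k=k(\gamma)$ outliers must be discarded precisely so that the single-point rate $c(\gamma')kN$ dominates the net entropy $C(\gamma')N$---is exactly the mechanism behind the statement. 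One minor remark: your reduction uses $-pH_N(\bsig)/N$, consistent with \eqref{eq:riemannian-hessian}--\eqref{eq:radial-deriv-propto}, whereas the displayed statement reads $-pH_N(\bsig)$; this looks like a normalization slip in the paper rather than an issue with your argument.
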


Below, we define \emph{wells} of $H_N$ to be approximate stationary points where the Hessian is locally strictly concave, except for a constant number of outlier eigenvalues.
In fact thanks to Proposition~\ref{prop:wells-energy-for-pure}, for the latter property it is equivalent to lower-bound the radial derivative.
In the next definition, one may view $\gamma>0$ as a fairly small constant, and $\delta>0$ as an extremely small constant (still independent of $N$).

\begin{definition}
\label{def:well}
For $\gamma,\delta>0$ we say $\bsig\in\cS_N$ is a \textbf{$(\gamma,\delta)$-well} for the $p$-spin Hamiltonian $H_N$ if 
\[
\|\nabla_{\sph} H_N(\bsig)\|< \delta\sqrt{N},\quad\quad \text{and}\quad\quad \partial_\rd H_N(\bsig) - 2\sqrt{p(p-1)} >\gamma>0
\]
We let $W(\gamma,\delta)=W(\gamma,\delta;H_N)\subseteq\cS_N$ be the set of such points.
\end{definition}

It is immediate from \eqref{eq:radial-deriv-propto} that finding wells is harder than exceeding energy $\ALG$.
In fact these problems are essentially equivalent, because if one can exceed $\ALG$, one can then run gradient ascent to improve the energy until an approximate stationary point is reached.
Further, this latter step preserves stability of any algorithm.
This is encapsulated in the next proposition, whose routine proof is deferred to Section~\ref{sec:deferred-proofs}.

\begin{proposition}
\label{prop:GD-finds-well}
For any $\delta$ and $E>\ALG(p)$ there are $(I,\eta)$ depending on $(\delta,E)$ and $\gamma\geq \Omega(E-\ALG(p))$ independent of $\delta$ such that the following holds.
Suppose $H_N$ satisfies Proposition~\ref{prop:gradients-bounded} and $H_N(\bsig)/N\geq E>\ALG(p)$.
Let $\bsig=\bsig^{(0)},\bsig^{(1)},\dots$ be spherical gradient ascent iterates taking the form
\[
\bsig^{(i+1)}
=
\frac{\bsig^{(i)}+\eta\nabla_{\sph} H_N(\bsig^{(i)}}{\|\bsig^{(i)}+\eta\nabla_{\sph} H_N(\bsig^{(i)}\|}\cdot \sqrt{N}.
\]
Then:
\begin{enumerate}
    \item At least one of $\bsig^{(1)},\dots,\bsig^{(I)}$ lies in $W(\gamma,\delta)$.
    \item If $\bsig=\cA_N(H_N)$ is the output of a asymptotically $S$-stable algorithm, then each $\bsig^{(k)}$ is the output of an asymptotically $S'$-stable algorithm, for $S'=S'(S,I,\eta)$ independent of $N$.
\end{enumerate}
\end{proposition}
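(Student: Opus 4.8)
\emph{Proof plan.} The plan is to establish the two claims separately: (1) will follow from a deterministic analysis of spherical gradient ascent on the high-probability event of Proposition~\ref{prop:gradients-bounded}, and (2) will follow by propagating the stability of $\cA_N$ through the ascent iteration, viewed as a Lipschitz function of the pair $(H_N,\bsig)$.

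For (1), I would work on the event of Proposition~\ref{prop:gradients-bounded}, which I take to also include a uniform bound $\sup_{\bsig\in\cS_N}|H_N(\bsig)|/N\le E_{\max}$. Since $H_N$ is $p$-homogeneous, radially projecting $\bsig$ onto $\cS_N$ does not decrease $H_N/N$, and together with $H_N(\bsig)/N\ge E$ this forces $\|\bsig\|/\sqrt N\ge (E/E_{\max})^{1/p}=:c_0>0$; so I may assume $\bsig^{(0)}\in\cS_N$, the projection being $O(1/c_0)$-Lipschitz near $\bsig$ (this regularized projection will be reused in (2)). The core estimate is that each ascent step raises the energy: combining the identity $\la\nabla H_N(\bsig),\nabla_\sph H_N(\bsig)\ra=\|\nabla_\sph H_N(\bsig)\|^2$, a second-order Taylor expansion of $H_N$ along the segment $\bsig^{(i)}+t\eta\nabla_\sph H_N(\bsig^{(i)})$ controlled by the uniform Hessian bound, and the fact that the retraction multiplies by $(1+\eta^2\|\nabla_\sph H_N(\bsig^{(i)})\|^2/N)^{-p/2}\ge 1-\tfrac p2\eta^2\|\nabla_\sph H_N(\bsig^{(i)})\|^2/N$ (negligible because $H_N(\bsig^{(i)})/N\le E_{\max}$), one gets, for $\eta\le\eta_0(p,E_{\max},C)$,
\[
\frac{H_N(\bsig^{(i+1)})}{N}\ \ge\ \frac{H_N(\bsig^{(i)})}{N}\ +\ \frac{\eta}{2}\cdot\frac{\|\nabla_\sph H_N(\bsig^{(i)})\|^2}{N}.
\]
Hence the energy is non-decreasing (so always $\ge E$) and increases by at least $\tfrac{\eta}{2}\delta^2$ at every step with $\|\nabla_\sph H_N(\bsig^{(i)})\|\ge\delta\sqrt N$; since $H_N/N\le E_{\max}$ there can be at most $I-1:=\lceil 2(E_{\max}-E)/(\eta\delta^2)\rceil$ such steps, so some $\bsig^{(i)}$ with $1\le i\le I$ has $\|\nabla_\sph H_N(\bsig^{(i)})\|<\delta\sqrt N$. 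For that iterate, \eqref{eq:radial-deriv-propto} gives $\partial_\rd H_N(\bsig^{(i)})=pH_N(\bsig^{(i)})/N\ge pE>p\,\ALG(p)=2\sqrt{p(p-1)}$, so $\partial_\rd H_N(\bsig^{(i)})-2\sqrt{p(p-1)}\ge p(E-\ALG(p))>\gamma:=\tfrac p2(E-\ALG(p))$ and $\bsig^{(i)}\in W(\gamma,\delta)$. This gives (1), with $\gamma=\Omega(E-\ALG(p))$ independent of $\delta$ and $I,\eta$ depending only on $(\delta,E)$.

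For (2), fix $k\le I$ and let $\Phi_{H_N}\colon\cS_N\to\cS_N$ denote one ascent step, so that — after folding in the regularized radial projection of the initial point — $\bsig^{(k)}(H_N,\omega)=\Phi_{H_N}^{\circ k}(\cA_N(H_N,\omega))$ is a measurable $\cB_N$-valued algorithm. On the good event of Proposition~\ref{prop:gradients-bounded} (applied to the pair $H_N,H_{N,1-\eps}$ and to the independent copy defining it), the Hessian bound makes $\Phi_{H_N}$ $(1+c_1\eta)$-Lipschitz in $\bsig$, while $\|\Phi_{H_N}(\bsig)-\Phi_{\wt H_N}(\bsig)\|\le c_2\eta\,\|\nabla_\sph H_N(\bsig)-\nabla_\sph\wt H_N(\bsig)\|$ for each fixed $\bsig$; unrolling the triangle inequality over the $k\le I$ steps gives, on that event,
\[
\|\bsig^{(k)}(H_N)-\bsig^{(k)}(\wt H_N)\|\ \le\ L_I\Big(\|\cA_N(H_N)-\cA_N(\wt H_N)\|\ +\ \sup_{\bsig\in\cS_N}\|\nabla_\sph H_N(\bsig)-\nabla_\sph\wt H_N(\bsig)\|\Big).
\]
I would then set $\wt H_N=H_{N,1-\eps}$, square using $(a+b)^2\le 2a^2+2b^2$, divide by $N$, and take expectations, handling the complement of the good event (probability $\le e^{-cN}$, where $\|\bsig^{(k)}(H_N)-\bsig^{(k)}(H_{N,1-\eps})\|^2/N\le 4$ trivially) separately. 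The first term contributes $\le 2L_I^2 S\eps$ for $N\ge N(\eps)$ by asymptotic $S$-stability of $(\cA_N)$; writing $H_{N,1-\eps}-H_N=-\eps H_N+\sqrt{2\eps-\eps^2}\,H_N'$, the second term is $\le c_3\eps$ on the good event, and its bad-event contribution is $\le\poly(N)e^{-cN/2}\le\eps$ for $N\ge N(\eps)$ by Cauchy--Schwarz together with a crude $\poly(N)$ bound on $\bbE[\sup_\bsig\|\nabla H_N(\bsig)\|^4]$ via the injective norm of the coefficient tensor. Altogether $\bbE[\|\bsig^{(k)}(H_N)-\bsig^{(k)}(H_{N,1-\eps})\|^2/N]\le S'\eps$ with $S'=2L_I^2(S+c_3)+1$, uniformly over $k\le I$, which is asymptotic $S'$-stability.

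\emph{Main obstacle.} Claim (1) is routine gradient-ascent bookkeeping, the only subtlety being the retraction, which homogeneity dispatches cleanly. The real work is in (2): one must propagate the stability of $\cA_N$ through the map $H_N\mapsto\bsig^{(k)}(H_N)$, which depends on $H_N$ both through the starting point $\cA_N(H_N)$ \emph{and} through the ascent dynamics, keeping the entire pipeline globally Lipschitz — in particular absorbing the radial projection forced by $\cA_N$ being $\cB_N$-valued rather than $\cS_N$-valued — while dominating the $e^{-cN}$-probability events where the uniform gradient and Hessian bounds fail.
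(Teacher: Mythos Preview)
Your proposal is correct and follows essentially the same route as the paper: both parts are handled by the same ascent-energy-increase inequality for (1) and the same discrete Gronwall/Lipschitz propagation for (2), with the bad event absorbed by its $e^{-cN}$ probability. If anything, you are slightly more careful than the paper in two places --- you explicitly treat the initial radial projection from $\cB_N$ to $\cS_N$, and you verify the well condition directly from \eqref{eq:radial-deriv-propto} and Definition~\ref{def:well} rather than invoking Proposition~\ref{prop:wells-energy-for-pure} --- but these are cosmetic differences, not alternative arguments.
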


The main technical thrust of our argument is the following result, which transforms any low-degree polynomial algorithm which finds wells into a Lipschitz algorithm that does the same.
Since it is known from \cite{huang2021tight,huang2023algorithmic} that Lipschitz algorithms cannot exceed $\ALG(p)$, this together with Proposition~\ref{prop:GD-finds-well} will imply Theorem~\ref{thm:main}.
We emphasize that while Propositions~\ref{prop:wells-energy-for-pure} and \ref{prop:GD-finds-well} are specific to pure $p$-spin models, Theorem~\ref{thm:well-reduction} extends verbatim to mixed $p$-spin models, and should in fact hold quite generically for random smooth landscapes on high-dimensional manifolds.

\begin{theorem}
\label{thm:well-reduction}
Fix $(\gamma,\delta,S,\eta)$, where $\delta\in (0,\delta_0(\gamma))$ is sufficiently small depending on $\gamma$.
Suppose the asymptotically $S$-stable algorithms $(\cA_{N})_{N\geq 1}$ satisfy
\[
\limsup_{N\to\infty}
\bbP[\cA_{N}(H_{N})\in W(\gamma,\delta;H_{N})]\geq \eta>0.
\]
Then for some $L=L(\gamma,\delta,S,\eta)<\infty$, there exists a sequence of $L$-Lipschitz algorithms $\cA^{\Lip}_{N}$ such that 
\[
\limsup_{N\to\infty}
\bbP[\cA^{\Lip}_{N}(H_{N})\in W(\gamma/3,\delta^{1/4};H_{N})]>0.
\]
\end{theorem}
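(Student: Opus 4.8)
The plan is to build the Lipschitz algorithm by a "state-following" (homotopy) construction: start from a fresh independent Hamiltonian $H_N'$ on which $\cA_N$ outputs a well, and transport that well continuously along an interpolating path of Hamiltonians from $H_N'$ to $H_N$. The key structural observation to exploit is that wells are almost rigid under small perturbations of the Hamiltonian. Indeed, at a $(\gamma,\delta)$-well $\bsig$, Proposition~\ref{prop:wells-energy-for-pure} (with $\gamma$ in place of its parameter) shows that, except for $O_\gamma(1)$ outlier directions, the Riemannian Hessian $\nabla^2_\sph H_N(\bsig)$ has all eigenvalues below $-\gamma/2$; combined with $\|\nabla_\sph H_N(\bsig)\|<\delta\sqrt N$, the implicit function theorem on the sphere says that after an $\eps$-correlated perturbation $H_N\to H_{N,1-\eps}$, there is a nearby approximate stationary point, and the displacement is controlled — up to the low-dimensional outlier subspace — by $\|\nabla_\sph(H_{N,1-\eps}-H_N)\|/\gamma$, which is $O(\sqrt{\eps N}/\gamma)$. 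So if we discretize a path from $H_N'$ to $H_N$ into $\asymp 1/\eps$ steps and, at each step, move to the nearest well, the cumulative drift is controlled and the procedure is $O_\gamma(1)$-Lipschitz in the terminal Hamiltonian provided we handle the outlier subspaces.

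The steps, in order, would be: \textbf{(1)} Fix a reference Hamiltonian $H_N'$; condition on the event (of probability $\geq \eta-o(1)$) that $\cA_N(H_N')\in W(\gamma,\delta;H_N')$, and let $\bsig_0$ be that output. Since $\cA_N$ is $S$-stable and $H_N'$ is independent of $H_N$, the pair $(H_N',\bsig_0)$ is a legitimate piece of "advice" we are allowed to hardwire into a Lipschitz algorithm (advice depending only on an independent source is permitted, cf. the use of $\Omega_N$ in the definition of algorithm). \textbf{(2)} Given the true input $H_N$, form the linear interpolation $H_N^{(t)} = \cos(t) H_N' + \sin(t) H_N''$ rescaled appropriately, or more simply the Ornstein–Uhlenbeck bridge, so that $H_N^{(0)}=H_N'$, $H_N^{(1)}=H_N$, and consecutive Hamiltonians at spacing $\eps$ are $(1-O(\eps))$-correlated; the map $H_N\mapsto H_N^{(t)}$ is $1$-Lipschitz for each $t$. \textbf{(3)} Iteratively define $\bsig_{k+1}$ to be the output of a few steps of Riemannian gradient ascent from $\bsig_k$ applied to $H_N^{((k+1)\eps)}$, which by the Hessian concavity (again Proposition~\ref{prop:wells-energy-for-pure}) converges to the unique nearby well in the complement of the outlier subspace; show $\|\bsig_{k+1}-\bsig_k\|\leq C(\gamma)\sqrt{\eps N}$. \textbf{(4)} Output $\bsig_{1/\eps}$; argue it lies in $W(\gamma/3,\delta^{1/4};H_N)$ with positive probability by accumulating the per-step energy/gradient errors (each $O(\gamma\eps)$ or so in the relevant normalization) over the $1/\eps$ steps and taking $\eps$ small. \textbf{(5)} Verify the composed map $H_N\mapsto\bsig_{1/\eps}$ is $L(\gamma,\delta,S,\eta)$-Lipschitz by composing the Lipschitz bounds from (2)–(3); smooth out the gradient-ascent-to-convergence map if needed (e.g. fixed finite iteration count with a Lipschitz projection).

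\textbf{The main obstacle} is the outlier eigenvalues of the Riemannian Hessian. Near a well, the Hessian is strictly concave only transverse to an $O_\gamma(1)$-dimensional subspace where eigenvalues can be positive or near zero, so the well is \emph{not} a strict local maximum and the nearby well after a perturbation is not uniquely pinned down — there is a genuine constant-dimensional ambiguity in where the transported point lands, exactly the freedom that a general stable (e.g. low-degree) algorithm could exploit but that we must tame to get a single-valued Lipschitz map. To handle this I would track, alongside $\bsig_k$, the outlier eigenspace $V_k\subset\bbR^N$ (say the span of eigenvectors with eigenvalue $>-\gamma/4$), show it varies continuously in the Hamiltonian (Davis–Kahan / perturbation theory for the $O_\gamma(1)$ top eigenvalues, using the spectral gap from Proposition~\ref{prop:wells-energy-for-pure}), and at each step move $\bsig_k$ only within the gradient-ascent flow, which automatically stays in a controlled neighborhood; the point is that the flow, while having a center manifold, still contracts toward the well-set, and the diameter of the well-set in a small ball is itself $O_\gamma(\sqrt{\eps N})$ per step because the gradient must vanish there. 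Making this quantitative — ruling out that the transported point "slides along" the center manifold by more than $O(\sqrt{\eps N})$ per step, uniformly in $\bsig$ and over all $1/\eps$ steps — is where the real work lies, and is presumably the content of Subsection~\ref{subsec:overview} and the technical sections to follow.
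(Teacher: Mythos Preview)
Your overall architecture---start from an independent Hamiltonian, interpolate via an Ornstein--Uhlenbeck path, and follow the moving well---matches the paper. The gap is in how you propose to handle the outlier subspace. You suggest running gradient ascent and arguing that the flow ``still contracts toward the well-set, and the diameter of the well-set in a small ball is itself $O_\gamma(\sqrt{\eps N})$.'' This is not true: if $\nabla_{\sph}^2 H_N(\bsig)$ has an eigenvalue of size $\lambda\approx 0$ in some direction $v$, then moving by $tv$ changes the gradient by only $O(|\lambda t|+t^2/\sqrt N)$, so the set $\{\|\nabla_{\sph} H_N\|<\delta\sqrt N\}$ can extend by $\Theta(\sqrt N)$ along $v$. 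There is no canonical nearby well to contract to, gradient ascent can drift along (or blow up in) these directions, and the center manifold need not be small. You flag this as ``where the real work lies,'' but the mechanism you sketch does not resolve it.

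The paper's resolution uses two ingredients you are missing. First, stability of $\cA_N$ is used not only at step~(1) but along the \emph{entire} path: via Lemma~\ref{lem:success-and-stability}, with positive probability the event $\Sall(K;\cA_N)$ holds, meaning $\cA_N(H_N^{(k)})$ is itself a well for every $k$ and $\|\cA_N(H_N^{(k)})-\cA_N(H_N^{(k+1)})\|\leq\delta\sqrt N$. This identifies, at each step, a specific ``correct'' displacement $u^{(k)}$ in the $d$-dimensional outlier subspace $U_{\iota,k}$ (namely the projection of $T_{\bsig^{(k)}}^{-1}(\cA_N(H_N^{(k+1)}))$). Second, since this displacement cannot be computed in a Lipschitz way, the algorithm \emph{guesses} it: independent uniform vectors $\wt u^{(0)},\dots,\wt u^{(K-1)}$ in the $d$-dimensional ball are part of the auxiliary randomness $\omega$, and because $d=O_\gamma(1)$ is dimension-free, the probability that all $K$ guesses land within $\rho\sqrt N$ of the correct values is bounded below independently of $N$. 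A pigeonhole step (Proposition~\ref{prop:well-types}) first fixes $(d,\iota)$ so that this subspace is well-defined and stable under Davis--Kahan. Finally, the resulting map is only locally Lipschitz on the domain where state following is well-posed; the paper extends it globally by multiplying the output by a scalar $a^*\in[0,1]$ that is itself a Lipschitz function of the input and vanishes exactly where state following breaks down (Proposition~\ref{prop:rescaled-global-lipschitz}). Your ``smooth out if needed'' does not address this, and it is a genuine issue since the outlier eigenspace can degenerate.
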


Next we deduce Theorem~\ref{thm:main} on hardness for stable algorithms from Theorem~\ref{thm:well-reduction}.
Hence our main focus in the rest of the paper will be to establish the latter.

\begin{proof}[Proof of Theorem~\ref{thm:main}]
    Suppose for sake of contradiction that for some $E>\ALG(p)$ and asymptotically $S$-stable algorithms $\cA_N$, we have
    \[
    \limsup_{N\to\infty}
    \bbP[H_{N}(\cA_{N}(H_{N}))/N\geq E]>0.
    \]
    Using Proposition~\ref{prop:GD-finds-well} and using the pigeonhole principle to choose $i\in [I]$, there exists an asymptotically $S'$-stable algorithm such that along a further subsequence, we have for some $\gamma,\delta>0$:
    \[
     \limsup_{N\to\infty}
    \bbP[\cA_{N}(H_{N})\in W(\gamma,\delta;H_N)]>0.
    \]
    Then Theorem~\ref{thm:well-reduction} guarantees the existence of $O(1)$-Lipschitz algorithms $\cA_N^{\Lip}$ such that 
    \[
    \limsup_{N\to\infty}
    \bbP[\cA_N^{\Lip}(H_N)\in W(\gamma/3,\delta^{1/4};H_N)]>0,
    \]
    which directly implies 
    \[
    \limsup_{N\to\infty}
    \bbP[H_N(\cA_N^{\Lip}(H_N))/N\geq \ALG(p)+\Omega(\gamma)]>0.
    \]
    This contradicts the Lipschitz hardness stated in Proposition~\ref{prop:BOGP-input}, completing the proof.
\end{proof}

\subsection{Overview of State Following Reduction}
\label{subsec:overview}

Here we outline the state-following construction used to convert a stable well-finding algorithm to a Lipschitz one in Theorem~\ref{thm:well-reduction}.
Let us first suppose that the wells in question have no outlier eigenvalues, which simplifies the proof significantly.
Choose $\eps>0$ small depending on the parameters $(\gamma,\delta,\eta,D)$ in Theorem~\ref{thm:well-reduction} and $K\gg 1/\eps$.
Let $H_N^{(0)},\dots,H_N^{(K)}$ be a discrete-time Ornstein--Uhlenbeck chain of $p$-spin Hamiltonians, which is jointly centered Gaussian with $H_N^{(i)},H_N^{(j)}$ a $(1-\eps)^{|i-j|}$-correlated pair for each $i,j$.
(I.e. conditionally on $H_N^{(0)},\dots, H_N^{(i)}$ the next Hamiltonian $H_N^{(i+1)}$ is $(1-\eps)$-correlated with $H_N^{(i)}$, and conditionally independent of the past given $H_N^{(i)}$.)
Calling the law of this ensemble $\cL_N(p,K,\eps)$, we can then proceed according to the following steps.

\begin{enumerate}
    \item 
    Assuming $\cA_N$ is stable and finds a well with constant probability, one first shows there is a constant probability for $\cA_N(H_N^{(i)})\in W(\gamma,\delta;H_N^{(i)})$ to hold for all $0\leq i\leq K$ simultaneously, and for $\|\cA_N(H_N^{(i)})-\cA_N(H_N^{(i+1)})\|$ to be small for each $i$.
    (See Lemma~\ref{lem:success-and-stability}, which is taken from \cite{huang2025strong}.)
    This event (together with crude high-probability norm estimates on the various disorder tensors) will be denoted $\Sall(K)$.
    \item 
    Since $H_N^{(i)},H_N^{(i+1)}$ are very similar, the event $\Sall(K)$ induces an essentially unique choice of $\cA_N(H_N^{(i+1)})$ from $\cA_N(H_N^{(i)})$ in each step.
    This is because $H_N^{(i)}$ is uniformly concave around the local maximum $\cA_N(H_N^{(i)})$, so the slight perturbation $H_N^{(i+1)}$ of $H_N^{(i)}$ must have a unique local optimum near $\cA_N(H_N^{(i)})$.
    Hence on the event $\Sall(K)$, the output $\cA_N(H_N^{(K)})$ can be recovered by a ``state following'' algorithm which starts from $(H_N^{(0)},\cA_N(H_N^{(0)}))$ and iteratively uses gradient descent to ``follow the moving well''.
    \item 
    This procedure can be turned into a Lipschitz algorithm $\cA^{\Lip}$, because it works even when $H_N^{(0)}$ and $H_N^{(K)}$ are approximately independent of each other.
    More precisely, given an input Hamiltonian $H_N$ to be optimized, we choose $H_N^{(0)}$ to be an IID copy and let 
    \[
    H_N^{(K)}
    =
    (1-\eps)^K H_N^{(0)}+\sqrt{1-(1-\eps)^{2K}}H_N.
    \]
    Then we can sample the intermediate Hamiltonians $H_N^{(1)},\dots,H_N^{(K-1)}$ from their conditional law under $\cL_N(p,K,\eps)$ given the endpoints just constructed.
    Finally we perform state following to find a well $\bsig_*$ of $H_N^{(K)}$.
    For large $K\gg 1/\eps$, the point $\bsig_*$ will also be a well of $H_N\approx H_N^{(K)}$.
    \item 
    The previous algorithm is Lipschitz in $H_N$ in the following sense.
    Since $(H_N^{(0)},\dots,H_N^{(K)},H_N)\in\bbR^{N^k (K+2)}$ is jointly Gaussian, it is the sum of a linear function of $H_N$ and an independent centered Gaussian vector.
    We let the auxiliary random variable $\omega$ consist of this latter vector, together with $\bsig^{(0)}=\cA_N(H_N^{(0)})$.
    We view state following as a function of $(H_N,\omega)$; it is $O(1)$-Lipschitz because it can be simulated by using gradient descent with (small) dimension-free step size and a (large) dimension-free number of iterations.
\end{enumerate}

The presence of Hessian outliers (i.e. $k(\gamma)>1$ in Proposition~\ref{prop:wells-energy-for-pure}) complicates the above strategy.
In particular, the ``moving well'' can slide unpredictably along near-zero eigenspaces.
The main strategy to deal with this is to exhaust all possible sequences of movements along these $O(1)$-dimensional eigenspaces.

Another complication is that in the presence of Hessian outliers, gradient descent no longer suffices for state following; one can at best use restricted local concavity in a well-conditioned subspace.
Due to the explicit dependence on the evolving outlier eigenspace, this yields only a \emph{locally} Lipschitz partial function $\cA_N^{\LocLip}$ that is not always defined.
To construct a \emph{globally} Lipschitz algorithm, we rescale the output of state following toward $0\in\bbR^N$ by a scalar $a^*\in [0,1]$.
We will arrange that $a^*$ continuously measures the degradation of the conditions needed for state following, with $a^*=0$ whenever state following becomes impossible.
Then $\cA_N^{\LocLip}$ is defined whenever $a^*\neq 0$, so this yields a globally Lipschitz extension $\cA_N^{\Lip}$ to which Proposition~\ref{prop:BOGP-input} can be applied.

\section{Technical Preliminaries}


Given an linear subspace $V\subseteq\bbR^N$, we let $P_V$ denote the corresponding orthogonal projection matrix which is the identity on $V$.
We will also denote $P_V^{\perp}=P_{V^{\perp}}$.
For a non-zero vector $\bsig\in\bbR^N$, we let $P_{\bsig}=P_{\bbR \bsig}$ be the projection onto the $1$-dimensional span of $\bsig$.

By default, we metrize Hamiltonians using the un-normalized Euclidean distance on their coefficients, which are denoted $\bg(H_N)$:
\begin{equation}
    \label{eq:hamiltonian-norm}
    \norm{H_N - H_N'} = 
    \norm{\bg(H_N) - \bg(H_N')}_2.
\end{equation}
For a tensor $\bA \in (\bbR^N)^{\otimes k}$, we also use the operator norm
\[
    \tnorm{\bA}_\op =
    \max_{\|\bsig^1\|_2,\ldots,\|\bsig^k\|_2\leq 1}
    |\la \bA, \bsig^1 \otimes \cdots \otimes \bsig^k \ra|\,.
\]
Although we will not directly use it in this paper, an equivalent definition of $H_N$ is the continuous centered Gaussian process on $\bbR^N$ with covariance
\begin{equation}
    \label{eq:def-xi}
\bbE[H_N(\bsig)H_N(\brho)]
=
N
\Big(\frac{\la \bsig,\brho\ra}{N}\Big)^p.
\end{equation}

\subsection{Smoothness Estimates}

The next standard result ensures that $H_N$ is a smooth function.

\begin{proposition}[{\cite[Corollary 59]{arous2020geometry}}]
    \label{prop:gradients-bounded}
    For any $p$, there exists $C_{\ref{prop:gradients-bounded}},c>0$ independent of $N$ such that the following holds for all $N$ sufficiently large.
    Defining the open convex set
    \[
    K_N=\lt\{H_N\in\sH_N~:~\norm{\nabla^k H_N(\bsig)}_{\op} < C_{\ref{prop:gradients-bounded}} N^{1-\frac{k}{2}}~~~~\forall\, 0\leq k\leq p,\norm{\bsig}_2 \le \sqrt{N}\rt\}\subseteq \sH_N,
    \]
    we have $\bbP[H_N \in K_N] \ge 1-e^{-cN}$.
\end{proposition}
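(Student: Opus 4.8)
The plan is to exploit homogeneity to reduce the uniform‑in‑$\bsig$ operator‑norm bounds for all orders $0\le k\le p$ to the single random quantity $\norm{\Sym(\bG_N)}_\op$, and then to control the latter by the classical net argument for Gaussian tensors. First I would record that, because $H_N$ is homogeneous of degree $p$ in $\bsig$, the tensor‑valued function $\bsig\mapsto\nabla^k H_N(\bsig)$ is homogeneous of degree $p-k$; in particular $\nabla^k H_N(\bzero)=0$ for every $k<p$, while $\nabla^p H_N$ is constant in $\bsig$ and equals $\bT_N:=N^{-\frac{p-1}{2}}\Sym(\bG_N)$, where $\Sym$ sums over the $p!$ coordinate permutations. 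Applying the fundamental theorem of calculus along the segment $t\mapsto t\bsig$, and using $\nabla^k H_N(\bzero)=0$, gives $\nabla^k H_N(\bsig)=\int_0^1\nabla^{k+1}H_N(t\bsig)[\bsig]\,\de t$ (with $[\bsig]$ denoting contraction of one slot against $\bsig$), hence
\[
\sup_{\norm{\bsig}_2\le\sqrt N}\norm{\nabla^k H_N(\bsig)}_\op
\;\le\;
\sqrt N\,\sup_{\norm{\brho}_2\le\sqrt N}\norm{\nabla^{k+1}H_N(\brho)}_\op .
\]
Iterating this from $k=p$ down to $k=0$ yields $\sup_{\norm{\bsig}_2\le\sqrt N}\norm{\nabla^k H_N(\bsig)}_\op\le N^{\frac{p-k}{2}}\norm{\bT_N}_\op$ for all $0\le k\le p$ simultaneously. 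So it suffices to show $\norm{\Sym(\bG_N)}_\op\le C_p\sqrt N$ with probability $1-e^{-cN}$: on that event $\norm{\bT_N}_\op\le C_p N^{1-\frac p2}$, whence every one of the $p+1$ required estimates holds once $C_{\ref{prop:gradients-bounded}}$ is taken to be any fixed constant strictly larger than $C_p$.

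For the tensor norm bound I would fix a $\theta$‑net $\cN$ of $S^{N-1}$ with $\theta=\tfrac1{2p}$ and $|\cN|\le(6p)^N$, and use the standard discretization inequality $\norm{\bA}_\op\le 2\max_{\bv^1,\dots,\bv^p\in\cN}\bigl|\la\bA,\bv^1\otimes\cdots\otimes\bv^p\ra\bigr|$ for any $\bA\in(\bbR^N)^{\otimes p}$. For $\bA=\Sym(\bG_N)$ each such inner product is a centered Gaussian of variance $O_p(1)$, so a Gaussian tail bound at level $A\sqrt N$ together with a union bound over the $|\cN|^p\le(6p)^{pN}$ tuples gives, for $A=A(p)$ large enough, $\norm{\Sym(\bG_N)}_\op\le 2A\sqrt N$ outside an event of probability $e^{-cN}$ with $c=c(p)>0$. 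Combined with the previous step this establishes the probability estimate. Finally, $K_N$ is open and convex because, for each $k$, the map $H_N\mapsto\sup_{\norm{\bsig}_2\le\sqrt N}\norm{\nabla^k H_N(\bsig)}_\op$ is a finite, convex, continuous function of the coefficient vector $\bg(H_N)$ — a supremum of seminorms, finite since $\nabla^k H_N$ depends polynomially on $\bsig$ — and $K_N$ is the intersection over the finitely many $0\le k\le p$ of the corresponding open convex strict sublevel sets.

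Since the statement is quoted from \cite{arous2020geometry}, no real difficulty is expected here; the only point requiring a moment's thought is passing from a pointwise‑in‑$\bsig$ estimate to one uniform over $\cB_N$, and this is exactly what the homogeneity reduction trivializes, by collapsing the entire family of bounds onto the single well‑studied quantity $\norm{\Sym(\bG_N)}_\op$. Without homogeneity one would instead run a joint net over $\bsig$ together with the $p$ test directions and pay a short continuity (chaining) estimate controlled by $\nabla^{k+1}H_N$ — more laborious, but equally routine.
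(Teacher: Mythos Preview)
Your argument is correct. The paper does not actually prove this proposition: it is quoted verbatim as \cite[Corollary~59]{arous2020geometry} and used as a black box, so there is no ``paper's own proof'' to compare against. Your self-contained treatment --- collapsing all orders $k$ via homogeneity onto the single quantity $\norm{\Sym(\bG_N)}_\op$ and then bounding that by the standard $\theta$-net/union-bound argument --- is exactly the elementary route one would take to reprove the cited result, and every step checks out (the discretization constant $2$ at mesh $\theta=1/(2p)$, the $O_p(1)$ variance of the symmetrized inner products, and the openness/convexity of $K_N$ as a finite intersection of strict sublevel sets of continuous seminorms on the finite-dimensional coefficient space). The only thing the citation buys is brevity; your version has the advantage of being self-contained and of making transparent that the entire family of derivative bounds is governed by a single tensor operator norm.
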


\begin{proposition}[{\cite[Proposition A.2]{huang2021arxiv}}]
    \label{prop:gradstable}
    Let $K_N$ be given by Proposition~\ref{prop:gradients-bounded}.
    There exists $C_{\ref{prop:gradstable}}$ independent of $N$ such that for all $H_N, H'_N \in K_N$ and $\bx, \by \in \bbR^N$ with $\norm{\bx}, \norm{\by} \le 2\sqrt{N}$,
    \[
        \norm{\nabla^k H_N(\bx)-\nabla^k H_N'(\by)}_{\op}
        \le 
        C_{\ref{prop:gradstable}}
        N^{\frac{1-k}{2}}
        \lt(\norm{\bx-\by} + \norm{H_N-H_N'}\rt).
    \]
\end{proposition}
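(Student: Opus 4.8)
The plan is to split the estimate by the triangle inequality into a term where only the base point moves and a term where only the Hamiltonian moves, and control each by elementary calculus together with the homogeneity of $p$-spin Hamiltonians. Concretely, one writes
\[
\norm{\nabla^k H_N(\bx) - \nabla^k H_N'(\by)}_{\op}
\leq
\underbrace{\norm{\nabla^k H_N(\bx) - \nabla^k H_N(\by)}_{\op}}_{=:\, T_1}
\;+\;
\underbrace{\norm{\nabla^k H_N(\by) - \nabla^k H_N'(\by)}_{\op}}_{=:\, T_2},
\]
and one shows $T_1 \leq C N^{(1-k)/2}\norm{\bx-\by}$ and $T_2\leq C N^{(1-k)/2}\norm{H_N-H_N'}$ with $C$ depending only on $p$.

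For $T_2$, I would use that $Q:=H_N - H_N'$ is again a homogeneous degree-$p$ polynomial, with $Q(\bsig)=N^{-(p-1)/2}\la \bg(H_N)-\bg(H_N'),\bsig^{\otimes p}\ra$ by \eqref{eq:def-hamiltonian}. Differentiating $k$ times, $\nabla^k Q(\bsig)$ equals $N^{-(p-1)/2}$ times a combinatorial factor $c_{p,k}=\frac{p!}{(p-k)!}\leq p!$ times the $k$-tensor obtained by contracting the symmetrization of $\bg(H_N)-\bg(H_N')$ against $p-k$ copies of $\bsig$. Bounding the tensor operator norm by the Frobenius norm (symmetrization only decreases both norms, so non-symmetry of the coefficient tensor is harmless), and using $\norm{\by}\leq 2\sqrt N$ together with $-\tfrac{p-1}{2}+\tfrac{p-k}{2}=\tfrac{1-k}{2}$, gives
\[
T_2=\norm{\nabla^k Q(\by)}_{\op}
\leq
c_{p,k}\,2^{p-k}\,N^{\frac{1-k}{2}}\,\norm{\bg(H_N)-\bg(H_N')}_2
=
c_{p,k}\,2^{p-k}\,N^{\frac{1-k}{2}}\,\norm{H_N - H_N'},
\]
recalling the definition \eqref{eq:hamiltonian-norm} of the metric on Hamiltonians.

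For $T_1$, observe first that if $k=p$ the tensor $\nabla^p H_N$ is constant, so $T_1=0$; assume then $k\leq p-1$. The fundamental theorem of calculus along the segment $\by_t=\by+t(\bx-\by)$, $t\in[0,1]$, gives $\nabla^k H_N(\bx)-\nabla^k H_N(\by)=\int_0^1 \nabla^{k+1}H_N(\by_t)[\bx-\by]\,\diff t$, hence $T_1\leq \sup_{t\in[0,1]}\norm{\nabla^{k+1}H_N(\by_t)}_{\op}\,\norm{\bx-\by}$. Since $\norm{\by_t}\leq 2\sqrt N$ by convexity of the norm, and since $\nabla^{k+1}H_N$ is homogeneous of degree $p-k-1\geq 0$, the defining bound of $K_N$ in Proposition~\ref{prop:gradients-bounded} (valid on $\norm{\bsig}\leq\sqrt N$) extends to $\norm{\bsig}\leq 2\sqrt N$ up to a factor $2^{p-k-1}$; therefore $T_1\leq 2^{p-k-1}C_{\ref{prop:gradients-bounded}}\,N^{1-\frac{k+1}{2}}\norm{\bx-\by}=2^{p-k-1}C_{\ref{prop:gradients-bounded}}\,N^{\frac{1-k}{2}}\norm{\bx-\by}$. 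Adding the two bounds and setting $C_{\ref{prop:gradstable}}$ to be the maximum of the dimension-free constants obtained over $k\in\{0,\dots,p\}$ completes the proof.

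I do not expect a genuine obstacle: the argument is elementary and essentially mirrors the proof of Proposition~\ref{prop:gradients-bounded}. The only thing demanding care is the bookkeeping — tracking the normalization $N^{-(p-1)/2}$ and the factors $\norm{\bsig}^{p-k}$ through the derivative formulas so that the exponent of $N$ collapses to $\tfrac{1-k}{2}$ in both terms, remembering to treat $k=p$ separately (where the first term vanishes identically), and reconciling the radius-$\sqrt N$ statement of Proposition~\ref{prop:gradients-bounded} with the radius-$2\sqrt N$ hypothesis here by a single homogeneity rescaling.
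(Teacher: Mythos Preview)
Your argument is correct. The triangle-inequality split into a ``move the point'' term and a ``move the Hamiltonian'' term, followed by the mean-value bound for $T_1$ (using the $K_N$ hypothesis and homogeneity to pass from radius $\sqrt N$ to $2\sqrt N$) and the direct Frobenius-norm bound for $T_2$ (using the explicit polynomial form of $H_N-H_N'$), is exactly the standard route, and your bookkeeping on the powers of $N$ and the edge case $k=p$ is clean.

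Note, however, that the present paper does not actually supply a proof of this proposition: it is quoted verbatim as \cite[Proposition A.2]{huang2021arxiv} and used as a black box. So there is no ``paper's own proof'' to compare against here. Your write-up is precisely the kind of elementary argument one expects the cited reference to contain, and it stands on its own.
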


It will be convenient to parametrize movement within $\cS_N$ using the Riemannian exponential map, defined as follows.
Fix $\bsig\in\cS_N$ and $v\in\cS_N\cap \bsig^{\perp}$ and suppose $u=\theta v$ for $\theta$ satisfying the (somewhat arbitrary) bound $|\theta|<1$, i.e. $u\in \cB_N^{\circ}\cap \bsig^{\perp}$, for $\cB_N^{\circ}$ the interior of $\cB_N$.
Then the exponential map is defined by
\begin{equation}
\label{eq:exponential-map-def}
T_{\bsig}(u)
=
\cos(\theta)\bsig + \sin(\theta)v.
\end{equation}
It is well known that for any smooth function $H_N:\cS_N\to\bbR$ and $u\in\bsig^{\perp}$,
\begin{equation}
\label{eq:geodesic-derivatives}
    \frac{\de}{\de t} 
    H_N(T_{\bsig}(tu))\big|_{t=0}=\la\nabla_{\sph}H_N(\bsig),u\ra,
    \quad\quad 
    \frac{\de^2}{\de t^2} 
    H_N(T_{\bsig}(tu))\big|_{t=0}=\la \nabla_{\sph}^2 H_N(\bsig),u^{\otimes 2}\ra.
\end{equation}
Furthermore, $T_{\bsig}$ is a diffeomorphism on $u\in\bsig^{\perp}\cap\cB_N^{\circ}$.
We define the following functions on $\cS_N\times \cB_N$:
\begin{align*}
    F_1(\bsig,u)&=T_{\bsig}(P_{\bsig}^{\perp} u),
    \\
    F_2(\bsig,u)&=H_N(F_1(\bsig,u)).
\end{align*}

The next proposition (whose proof is deferred to Section~\ref{sec:deferred-proofs}) combines Proposition~\ref{prop:gradients-bounded} with smoothness properties of the exponential map.
Below, we define derivatives on $\cS_N\times \bbR^N$ using the product manifold structure.

\begin{proposition}
\label{prop:geodesics-smooth}
If $H_N\in K_N$, then for $(\bsig,u)\in \cS_N\times \cB_N^{\circ}$ and $0\leq k\leq 3$ and $i\in \{1,2\}$:
\[
    \|\nabla^k F_i(\bsig,u)\|_{\op}
    \leq 
    O(N^{\frac{1-k}{2}}).
\]
\end{proposition}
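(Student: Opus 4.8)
The plan is to handle $F_1$ via an explicit closed form and then $F_2$ by the chain rule, with all $N$-dependence coming from careful bookkeeping of the radius-$\sqrt N$ sphere. First I would record a formula for the exponential map. For $u\in\bsig^{\perp}$ with $\|u\|<\sqrt N$, writing $\theta=\|u\|/\sqrt N\in[0,1)$ and $v=\sqrt N\,u/\|u\|$, the definition \eqref{eq:exponential-map-def} gives $T_{\bsig}(u)=\cos\theta\,\bsig+\sin\theta\,v=\cos(\|u\|/\sqrt N)\,\bsig+\mathrm{sinc}(\|u\|/\sqrt N)\,u$, where $\mathrm{sinc}(x)=\sin(x)/x$ extends to an entire function with $\mathrm{sinc}(0)=1$. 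Hence
\[
F_1(\bsig,u)=\cos\!\big(\|P_{\bsig}^{\perp}u\|/\sqrt N\big)\,\bsig+\mathrm{sinc}\!\big(\|P_{\bsig}^{\perp}u\|/\sqrt N\big)\,P_{\bsig}^{\perp}u,
\]
which is manifestly smooth on $\cS_N\times\cB_N^{\circ}$ with no singularity where $P_{\bsig}^{\perp}u=0$ (the only potential trouble spot of the naive formula).

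Next I would bound the derivatives of $F_1$ by differentiating this formula using the product connection on $\cS_N\times\bbR^N$. There are three building blocks: the vector field $\bsig$ itself, of size $O(N^{1/2})$, with covariant derivatives in tangent directions of $\cS_N$ of size $O(1),O(N^{-1/2}),O(N^{-1})$ at orders $1,2,3$ (the powers of $N^{-1/2}$ coming from the second fundamental form of the radius-$\sqrt N$ sphere); the projected vector $P_{\bsig}^{\perp}u=u-N^{-1}\la\bsig,u\ra\bsig$, again of size $O(N^{1/2})$ with $\nabla^k$ of size $O(N^{(1-k)/2})$ by the same scaling, since each derivative hits either the flat variable $u$ or the sphere variable $\bsig$; and the scalar $\|P_{\bsig}^{\perp}u\|/\sqrt N\in[0,1)$, of size $O(1)$ with $\nabla^k$ of size $O(N^{-k/2})$, because the $N^{-1/2}$ normalization removes one power per derivative while the interior constraint $\|u\|<\sqrt N$ keeps this argument in a fixed compact subinterval of $[0,1)$, so all derivatives of $\cos$ and $\mathrm{sinc}$ at that point are $O(1)$. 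Plugging these into the product/chain rule and summing exponents yields $\|\nabla^k F_1(\bsig,u)\|_{\op}=O(N^{(1-k)/2})$ for $0\le k\le 3$, uniformly over $\cS_N\times\cB_N^{\circ}$; the $k=0$ case is simply $\|F_1(\bsig,u)\|=\sqrt N$.

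For $F_2=H_N\circ F_1$ I would apply Fa\`a di Bruno. Since $F_1(\bsig,u)\in\cS_N$, in particular $\|F_1(\bsig,u)\|\le\sqrt N$, so $H_N\in K_N$ supplies $\|\nabla^j H_N(F_1(\bsig,u))\|_{\op}<C_{\ref{prop:gradients-bounded}}N^{1-j/2}$ for all $0\le j\le p$, and $p\ge 3$ covers every $j\le k\le 3$. Each term of $\nabla^k F_2$ has the form $\nabla^j H_N(F_1)\big[\nabla^{a_1}F_1,\dots,\nabla^{a_j}F_1\big]$ with $a_1+\dots+a_j=k$ and every $a_i\ge 1$, hence operator norm at most $N^{1-j/2}\prod_i N^{(1-a_i)/2}=N^{1-j/2}\cdot N^{(j-k)/2}=N^{1-k/2}$; summing the $O(1)$ many such terms gives $\|\nabla^k F_2(\bsig,u)\|_{\op}=O(N^{1-k/2})$, which is the content of the proposition for $i=2$ (the bound for $F_1$ being the sharper $O(N^{(1-k)/2})$).

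The only real work is the scaling bookkeeping in the middle step: one must check that each covariant derivative along $\cS_N$ genuinely costs exactly one factor $N^{-1/2}$. Concretely this means computing the Christoffel symbols of the radius-$\sqrt N$ sphere (equivalently, controlling how orthonormal tangent frames vary), verifying the corrections they introduce are $O(N^{-1/2})$ relative to the object differentiated, and confirming $\|P_{\bsig}^{\perp}u\|/\sqrt N$ stays in a compact subset of $[0,1)$. Once these are in place the estimates follow from routine multilinear bounds. Alternatively one can bypass the Christoffel bookkeeping entirely by extending $F_1$ to a Euclidean neighborhood of $\cS_N\times\cB_N^{\circ}$ in $\bbR^N\times\bbR^N$ via the same closed formula with $\sqrt N\,\bsig/\|\bsig\|$ in place of $\bsig$, differentiating in ambient coordinates, and restricting — this changes only the constants and makes every bound an elementary calculus exercise in the entire functions $\cos,\mathrm{sinc}$ and the explicit map $\bsig\mapsto P_{\bsig}^{\perp}u$.
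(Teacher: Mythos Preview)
Your argument is correct. For $F_1$ you take a different route from the paper: the paper observes that the $k$-th derivative of $F_1$ at $(\bsig,u)$ depends only on $\bsig$, $u$, and $O(1)$ tangent vectors, restricts to their $O(1)$-dimensional linear span, and rescales the resulting low-dimensional sphere from radius $\sqrt N$ to radius $1$ so that the computation becomes literally $N$-independent---this yields the $N^{(1-k)/2}$ scaling without ever writing down a formula. Your explicit $\cos/\mathrm{sinc}$ computation is more concrete and self-contained, at the price of more bookkeeping (and one loose end: the scalar building block $\|P_{\bsig}^{\perp}u\|/\sqrt N$ is not smooth at the origin, so you should work with its square instead, using that $\cos$ and $\mathrm{sinc}$ are even and hence analytic in $r^2$; the scaling conclusion is unchanged). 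For $F_2$ both proofs reduce to the chain rule against the $K_N$ bounds. Your computed exponent $N^{1-k/2}$ for $F_2$ is in fact the right one; the printed $N^{(1-k)/2}$ appears to be a typo in the proposition for $i=2$ (already $|F_2|=O(N)$ at $k=0$), and the paper's own usage in \eqref{eq:C3-approx} is consistent with $N^{1-k/2}$.
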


\subsection{An Explicit Cover for the Set of Wells}

For any $\bsig\in W(\gamma,\delta;H_N)$, one can find small $\iota\geq \iota_0(\gamma,\delta,\dots)$ which is the location of a two-sided spectral gap, i.e.
\[
\spec(\nabla_{\sph}^2 H_N(\bsig))\cap \pm [\iota,3\iota]=\emptyset
\]
(Here $\pm [a,b]=[-b,-a]\cup [a,b]$ for $0<a<b$.)
While we will always have $\iota\lesssim \gamma$, its precise value must vary with $\bsig$.
For technical reasons, we will cover $W(\gamma,\delta;H_N)$ by a constant number of subsets of $\cS_N$ in which $\iota$ and $d=|\spec(\nabla_{\sph}^2 H_N(\bsig))\cap [-\iota,\iota]|$ are made explicit. 
The Lipschitz state-following algorithm (to be described) will then depend on $(d,\iota)$.
One reason for doing so is that in constructing a globally Lipschitz extension of state following as described above, it is helpful for $d$ and $\iota$ to be held constant.

Given $\bsig\in\cS_N$, we let $U_{\iota}(\bsig;H_N)\subseteq\bsig^{\perp}\subseteq\bbR^N$ be the span of those eigenvectors of $\nabla_{\sph}^2 H_N(\bsig)$ with eigenvalues in $[-\iota,\iota]$.
We will often write $U_{\iota}^{\perp}(\bsig;H_N)$ for $U_{\iota}(\bsig;H_N)^{\perp}$.
Thus by Proposition~\ref{prop:wells-energy-for-pure}, we have $\dim(U_{\iota}(\bsig;H_N))\leq O_{\gamma,\delta}(1)$ when $\bsig\in W(\gamma,\delta;H_N)$.
For $d\in\bbN$ and $0<a<b$, let
\[
W(\gamma,\delta,d,[a,b];H_N)
=
\{\bsig\in W(\gamma,\delta;H_N)\,:\,\dim(U_{a}(\bsig;H_N))=d,~\spec(\nabla_{\sph}^2 H_N(\bsig))\cap \pm [a,b]=\emptyset\}.
\]
For convenience, we often abbreviate:
\[
W(\gamma,\delta,d,\iota;H_N)
=
W(\gamma,\delta,d,[\iota,3\iota];H_N).
\]
Next we show using the pigeonhole principle and Proposition~\ref{prop:wells-energy-for-pure} that $W(\gamma,\delta;H_N)$ is covered by a constant (independent of $N$) number of these sets.

\begin{proposition}
\label{prop:well-types}
For any $\gamma>0$, there exists a finite set of pairs $(d_j,\iota_j)_{j=1}^J$ independent of $N$ such that for small enough $\delta\in (0,\delta_0(\gamma))$ and sufficiently large $N\geq N_0(\gamma,\delta)$:
\begin{equation}
\label{eq:well-types}
\bbP\Big[W(\gamma,\delta;H_N)=\bigcup_{j=1}^J W(\gamma,\delta,d_j,\iota_j;H_N)\Big]
\geq 
1-e^{-cN}.
\end{equation}
\end{proposition}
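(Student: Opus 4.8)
\textbf{Proof proposal for Proposition~\ref{prop:well-types}.}

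The plan is to use Proposition~\ref{prop:wells-energy-for-pure} to reduce the statement to a finite-dimensional pigeonholing argument. First, fix $\gamma>0$ and apply Proposition~\ref{prop:wells-energy-for-pure} with this $\gamma$; this yields integers $k=k(\gamma)$ and $\delta'=\delta(\gamma)>0$ such that, on an event of probability $1-e^{-cN}$, every $\bsig\in\cS_N$ has at most $k-1$ eigenvalues of $\nabla^2_\sph H_N(\bsig)$ exceeding $2\sqrt{p(p-1)}+pH_N(\bsig)-\gamma/2$ among the top $\delta' N$ eigenvalues, and (combining with the crude operator-norm bound from Proposition~\ref{prop:gradients-bounded}, which controls the $O(1)$ genuine outliers) at most $k-1$ eigenvalues exceeding $-\iota$ for any fixed threshold $\iota\le\gamma$ when $\bsig\in W(\gamma,\delta;H_N)$ and $\delta$ is small. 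In particular, uniformly over $\bsig\in W(\gamma,\delta;H_N)$, one has $\dim(U_a(\bsig;H_N))\le k-1$ for every $a\in(0,\gamma]$. So the ``type'' $d=\dim(U_a(\bsig;H_N))$ always lies in the finite set $\{0,1,\dots,k-1\}$.

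Second, I handle the spectral gap. For each $\bsig\in W(\gamma,\delta;H_N)$, the eigenvalues of $\nabla^2_\sph H_N(\bsig)$ that lie in a small window near $0$ number at most $k-1$; hence among the $k$ disjoint intervals $\pm[3^{-i}\gamma, 3^{-i+1}\gamma]$ for $i=1,\dots,k$, at least one, say the $i(\bsig)$-th, contains no eigenvalue at all (a pigeonhole over $k$ intervals versus $\le k-1$ eigenvalues in $\pm[0,\gamma]$). Set $\iota(\bsig)=3^{-i(\bsig)}\gamma$; then $\spec(\nabla^2_\sph H_N(\bsig))\cap\pm[\iota(\bsig),3\iota(\bsig)]=\emptyset$, so $\bsig\in W(\gamma,\delta,d,\iota(\bsig);H_N)$ with $d=\dim(U_{\iota(\bsig)}(\bsig;H_N))$. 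Since $i(\bsig)\in\{1,\dots,k\}$ and $d\in\{0,\dots,k-1\}$, the pair $(d,\iota(\bsig))$ ranges over the finite set $(d_j,\iota_j)_{j=1}^J$ with $J\le k^2$, all independent of $N$. This gives $W(\gamma,\delta;H_N)\subseteq\bigcup_{j=1}^J W(\gamma,\delta,d_j,\iota_j;H_N)$ on the good event, and the reverse inclusion is immediate from the definition of $W(\gamma,\delta,d,\iota;H_N)$ as a subset of $W(\gamma,\delta;H_N)$. Taking $\delta_0(\gamma)=\min(\delta',\delta'')$ for whatever threshold $\delta''$ the operator-norm reduction requires, and $N_0(\gamma,\delta)$ large enough for Proposition~\ref{prop:wells-energy-for-pure} and Proposition~\ref{prop:gradients-bounded} to apply, completes the argument.

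The only delicate point — and the step I expect to require the most care — is verifying that for $\bsig\in W(\gamma,\delta;H_N)$ all eigenvalues in $\pm[0,\gamma]$ are \emph{genuinely} confined to a bounded count, i.e.\ that there is no accumulation of bulk eigenvalues near $0$. This is exactly where the well condition $\partial_\rd H_N(\bsig)-2\sqrt{p(p-1)}>\gamma$ combined with Proposition~\ref{prop:wells-energy-for-pure} is used: for $\bsig\in W(\gamma,\delta;H_N)$ one has, via \eqref{eq:radial-deriv-propto}, $pH_N(\bsig)=\partial_\rd H_N(\bsig)>2\sqrt{p(p-1)}+\gamma$, so the bulk of $\spec(\nabla^2_\sph H_N(\bsig))$ sits near $2\sqrt{p(p-1)}-pH_N(\bsig)<-\gamma$, pushing all but $k(\gamma)-1$ eigenvalues strictly below $-\gamma$; the $O(1)$ eigenvalues not controlled by the semicircle estimate are the outliers, whose total number is bounded by a constant thanks to the operator-norm bound in Proposition~\ref{prop:gradients-bounded}. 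Once this uniform count is in hand, the rest is bookkeeping.
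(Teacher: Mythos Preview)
Your approach is exactly the paper's: invoke Proposition~\ref{prop:wells-energy-for-pure} to bound the number of eigenvalues near zero, then pigeonhole over a geometric family of annuli to locate a spectral gap. However, there is a quantitative slip that breaks the pigeonhole as written.

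You assert that for $\bsig\in W(\gamma,\delta;H_N)$ the bulk eigenvalues are pushed ``strictly below $-\gamma$'', and hence that at most $k-1$ eigenvalues lie in $\pm[0,\gamma]$. But Proposition~\ref{prop:wells-energy-for-pure} only places $\lambda_j$ (for $j\ge k$) within $\gamma/2$ of the bulk top edge, which itself is only guaranteed to be below $-\gamma$ by the well condition. So you get $\lambda_j<-\gamma/2$, not $\lambda_j<-\gamma$. Consequently your outermost annulus $\pm[\gamma/3,\gamma]$ can contain order $N$ bulk eigenvalues on the negative side, and the ``$k$ intervals versus $k-1$ eigenvalues'' count collapses. (The appeal to Proposition~\ref{prop:gradients-bounded} does not help here: an operator-norm bound caps the size of outliers, not their number, and in any case the problem is the bulk, not the outliers.)

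The fix is immediate and is what the paper does: take all the annuli inside $[-\gamma/2,\gamma/2]$ (the paper uses $[-\gamma/10,\gamma/10]$ for extra slack), where Proposition~\ref{prop:wells-energy-for-pure} genuinely guarantees at most $k-1$ eigenvalues. Concretely, the paper sets $\iota_i=10^{\,i-k-5}\gamma$ for $0\le i\le k+3$, giving $k+4$ disjoint annuli $\pm[\iota_i,\iota_{i+1})$ all contained in $\pm(0,\gamma/10]$, and then pigeonholes $k+4$ intervals against at most $k$ eigenvalues. Using ratio-$10$ rather than ratio-$3$ intervals also avoids the edge case where eigenvalues sit exactly on shared endpoints of consecutive closed annuli $[\iota,3\iota]$. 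With this adjustment your argument goes through and matches the paper's.
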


\begin{proof}
    Choose $(k,\delta_0)$ depending on $\gamma$ as in Proposition~\ref{prop:wells-energy-for-pure}, and let $0<\delta\leq\delta_0$ be arbitrary.
    Let $\iota_i=10^{i-k-5}\gamma$ for $0\leq i\leq k+3$.
    We claim that the event in \eqref{eq:well-types} holds when $(d_j,\iota_j)$ vary over the set $\{0,1,\dots,k\}\times \{\iota_0,\dots,\iota_{k+3}\}$, so long as the event in Proposition~\ref{prop:wells-energy-for-pure} applies.
    Indeed on the latter event, for any $\bsig\in W(\gamma,\delta;H_N)$, there are at most $k$ eigenvalues of $\nabla_{\sph}^2 H_N(\bsig)$ in $[-\gamma/10,\gamma/10]$.
    By the pigeonhole principle, there exists some $0\leq i\leq k+3$ (depending on $\bsig$) such that none of these eigenvalues have absolute value in $[\iota_i,\iota_{i+1})$.
    Then $\bsig\in W(\gamma,\delta,d,\iota_i;H_N)$ for $0\leq d\leq k$ the number of eigenvalues of $\nabla_{\sph}^2 H_N(\bsig)$ in $[-\iota_i,\iota_i]$.
    This establishes the above claim, completing the proof.
\end{proof}

Another reason for the above construction is that the separation parameter $\iota$ governs the stability of the near-zero eigenspace.
We will use the following simple consequences of the Weyl perturbation inequalities and the Davis--Kahan Sine theorem \cite{davis1970sin}, respectively.

\begin{proposition}
\label{prop:davis-kahan}
Let $A,A'\in\bbR^{N\times N}$ be symmetric matrices with $\|A\|_{\op},\|A'\|_{\op}\leq C$.
Suppose that $\spec(A)\cap [-3\iota,3\iota]\subseteq [-\iota,\iota]$ and $A$ has $d$ eigenvalues (with multiplicity) in $[-\iota,\iota]$, and let $V\subseteq\bbR^N$ be the span of the associated eigenvectors.
Suppose $\|A-A'\|_{\op}=\eps$ is small enough depending on $(d,\iota)$.
Then:
\begin{enumerate}[label=(\roman*)]
    \item 
    \label{it:weyl}
    $A'$ has no eigenvalues in $\pm [1.1\iota,2.9\iota]$, and exactly $d$ eigenvalues in $[-1.1\iota,1.1\iota]$.
    \item 
    \label{it:davis-kahan}
    The span $V'$ of the eigenvectors of $A'$ corresponding to eigenvalues in $[-1.1\iota,1.1\iota]$ satisfies
    \[
    \|P_V-P_{V'}\|_{\op}
    \leq 
    O_{C,d,\iota}(\eps).
    \]
    (In particular, the estimate does not depend on the ambient dimension $N$.)
\end{enumerate}
\end{proposition}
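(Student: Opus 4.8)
The statement is a clean consequence of two classical matrix-perturbation facts, so the plan is essentially to set up the right spectral subsets and invoke Weyl's inequality and Davis--Kahan cleanly. First I would record the hypotheses concretely: $A$ is symmetric with $\|A\|_{\op}\le C$, its spectrum meets $[-3\iota,3\iota]$ only inside $[-\iota,\iota]$, there are exactly $d$ eigenvalues (with multiplicity) in $[-\iota,\iota]$, and $V$ is their span. Equivalently, $A$ has a genuine spectral gap: no eigenvalue lies in $\pm[\iota,3\iota]$, so the eigenvalues split into the ``central'' block in $[-\iota,\iota]$ and an ``outer'' block entirely outside $[-3\iota,3\iota]$ (some above $3\iota$, some below $-3\iota$).

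\medskip

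For part \ref{it:weyl}, I would apply Weyl's perturbation inequality $|\lambda_j(A)-\lambda_j(A')|\le \|A-A'\|_{\op}=\eps$ for every $j$. Taking $\eps$ small enough that $\eps<0.1\iota$ (this is the ``small enough depending on $(d,\iota)$'' requirement, here only on $\iota$): each of the $d$ central eigenvalues of $A$, lying in $[-\iota,\iota]$, moves to $[-\iota-\eps,\iota+\eps]\subseteq(-1.1\iota,1.1\iota)$; and each outer eigenvalue of $A$, having absolute value $\ge 3\iota$, moves to something of absolute value $\ge 3\iota-\eps>2.9\iota$. Hence $A'$ has exactly $d$ eigenvalues in $[-1.1\iota,1.1\iota]$ and none in $\pm[1.1\iota,2.9\iota]$. (Monotonicity of $\lambda_j$ under the Weyl bound guarantees the count is exactly $d$, not just at least $d$: no outer eigenvalue can cross into $[-1.1\iota,1.1\iota]$ and no central one can leave it.)

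\medskip

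For part \ref{it:davis-kahan}, I would invoke the Davis--Kahan $\sin\Theta$ theorem \cite{davis1970sin}. Let $V$ be the span of eigenvectors of $A$ for eigenvalues in $[-\iota,\iota]$ and $V'$ the span of eigenvectors of $A'$ for eigenvalues in $[-1.1\iota,1.1\iota]$; by part \ref{it:weyl} both have dimension $d$. The spectrum of $A'$ restricted to the complement of $[-1.1\iota,1.1\iota]$ lies outside $\pm[1.1\iota,2.9\iota]$ and in fact (by the Weyl estimate again) outside $\pm(2.9\iota,\infty)^c$... more simply: $\mathrm{spec}(A)\cap[-\iota,\iota]$ and $\mathrm{spec}(A')\setminus[-1.1\iota,1.1\iota]$ are separated by a gap of size at least $2\iota-\eps\ge 1.9\iota$. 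Davis--Kahan then gives
\[
\|P_V-P_{V'}\|_{\op}\le \frac{\|A-A'\|_{\op}}{1.9\iota}=\frac{\eps}{1.9\iota}=O_{C,d,\iota}(\eps),
\]
with no dependence on the ambient dimension $N$, as claimed. (One can cite the spectral-projection form of Davis--Kahan directly, e.g.\ via the resolvent/contour representation of $P_V$, so the constant depends only on the gap $\iota$.)

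\medskip

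\textbf{Main obstacle.} There is no real obstacle here---this is a bookkeeping lemma. The only point needing minor care is making sure the eigenvalue \emph{count} in \ref{it:weyl} is exactly $d$ rather than merely ``at most $k$''; this follows from the two-sided monotonicity in Weyl's inequality applied to the ordered eigenvalues, using that $A$ has a clean gap on both sides of $[-\iota,\iota]$ so that for $\eps$ small the $d$-th and $(d+1)$-th eigenvalues (from each end) cannot swap sides. The secondary point is quoting the subspace form of Davis--Kahan with the gap in the denominator rather than a version with a $\sqrt{2}$ or a different normalization, but any standard reference suffices and the $O_{C,d,\iota}(\eps)$ bound absorbs such constants.
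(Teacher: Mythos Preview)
Your proposal is correct and follows exactly the approach the paper indicates: the paper does not spell out a proof but simply states that the proposition is a consequence of the Weyl perturbation inequalities (for part~\ref{it:weyl}) and the Davis--Kahan $\sin\Theta$ theorem (for part~\ref{it:davis-kahan}), which is precisely what you carry out in detail. Your computation of the gap $1.9\iota$ and the resulting bound $\eps/(1.9\iota)$ is accurate, and your remark on the exact eigenvalue count via two-sided Weyl is the right way to handle the one subtlety.
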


We note that one could equally well replace $([-\iota,\iota]$ and $\pm [\iota,3\iota]$ by $(-\infty,\iota]$ and $[\iota,3\iota]$ above.
However we wanted to emphasize that the only problem comes from near-zero eigenvalues, i.e. a well-conditioned Hessian with some negative eigenvalues poses no technical issues for our approach (which might potentially be useful in future work).

\subsection{Success-and-Stability Along an Ensemble}
\label{subsec:jensen-bound}

As mentioned before, we consider a jointly Gaussian sequence $(H_N^{(0)},H_N^{(1)},\dots,H_N^{(K)})$ of $p$-spin Hamiltonians in which $H_N^{(j)}$ and $H_N^{(i)}$ are $(1-\eps)^{|i-j|}$-correlated for each $i,j$.
Fixing a choice $(d,\iota)$ as in the previous subsection and $(\delta,S,\eps)$, define the following events depending on $(H_N^{(0)},\dots,H_N^{(K)},\bsig^{(0)},\dots,\bsig^{(K)})$:
\begin{equation}
\label{eq:S-events}
\begin{aligned}
    \Ssolve(i)
    &\equiv
    \{\bsig^{(i)} \in W(\gamma,\delta,d,\iota;H_N^{(i)})\}
    , \\
    \Sstab(i)
    &\equiv
    \{\|\bsig^{(i)}-\bsig^{(i+1)}\|/\sqrt{N}
    <
    \delta\},
    \\
    \Sbdd(K)
    &\equiv 
    \lt(\bigcap_{i=0}^K \{H_N^{(i)}\in K_N\}\rt)
    \cap 
    \lt(\bigcap_{i=0}^{K-1}
    \{H_N^{(i+1)}-H_N^{(i)}\in \sqrt{2\eps}K_N\}
    \rt),
    \\
    \Sall(K) &= \Big(\bigcap_{i=0}^K \Ssolve(i)\Big) \cap \Big(\bigcap_{i=0}^{K-1} \Sstab(i)\Big)
    \cap 
    \Sbdd(K)
    .
\end{aligned}
\end{equation}
Given an algorithm $\cA_N$, a common choice in the above will be $\bsig^{(i)}=\cA_N(H_N^{(i)},\omega)$ for each $i$.
We let $\Ssolve(i;\cA_N)$, etc be the corresponding events, which now depend only on the Hamiltonians $(H_N^{(0)},\dots,H_N^{(K)})$.
When $\cA_N$ is specified, we let
\[
\psolve=\bbP[\Ssolve(0;\cA_N)],\quad\quad 
\punstable = 1 - \bbP[\Sstab(0;\cA_N)].
\]
Note that $\bbP[\Sbdd(K)]\geq 1-(2K+1)e^{-cN}$ by Proposition~\ref{prop:gradients-bounded} (and because $(H_N^{(i+1)}-H_N^{(i)})/\sqrt{2\eps} \stackrel{d}{=}H_N$ is marginally another $p$-spin Hamiltonian).
Additionally, if $\cA_N$ is $(S,\eps)$-stable then by definition:
\begin{equation}
\label{eq:punstable-bound}
\punstable
\leq 
S\eps/\delta^2.
\end{equation}

The following lemma from \cite{huang2025strong}\footnote{Although $\Ssolve,\Sstab$ are defined differently, the proof is identical modulo the $(2K+1)e^{-cN}$ contribution from $\Sbdd(K)$.}
gives a positive correlation property for the above events, ensuring that $\bbP[\Sall(K;\cA_N)]\geq \Omega(1)$ whenever $\psolve^2>\punstable+\Omega(1)$.
This is key to show that low-degree polynomials fail with high (as opposed to constant) probability.
Below, we denote $x_+=\max(x,0)$.

\begin{lemma}[{\cite[Lemma 3.1]{huang2025strong}}]
\label{lem:success-and-stability}
With notations as above,
    \[
        \bbP[\Sall(K;\cA_N)]
        \geq (\psolve^2-\punstable)_+^{2K}
        -
        (2K+1)e^{-cN}.
    \]
\end{lemma}

We will also need to consider more lenient analogs of the $\Ssolve$ event, which can be expected to hold when $\bsig^{(i)}$ is given by performing state following.
The crucial parameter $\tau\in [1,1.6]$ will continuously parametrize this leniency, with $\tau=1$ recovering the definitions above (the value $1.6$ is of course rather arbitrary).
We do not define $\Sstab^{\tau}$ events because state following is only capable of taking small steps anyway (this also means $\hSall^1(K)$ is more lenient than $\Sall(K)$).
Noting that $1.6^2<3$, we first let
\begin{equation}
\label{eq:W-tau-def}
W^{\tau}(\gamma,\delta,d,\iota;H_N^{(i)})
=
W(\gamma/\tau,\delta^{1/\tau},d,[\tau\iota,3\iota/\tau];H_N^{(i)}).
\end{equation}
The lenient events, again depending on $(H_N^{(0)},\dots,H_N^{(K)},\bsig^{(0)},\dots,\bsig^{(K)})$, are given by:
\begin{equation}
\label{eq:S-events-tau}
\begin{aligned}
    \hSsolve^{\tau}(i)
    &\equiv
    \{\bsig^{(i)} \in W^{\tau}(\gamma,\delta,d,\iota;H_N^{(i)})\}
    , \\
    \hSbdd^{\tau}(K)
    &\equiv 
    \lt(\bigcap_{i=0}^K \{H_N^{(i)}\in \tau K_N\}\rt)
    \cap 
    \lt(\bigcap_{i=0}^{K-1}
    \{H_N^{(i+1)}-H_N^{(i)}\in \tau\sqrt{2\eps}K_N\}
    \rt)
    \\
    \hSall^{\tau}(K) &= 
    \Big(\bigcap_{i=0}^K \hSsolve^{\tau}(i)\Big) 
    \cap 
    \hSbdd^{\tau}(K)
    .
\end{aligned}
\end{equation}
Let $\tau^*_{\times}(H_N^{(0)},\dots,H_N^{(K)},\bsig^{(0)},\dots,\bsig^{(K)})$ be the infimal value of $\tau\in [1,1.6]$ such that $\wh S_{\times}^{\tau}(K)$ holds for $\times\in \{\solve,\bdd,\all\}$, or $\tau^*_{\times}=1.6$ if $\wh S_{\times}^{1.6}(K)$ does not hold.
Of course, $\tau^*_{\all}=\min(\tau^*_{\solve},\tau^*_{\bdd})$.

\begin{proposition}
\label{prop:lenience-lipschitz}
Each $\tau^*_{\times}$ is an $L_{\ref{prop:lenience-lipschitz}}/\sqrt{N}$-Lipschitz function for some $L_{\ref{prop:lenience-lipschitz}}(d,\iota,\gamma,\delta,\eps)$.
\end{proposition}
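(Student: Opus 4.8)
The plan is to show that each quantity $\tau^*_{\times}$ for $\times \in \{\solve, \bdd, \all\}$ is Lipschitz by showing that its defining conditions depend on the underlying data $(H_N^{(0)},\dots,H_N^{(K)},\bsig^{(0)},\dots,\bsig^{(K)})$ in a quantitatively stable way, so that the infimal $\tau$ at which the event $\hat S_{\times}^{\tau}(K)$ turns on cannot move faster than linearly in perturbations of the data. Since $\tau^*_{\all} = \min(\tau^*_{\solve}, \tau^*_{\bdd})$ and the minimum of two $L$-Lipschitz functions is $L$-Lipschitz, it suffices to handle $\tau^*_{\solve}$ and $\tau^*_{\bdd}$ separately.

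For $\tau^*_{\bdd}$: the event $\hat S_{\bdd}^\tau(K)$ says each $H_N^{(i)}$ lies in $\tau K_N$ and each increment $H_N^{(i+1)}-H_N^{(i)}$ lies in $\tau\sqrt{2\eps}K_N$, where $K_N$ is the open convex set of Proposition~\ref{prop:gradients-bounded} defined by the strict bounds $\|\nabla^k H_N(\bsig)\|_{\op} < C_{\ref{prop:gradients-bounded}} N^{1-k/2}$. Thus $\tau^*_{\bdd}$ is $\max(1, \text{worst-case ratio})$ clamped to $[1,1.6]$, i.e.\ the maximum over all $i$ and all $0\le k\le p$ and $\|\bsig\|\le\sqrt N$ of $\|\nabla^k H_N^{(i)}(\bsig)\|_{\op} / (C_{\ref{prop:gradients-bounded}} N^{1-k/2})$ and the analogous quantities for the increments, again clamped to $[1,1.6]$. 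Each of these ratios is, by Proposition~\ref{prop:gradstable}, an $O(N^{(1-k)/2})/(C_{\ref{prop:gradients-bounded}}N^{1-k/2}) = O(N^{-1/2})$-Lipschitz function of the Hamiltonian coefficients (with the implicit constant depending only on $p$, hence on our fixed parameters); clamping to $[1,1.6]$ and taking suprema/maxima preserves Lipschitz constants. So $\tau^*_{\bdd}$ is $O(N^{-1/2})$-Lipschitz.

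For $\tau^*_{\solve}$: the event $\hat S_{\solve}^\tau(K)$ requires $\bsig^{(i)} \in W^\tau(\gamma,\delta,d,\iota;H_N^{(i)}) = W(\gamma/\tau,\delta^{1/\tau},d,[\tau\iota,3\iota/\tau];H_N^{(i)})$ for every $i$. Unwinding Definition~\ref{def:well} and the definition of $W(\cdots,d,[a,b];\cdot)$, this amounts to the following conditions, each of which I want to express as a sub-level or super-level set of an $O(N^{-1/2})$-Lipschitz function of $(H_N^{(i)},\bsig^{(i)})$ and a continuous (indeed piecewise-smooth, monotone) function of $\tau$: (a) $\|\nabla_\sph H_N^{(i)}(\bsig^{(i)})\| < \delta^{1/\tau}\sqrt N$; (b) $\partial_\rd H_N^{(i)}(\bsig^{(i)}) - 2\sqrt{p(p-1)} > \gamma/\tau$; (c) $\nabla^2_\sph H_N^{(i)}(\bsig^{(i)})$ has exactly $d$ eigenvalues in $[-\tau\iota,\tau\iota]$ and none in $\pm[\tau\iota,3\iota/\tau]$. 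The gradient and radial-derivative quantities in (a),(b) are $O(N^{-1/2})$-Lipschitz in $(H_N,\bsig)$ by Proposition~\ref{prop:gradstable} (with $\bsig$ also allowed to vary, using $\|\bsig^{(i)}-\bsig'^{(i)}\|$), and the right-hand thresholds $\delta^{1/\tau}$, $\gamma/\tau$ are monotone and Lipschitz in $\tau$ on $[1,1.6]$. For (c), the relevant eigenvalues of $\nabla^2_\sph H_N^{(i)}(\bsig^{(i)})$ are $O(N^{-1/2})$-Lipschitz in $(H_N,\bsig)$ by Proposition~\ref{prop:gradstable} together with Weyl's inequality; the constraint "$d$ eigenvalues in $[-a,a]$ and a spectral gap on $\pm[a,b]$" is equivalent to the $d$-th largest-in-absolute-value relevant eigenvalue lying below $a$ and the $(d{+}1)$-st lying above $b$ — a pair of inequalities with $\tau$-dependent, Lipschitz endpoints $a=\tau\iota$, $b=3\iota/\tau$. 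Collecting these, $\hat S_{\solve}^\tau(K)$ is defined by finitely many inequalities $f_\ell(\text{data}) \lessgtr g_\ell(\tau)$ where $f_\ell$ is $O(N^{-1/2})$-Lipschitz in the data and $g_\ell$ is monotone and $O(1)$-Lipschitz in $\tau$, with $g_\ell$ moreover making the inequality "easier" as $\tau$ increases. Hence $\tau^*_{\solve} = \min\{\tau\in[1,1.6] : f_\ell(\text{data}) \lessgtr g_\ell(\tau)\ \forall \ell\}$ (set to $1.6$ if empty); since each individual threshold $\tau_\ell := \inf\{\tau : f_\ell \lessgtr g_\ell(\tau)\}$ is $O(N^{-1/2})$-Lipschitz in the data (because $f_\ell$ moves $O(N^{-1/2})$-Lipschitzly while $g_\ell$ has $\tau$-derivative bounded below in absolute value by a constant depending only on $\iota,\gamma,\delta$), and $\tau^*_{\solve} = \max_\ell \tau_\ell$ clamped to $[1,1.6]$, we conclude $\tau^*_{\solve}$ is $L_{\ref{prop:lenience-lipschitz}}/\sqrt N$-Lipschitz.

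The main obstacle is verifying condition (c) rigorously — in particular that the spectral-gap/eigenvalue-count condition can genuinely be recast as the pair of clean scalar inequalities claimed, and that the continuity in $\tau$ is well-behaved at the (finitely many, $\bsig$-dependent) values of $\tau$ where an eigenvalue crosses an endpoint. The key point making this work is that on the relevant event the separation structure of $W(\gamma,\delta,d,\iota;\cdot)$ guarantees a genuine gap at scale $\asymp\iota$ between the $d$ near-zero eigenvalues and the rest of the spectrum, so for $\tau\in[1,1.6]$ the count stays locked at $d$ and the $(d{+}1)$-st eigenvalue stays bounded away from $[\tau\iota, 3\iota/\tau]$ except precisely at the crossing; moreover the $\tau$-interval $[\tau\iota,3\iota/\tau]$ is shrinking in $\tau$, so the gap condition is monotone in $\tau$, consistent with the "easier as $\tau$ grows" structure needed for the $\max_\ell$ reduction. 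One must also be slightly careful that when $\hat S_{\solve}^{1.6}(K)$ fails, setting $\tau^*_{\solve}=1.6$ is consistent with Lipschitzness: this holds because $1.6$ is the clamped value and the clamp $\tau\mapsto\min(\tau,1.6)$ is $1$-Lipschitz.
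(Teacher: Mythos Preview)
Your proof is correct and follows the same approach as the paper's, which in its entirety reads: ``It suffices to handle $\times\in \{\solve,\bdd\}$. Both follow directly from Proposition~\ref{prop:gradstable}.'' You have supplied the details the paper omits --- in particular the reduction of the spectral condition (c) to the pair of scalar inequalities $|\lambda_{(d)}|\le\tau\iota$ and $|\lambda_{(d+1)}|\ge 3\iota/\tau$, the monotonicity in $\tau$ of each constraint, and the lower bound on $|g_\ell'(\tau)|$ needed to invert --- all of which are correct. One small remark: your invocation of Proposition~\ref{prop:gradstable} for the $\bsig$-dependence of $\nabla_{\sph}H_N^{(i)}(\bsig^{(i)})$ and its Hessian tacitly uses $H_N^{(i)}\in cK_N$; strictly speaking $\tau^*_{\solve}$ alone does not enforce this, but the only downstream use is of $\tau^*_{\all}$, where $\tau^*_{\bdd}<1.6$ supplies the needed boundedness, so this is harmless.
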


\begin{proof}
    It suffices to handle $\times\in \{\solve,\bdd\}$.
    Both follow directly from Proposition~\ref{prop:gradstable}.
\end{proof}


\section{Proof of Theorem~\ref{thm:well-reduction} via State-Following}

In this section, we explain how to turn any stable well-finding algorithm into a Lipschitz algorithm via state following.
We begin in Subsection~\ref{subsec:1-step} by examining a single step of state following, and carefully handling the presence of outlier eigenvalues.
We then combine these steps to obtain a state-following algorithm which is \emph{locally} Lipschitz at locations where $\Sall(1)$ holds.
Finally we extend this algorithm to be \emph{globally} Lipschitz, thus establishing Theorem~\ref{thm:well-reduction}.

Before getting into the detailed arguments, let us specify the relative sizes of the various parameters that will be used in the arguments below.
Given $E>\ALG(p)$ we first take $\gamma=\Theta(E-\ALG(p))>0$.
We then choose $\delta$ sufficiently small given $\gamma$, as specified in Theorem~\ref{thm:well-reduction} (and also small enough for Proposition~\ref{prop:well-types}).
As Theorem~\ref{thm:well-reduction} does not constrain $(S,\eta)$, we may assume without loss of generality (i.e. pessimistically) they are respectively very large and small depending on $(\gamma,\delta)$.
Using the pigeon-hole principle, we then select $(d,\iota)=(d_j,\iota_j)$ depending on $\gamma$ as in Proposition~\ref{prop:well-types}, such that
\[
\eta'
=
\limsup_{N\to\infty}
\bbP[\cA_{N}(H_{N})\in W(\gamma,\delta,d,\iota;H_{N})]
>
\eta/J.
\]
We then choose $\eps$ small, and finally $K$ large.
This can be written informally as 
\begin{equation}
\label{eq:parameter-order}
\gamma=\Theta(E-\ALG(p))
\gg
d^{-1},\iota
\gg\delta
\gg S^{-1},\eta
\gg \eps
\gg 1/K
>0.
\end{equation}
(Note that although $(d,\iota)$ is \emph{selected} based on $\eta$, it is contained within a finite set depending only on $\gamma$.)

\subsection{One Step of State Following}
\label{subsec:1-step}

In this subsection, we fix $(d,\iota)$ and consider a single pair of $1-\eps$ correlated Hamiltonians $(H_N,\wt H_N)$, given by
\begin{equation}
\label{eq:HN-wtHN}
\wt H_N=(1-\eps)H_N+\sqrt{1-(1-\eps)^2}H_N'
\end{equation}
for $(H_N,H_N')$ an IID pair of $p$-spin Hamiltonians. 
The lemmas below detail the behavior of a single state following step for this correlated pair.
Their proofs are presented in Section~\ref{sec:deferred-proofs}.

These lemmas will be applied later in this section with $(H_N,\wt H_N)=(H_N^{(i)},H_N^{(i+1)})$ for each $0\leq i\leq K-1$.
We accordingly write $\hSsolve^{\tau}(H_N,\bsig)$ and $\hSbdd^{\tau}(H_N,\wt H_N)$ and $\hSall^{\tau}(H_N,\wt H_N;\cA_N)$ to denote the corresponding events from Subsection~\ref{subsec:jensen-bound} with $(H_N,\bsig)$ in place of $(H_N,\cA_N(H_N))$ and $(H_N,\wt H_N)$ in place of $(H_N^{(0)},H_N^{(1)})$.

Below, we choose a sufficiently large implicit constant $C(d,\iota)$ and set
\[
\delta'=C(d,\iota)\delta,
\quad\quad
\delta''=\delta^{0.6}\gg \delta^{1/1.6}.
\]
The next lemma defines the main step of state following, and shows that given $(\cA(H_N),H_N)$, it is essentially parametrized by a low-dimensional vector $u\in U_{\iota}(\cA(H_N);H_N)$.

\begin{lemma}
\label{lem:state-following-1-step}
Suppose $\hSbdd^{1.6}(H_N,\wt H_N)$ and $\hSsolve^{1.6}(H_N,\bsig)$ hold, and let $\bsig\in W^{1.6}(\gamma,\delta,d,\iota;H_N)$.
For each vector $u\in U_{\iota}(\bsig;H_N)$ with $\|u\|< \delta''\sqrt{N}$, there exists a unique $\wt\bsig(u)\in\cS_N$ such that $y=T_{\bsig}^{-1}(\wt\bsig(u))$ satisfies:
\begin{align}
y-u&\in 
U_{\iota}^{\perp}(\bsig;H_N),
\\
\|y-u\|&\leq O_{d,\iota}(\delta\sqrt{N}),
\\
\label{eq:restricted-stationary-point}
[\nabla (\wt H_N\circ T_{\bsig})](y)
&\in U_{\iota}(\bsig(u);H_N).
\end{align}
\end{lemma}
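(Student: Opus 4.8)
\textbf{Proof proposal for Lemma~\ref{lem:state-following-1-step}.}

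The plan is to set up a fixed-point / implicit function argument for the ``restricted stationary point'' condition \eqref{eq:restricted-stationary-point}, exploiting that $H_N$ is strongly concave on $U_\iota^\perp(\bsig;H_N)$ near $\bsig$ (by definition of $W^{1.6}(\gamma,\delta,d,\iota;H_N)$, the Riemannian Hessian has all eigenvalues outside $\pm[1.6\iota,3\iota/1.6]$, hence on the complement of the $d$-dimensional near-zero space it is bounded away from $0$, and by Proposition~\ref{prop:wells-energy-for-pure} the bulk is in fact very negative). First I would fix $u\in U_\iota(\bsig;H_N)$ with $\|u\|<\delta''\sqrt N$ and seek $y = u + w$ with $w\in U_\iota^\perp(\bsig;H_N)$; write $G = \wt H_N\circ T_{\bsig}$, a smooth function on $\bsig^\perp\cap\cB_N^\circ$ whose derivative bounds are controlled by Proposition~\ref{prop:geodesics-smooth} applied to $\wt H_N\in 1.6 K_N$ (from $\hSbdd^{1.6}$). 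The condition \eqref{eq:restricted-stationary-point} says exactly that $P_{U_\iota^\perp}\nabla G(u+w) = 0$. Define the map $\Phi(w) = w - \beta\, P_{U_\iota^\perp}\nabla G(u+w)$ for a suitable step size $\beta = \Theta(1)$; I would show $\Phi$ is a contraction on the ball $\{w\in U_\iota^\perp(\bsig;H_N) : \|w\|\le R\sqrt N\}$ for $R = O_{d,\iota}(\delta)$, whose unique fixed point gives $w$, hence $y$ and $\wt\bsig(u) = T_{\bsig}(y)$.

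The contraction estimate has two ingredients. For the self-map property, at $w=0$ we have $P_{U_\iota^\perp}\nabla G(u) $: since $\bsig\in W^{1.6}$ means $\|\nabla_{\sph}H_N(\bsig)\|<\delta^{1/1.6}\sqrt N$ and $\wt H_N - (1-\eps)H_N = \sqrt{1-(1-\eps)^2}\,H_N' \in O(\sqrt\eps)\cdot K_N$, together with the Taylor expansion of $G$ in the $u$-direction using \eqref{eq:geodesic-derivatives} and the second-order bound from Proposition~\ref{prop:geodesics-smooth}, one gets $\|P_{U_\iota^\perp}\nabla G(u)\| = O((\delta^{1/1.6} + \sqrt\eps + \|u\|/\sqrt N)\sqrt N)$; with $\eps\ll\delta$ and $\|u\|\le\delta''\sqrt N = \delta^{0.6}\sqrt N$ this is $O(\delta^{0.6}\sqrt N)$, comfortably inside a ball of radius $O_{d,\iota}(\delta\sqrt N)$ after one sees the curvature gain below --- actually I would be more careful here and carry the $\delta''$ bound through, matching the stated $\|y-u\|\le O_{d,\iota}(\delta\sqrt N)$ only once the restricted strong concavity with constant $\gtrsim\iota$ does the work: the fixed point satisfies $\|w\| \lesssim_{d,\iota} \iota^{-1}\|P_{U_\iota^\perp}\nabla G(u)\|$. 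For the contraction factor, I would differentiate: $\Phi'(w) = I - \beta\, P_{U_\iota^\perp}\nabla^2 G(u+w) P_{U_\iota^\perp}$ (restricted to $U_\iota^\perp$), and on this subspace $\nabla^2 G$ has all eigenvalues in a range bounded away from $0$ on both sides --- negative eigenvalues $\le -\iota + O(\sqrt\eps + R)$ and positive (outlier) ones we do not control in sign but the point is $P_{U_\iota^\perp}\nabla^2 G P_{U_\iota^\perp}$ has spectrum in $\pm[\,\iota/2,\, O(1)\,]$ once $\eps, R$ are small enough, by Proposition~\ref{prop:davis-kahan}\ref{it:davis-kahan} controlling the drift of the eigenspaces from $H_N$ to $\wt H_N$ and from $\bsig$ to nearby points. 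Then choosing $\beta = \Theta(1/C_{\ref{prop:gradients-bounded}})$ makes $\|\Phi'(w)\|_{\op} \le 1 - \Omega_{d,\iota}(1)$, wait --- this needs the restriction of $\nabla^2 G$ to be of one sign to use gradient-descent-type contraction. The cleaner route is Newton's method / the inverse function theorem directly: the map $w\mapsto P_{U_\iota^\perp}\nabla G(u+w)$ from $U_\iota^\perp$ to $U_\iota^\perp$ has derivative $P_{U_\iota^\perp}\nabla^2 G(u+w)P_{U_\iota^\perp}$ which is \emph{invertible} with inverse of norm $O(1/\iota)$ uniformly on the ball (since its spectrum avoids $\pm[\iota/2,O(1)]$... no, avoids a neighborhood of $0$), so by a quantitative inverse function theorem there is a unique zero in the ball, with the stated norm bound. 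I would use this formulation.

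For uniqueness of $\wt\bsig(u)\in\cS_N$ (not merely within the small ball), I would note that $T_{\bsig}$ is a diffeomorphism on $\bsig^\perp\cap\cB_N^\circ$ so it suffices to have uniqueness of $y$; the decomposition $y = u + w$ with $w\in U_\iota^\perp$ is forced by the first displayed condition $y - u\in U_\iota^\perp(\bsig;H_N)$, and within $U_\iota^\perp$ the zero of $P_{U_\iota^\perp}\nabla G$ near $0$ is unique by strict monotonicity (invertible derivative of a fixed sign pattern on the relevant radius), giving uniqueness in the ball $\|y-u\|\le O_{d,\iota}(\delta\sqrt N)$ which is all that is claimed.

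The main obstacle I anticipate is the bookkeeping around the outlier (near-zero but possibly positive) eigenvalues: on $U_\iota^\perp(\bsig;H_N)$ the Hessian of $\wt H_N\circ T_{\bsig}$ is \emph{not} literally the Hessian of $H_N$ restricted there --- it differs by (i) the $O(\sqrt\eps)$ perturbation $H_N\to\wt H_N$, (ii) the change of base point within the small ball $\|y-u\|\lesssim\delta\sqrt N$, which drifts the eigenspace $U_\iota(\bsig;H_N)$ by $O_{d,\iota}(\delta)$ (Proposition~\ref{prop:davis-kahan}), and (iii) geometric curvature terms from $T_{\bsig}$ of order $\|y\|/\sqrt N = O(\delta'')$. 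I must check that after projecting back onto the \emph{fixed} subspace $U_\iota^\perp(\bsig;H_N)$, the resulting operator still has spectrum avoiding a fixed neighborhood $\pm[\iota/2, \text{something}]$ of $0$ --- equivalently, that none of the genuine near-zero directions have leaked into $U_\iota^\perp$. This is exactly where the two-sided spectral gap $\spec(\nabla_{\sph}^2 H_N(\bsig))\cap\pm[1.6\iota, 3\iota/1.6] = \emptyset$ built into $W^{1.6}$ is used, together with $\eps\ll\delta$ and $\delta\ll\iota$ from the parameter ordering \eqref{eq:parameter-order}; the choice of the exponent $0.6$ in $\delta'' = \delta^{0.6}$ (so that $\|u\|/\sqrt N = \delta^{0.6} \gg \delta^{1/1.6}$ but still $\to 0$) is presumably tuned precisely so that the first-order term $\la\nabla_{\sph}\wt H_N(\bsig), u\ra$ contribution does not overwhelm the $O(\delta)$ target for $\|y-u\|$. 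Getting these constants to close consistently — in particular confirming that $C(d,\iota)$ can indeed be chosen so that $\|y-u\|\le C(d,\iota)\delta\sqrt N = \delta'\sqrt N$ — is the delicate part; everything else is a standard quantitative inverse function theorem.
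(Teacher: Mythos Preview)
Your approach is essentially the same as the paper's: both reduce to a quantitative inverse function theorem (equivalently, local convergence of Newton's method) for the map $w \mapsto P_{U_\iota^\perp}\nabla(\wt H_N \circ T_{\bsig})(u+w)$ on $U_\iota^\perp(\bsig;H_N)$, using that the restricted Hessian has spectrum bounded away from zero by $\Omega(\iota)$; uniqueness is then handled by this same invertibility. Your initial detour through a gradient-descent contraction and subsequent correction to Newton/IFT lands you exactly where the paper is.

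There is, however, a real gap in your quantitative estimate for $\|P_{U_\iota^\perp}\nabla G(u)\|$. You bound it by $O\big((\delta^{1/1.6} + \sqrt{\eps} + \|u\|/\sqrt{N})\sqrt{N}\big) = O(\delta^{0.6}\sqrt{N})$, which after the $\iota^{-1}$ gain gives only $\|y-u\| = O_{d,\iota}(\delta^{0.6}\sqrt{N})$, strictly weaker than the claimed $O_{d,\iota}(\delta\sqrt{N})$. The observation you are missing, and which the paper uses explicitly, is that $U_\iota(\bsig;H_N)$ is spanned by eigenvectors of $\nabla_{\sph}^2 H_N(\bsig)$ and is therefore an \emph{invariant} subspace, so $P_{U_\iota^\perp}\,\nabla_{\sph}^2 H_N(\bsig)[u] = 0$ for every $u \in U_\iota(\bsig;H_N)$. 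Hence in the Taylor expansion
\[
P_{U_\iota^\perp}\nabla G(u) = P_{U_\iota^\perp}\nabla G(0) + P_{U_\iota^\perp}\nabla^2 G(0)[u] + O\big((\delta'')^2\sqrt{N}\big),
\]
the middle term is not $O(\|u\|)$ but rather $P_{U_\iota^\perp}\big(\nabla_{\sph}^2 \wt H_N(\bsig) - \nabla_{\sph}^2 H_N(\bsig)\big)[u] = O(\sqrt{\eps}\,\delta''\sqrt{N}) \ll \delta\sqrt{N}$. This is precisely why $\delta'' = \delta^{0.6}$ is chosen: so that the Taylor \emph{remainder} satisfies $(\delta'')^2 = \delta^{1.2} \ll \delta$, not (as you speculate at the end) to balance a linear $\|u\|/\sqrt{N}$ contribution. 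Once you insert this cancellation, the rest of your argument closes exactly as in the paper.
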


Given $F:U\to\bbR^m$ for $U\subseteq\bbR^M$ open, we say $F$ is locally $L$-Lipschitz at $x$ if there is a neighborhood of $x$ on which $F$ is $L$-Lipschitz.
If this holds for all $x\in U$, we say $F$ is locally $L$-Lipschitz.
The next lemma shows that state following is locally Lipschitz.
Then Lemma~\ref{lem:state-following-adjust-basis} shows a new orthonormal basis around the resulting point can also be found in a locally Lipschitz way (which is important for iterating).

\begin{lemma}
\label{lem:state-following-local-lipschitz-1-step}
The map $(H_N,\wt H_N,\bsig,u)\mapsto \wt\bsig(u)$ is locally $L_{\ref{lem:state-following-local-lipschitz-1-step}}(d,\iota)$ Lipschitz at any $(H_N,\wt H_N,\bsig,u)$ for which the conditions of Lemma~\ref{lem:state-following-1-step} apply.
(In particular such $(H_N,\wt H_N,\bsig,u)$ form an open set.)
\end{lemma}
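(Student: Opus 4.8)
The statement asserts local Lipschitzness of the map $(H_N,\wt H_N,\bsig,u)\mapsto \wt\bsig(u)$, where $\wt\bsig(u)$ is characterized implicitly in Lemma~\ref{lem:state-following-1-step} as the unique point near $\bsig$ whose pullback $y=T_{\bsig}^{-1}(\wt\bsig(u))$ satisfies (i) $y-u\in U_{\iota}^{\perp}(\bsig;H_N)$, (ii) a norm bound, and (iii) the restricted stationarity condition $[\nabla(\wt H_N\circ T_{\bsig})](y)\in U_{\iota}(\bsig(u);H_N)$. Since $\wt\bsig(u)$ is defined by a nonlinear equation, the natural tool is the \emph{quantitative implicit function theorem}: I would exhibit a smooth vector-valued equation $\Phi(H_N,\wt H_N,\bsig,u;y)=0$ whose solution is $y$, verify that the partial derivative $D_y\Phi$ is invertible with norm and inverse-norm bounds depending only on $(d,\iota)$ (not on $N$), and conclude that $y$ — hence $\wt\bsig(u)=T_{\bsig}(y)$ — depends on the data in a locally Lipschitz way with constant controlled by $\|D_y\Phi\|$, $\|(D_y\Phi)^{-1}\|$, and the Lipschitz constants of the other partials.

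**Setting up the equation.** The subtlety is that the subspace $U_{\iota}(\bsig(u);H_N)$ appearing in (iii) itself depends on $\bsig(u)=T_{\bsig}(y)$, so the defining relation is not literally a fixed smooth equation. I would handle this by replacing (iii) with the equivalent condition that the \emph{well-conditioned component} of the gradient vanishes: writing $V=U_{\iota}(\bsig;H_N)$ (the Hessian eigenspace at the base point, which is a fixed smooth function of $(H_N,\bsig)$ near the configuration in question — here Proposition~\ref{prop:davis-kahan}\ref{it:davis-kahan} gives the locally Lipschitz, dimension-free dependence of $P_V$), condition (iii) is equivalent to $P_{V'}^{\perp}[\nabla(\wt H_N\circ T_{\bsig})](y)=0$ where $V'=U_{\iota}(T_{\bsig}(y);H_N)$; and by Davis--Kahan $P_{V'}$ is close to $P_V$ for $y$ small, so one can further rewrite this as $P_V^{\perp}[\nabla(\wt H_N\circ T_{\bsig})](y) + (\text{small correction})=0$. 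Combined with (i), which simply says $P_V^{\perp}(y-u)=0$, I get a single equation $\Phi=0$ for $y\in\bbR^N$. The operator $D_y\Phi$ then block-decomposes along $V\oplus V^{\perp}$: on $V^{\perp}$ it is essentially the identity (from (i)); on $V$ it is essentially $P_V\nabla^2_{\sph}\wt H_N(\bsig) P_V$, which by construction of $W^{1.6}$ and Proposition~\ref{prop:davis-kahan}\ref{it:weyl} has all eigenvalues bounded below by $\sim\iota$ in absolute value on $V$ (that is the whole point of the spectral-gap definition of the well). Hence $D_y\Phi$ is invertible with $\|(D_y\Phi)^{-1}\|_{\op}\leq O_{d,\iota}(1)$, dimension-free.

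**The Lipschitz estimate and the main obstacle.** With invertibility of $D_y\Phi$ in hand, the quantitative implicit function theorem (or equivalently a Banach fixed-point/Newton argument, as is standard and already implicitly used in Lemma~\ref{lem:state-following-1-step}) gives a neighborhood on which $y$ is a Lipschitz function of $(H_N,\wt H_N,\bsig,u)$ with constant $\leq O_{d,\iota}(1)\cdot(\text{Lipschitz constants of } \Phi \text{ in those variables})$. Those latter constants are all $O_{d,\iota}(1)$: the $u$-dependence enters linearly through (i); the $\wt H_N$- and $H_N$-dependence is controlled by Proposition~\ref{prop:gradstable} and Proposition~\ref{prop:geodesics-smooth} (which bound $\nabla^k(\wt H_N\circ T_{\bsig})$ and its stability in the disorder and in $\bsig$ uniformly, at the appropriate $N$-scaling that cancels against the $1/N$ normalizations); and the $P_V$-dependence is controlled by Davis--Kahan. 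Finally, since $T_{\bsig}$ and its inverse are $O(1)$-Lipschitz on the relevant region (Proposition~\ref{prop:geodesics-smooth} with $k=0,1$, noting $\|y\|=O(\delta\sqrt N)\ll\sqrt N$), composing gives that $\wt\bsig(u)=T_{\bsig}(y)$ is locally $L_{\ref{lem:state-following-local-lipschitz-1-step}}(d,\iota)$-Lipschitz. The openness claim is immediate since all the defining conditions (spectral gap, norm bounds, membership in $K_N$) are open.

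The step I expect to be the main obstacle is \emph{cleanly reducing condition (iii) to a genuine smooth equation} — i.e. dealing with the self-referential subspace $U_{\iota}(\bsig(u);H_N)$ — and verifying that after this reduction the correction terms are small enough not to destroy the invertibility of $D_y\Phi$ on the $V$-block. This requires carefully combining the Davis--Kahan bound (perturbation of $P_{V'}$ relative to $P_V$ is $O_{d,\iota}(\|y\|)=O_{d,\iota}(\delta\sqrt N)$, dimension-free after normalization) with the lower bound $\gtrsim\iota$ on the restricted Hessian, and checking that the former is dominated by the latter in the regime $\delta\ll\iota$ from \eqref{eq:parameter-order}. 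Everything else is a routine application of the quantitative implicit function theorem together with the already-cited smoothness and stability estimates.
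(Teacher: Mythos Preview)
Your strategy---set up an implicit equation for $y$ and invoke a quantitative implicit function theorem with dimension-free bounds on $D_y\Phi$ and its inverse---is correct and is essentially what the paper does. The paper phrases it as ``the well-conditioned stationary point of a $C^2$-close function is close,'' comparing the perturbed problem directly via Proposition~\ref{prop:geodesics-smooth} and Davis--Kahan and then citing Newton's method; but this is the same content as your implicit-function argument.

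Two corrections. First, your block decomposition of $D_y\Phi$ has $V$ and $V^{\perp}$ interchanged. Condition (i), $y-u\in V^{\perp}$, fixes the $V$-component (it says $P_V y=u$), so the $V$-block of $D_y\Phi$ is the identity. Condition (iii) is a system of equations in $V^{\perp}$, whose linearization is $P_{V^{\perp}}\nabla_{\sph}^2\wt H_N(\bsig)P_{V^{\perp}}$; \emph{this} is the block with eigenvalues bounded away from zero by $\gtrsim\iota$, since $V=U_{\iota}$ is by definition the \emph{near-zero} eigenspace (so $P_V\nabla^2 P_V$ has eigenvalues $\leq\iota$, not $\geq\iota$). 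With the swap corrected, your invertibility argument goes through.

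Second, the ``main obstacle'' you identify is not actually present: the subspace in \eqref{eq:restricted-stationary-point} is the \emph{fixed} subspace $U_{\iota}(\bsig;H_N)$ at the base point, not a self-referential $U_{\iota}(T_{\bsig}(y);H_N)$. This is clear from the proof of Lemma~\ref{lem:state-following-1-step}, where $P^{\perp}=P_{U_{\iota}(\bsig;H_N)}^{\perp}$ is fixed throughout and the equation solved is $P^{\perp}\nabla F(y)=0$; the notation $\bsig(u)$ in the statement is a typo for $\bsig$. So there is no correction term to absorb and no need to feed Davis--Kahan back into the equation---the defining relation is already a genuine smooth equation in $y$.
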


\begin{lemma}
\label{lem:state-following-adjust-basis}
Suppose the conditions of Lemma~\ref{lem:state-following-1-step} apply and $(v_1,\dots,v_d)$ is an orthonormal basis for $U_{\iota}(\bsig;H_N)$.
Then there exists a locally $L_{\ref{lem:state-following-adjust-basis}}(d,\iota)$-Lipschitz map of $(H_N,\wt H_N,u,v_1,\dots,v_d)$ that outputs an orthonormal basis $(w_1,\dots,w_d)$ of $U_{\iota}(\wt\bsig(u);\wt H_N)$.
\end{lemma}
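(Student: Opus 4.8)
The plan is to sidestep tracking individual eigenvectors of $\nabla^2_{\sph}\wt H_N(\wt\bsig(u))$ — which are not Lipschitz in the ambient matrix, due to sign ambiguities and eigenvalue crossings — and instead work with the spectral \emph{projector} onto $U_\iota(\wt\bsig(u);\wt H_N)$, which is Lipschitz thanks to a spectral gap. From this projector one then re-extracts an orthonormal basis by applying it to the reference frame $(v_1,\dots,v_d)$ and symmetrically (L\"owdin-) orthonormalizing. Throughout, $\bsig$ is the base point fixed in the hypotheses of Lemma~\ref{lem:state-following-1-step} (so the map below is really of $(H_N,\wt H_N,\bsig,u,v_1,\dots,v_d)$, as in Lemma~\ref{lem:state-following-local-lipschitz-1-step}).

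First I would show the target eigenspace varies Lipschitzly. Write $A:=\nabla^2_{\sph}H_N(\bsig)$ and $A':=\nabla^2_{\sph}\wt H_N(\wt\bsig(u))$. By Lemma~\ref{lem:state-following-local-lipschitz-1-step}, $\wt\bsig(u)$ is a locally $L_{\ref{lem:state-following-local-lipschitz-1-step}}(d,\iota)$-Lipschitz function of the data, and on $\hSbdd^{1.6}$ we have $H_N,\wt H_N\in 1.6K_N$; composing with the smoothness estimates of Propositions~\ref{prop:gradstable} and~\ref{prop:geodesics-smooth} (and $\partial_\rd H_N(\bsig)=pH_N(\bsig)/N$) shows $A'$ is a locally $O_{d,\iota}(N^{-1/2})$-Lipschitz, $O(1)$-operator-norm matrix-valued function of the data. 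Next, combining the displacement bound $\|\wt\bsig(u)-\bsig\|=O_{d,\iota}(\delta''\sqrt N)=O_{d,\iota}(\delta^{0.6}\sqrt N)$ from Lemma~\ref{lem:state-following-1-step} with $\wt H_N-H_N\in 1.6\sqrt{2\eps}\,K_N$ and Propositions~\ref{prop:gradients-bounded}--\ref{prop:gradstable}, I would get
\[
\|A'-A\|_{\op}=O_{d,\iota}\big(\delta^{0.6}+\sqrt\eps\big).
\]
Since $\bsig\in W^{1.6}(\gamma,\delta,d,\iota;H_N)$, the matrix $A$ has a two-sided spectral gap around $\pm\iota$, with exactly $d$ eigenvalues in $(-\iota,\iota)$ spanning $U_\iota(\bsig;H_N)$ and none in $\pm(\iota,1.875\iota)$; since $\delta^{0.6}+\sqrt\eps\ll\iota$ in the regime \eqref{eq:parameter-order}, Proposition~\ref{prop:davis-kahan} (Weyl together with Davis--Kahan, with the numerical constants adjusted to the window $\pm(\iota,1.875\iota)$) applies. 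It gives that $A'$ likewise has exactly $d$ eigenvalues in $(-\iota,\iota)$, so $P:=P_{U_\iota(\wt\bsig(u);\wt H_N)}$ is a well-defined rank-$d$ orthogonal projector depending locally $O_{d,\iota}(1)$-Lipschitzly on the data, with $\zeta:=\|P-P_{U_\iota(\bsig;H_N)}\|_{\op}=O_{d,\iota}(\delta^{0.6}+\sqrt\eps)$ small.

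Then I would re-extract a basis. Set $\tilde w_j:=P v_j\in U_\iota(\wt\bsig(u);\wt H_N)$. Because $(v_1,\dots,v_d)$ is orthonormal with span $U_\iota(\bsig;H_N)$ and $\|P-P_{U_\iota(\bsig;H_N)}\|_{\op}=\zeta$, the Gram matrix $G=\big(\langle v_i,Pv_j\rangle\big)_{i,j\le d}$ satisfies $\|G-I_d\|_{\op}=O(\zeta)$; hence $G\succ0$, the $\tilde w_j$ span the $d$-dimensional space $U_\iota(\wt\bsig(u);\wt H_N)$, and $G\mapsto G^{-1/2}$ is smooth near $I_d$ with dimension-free derivative bounds. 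Putting $w_j:=\sum_{k=1}^d (G^{-1/2})_{jk}\,\tilde w_k$ yields an orthonormal basis of $U_\iota(\wt\bsig(u);\wt H_N)$, and the map $(H_N,\wt H_N,\bsig,u,v_1,\dots,v_d)\mapsto(w_1,\dots,w_d)$ is a composition of the locally Lipschitz map of the previous step with the smooth, dimension-free operations $v_j\mapsto Pv_j$, formation of $G$, $G\mapsto G^{-1/2}$, and $w=G^{-1/2}\tilde w$; it is therefore locally $L_{\ref{lem:state-following-adjust-basis}}(d,\iota)$-Lipschitz. Its domain is open because the hypotheses of Lemma~\ref{lem:state-following-1-step} already cut out an open set (Lemma~\ref{lem:state-following-local-lipschitz-1-step}) and the spectral-gap conditions above are open.

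The substance sits in two places. The first is the quantitative perturbation estimate $\|A'-A\|_{\op}=O_{d,\iota}(\delta^{0.6}+\sqrt\eps)$, which must be assembled from Proposition~\ref{prop:gradstable} (used on $1.6K_N$ at the cost of $O(1)$ factors), the displacement bound of Lemma~\ref{lem:state-following-1-step}, and the $\hSbdd^{1.6}$ control of $\wt H_N-H_N$, and which is exactly what makes Proposition~\ref{prop:davis-kahan} applicable. The second is the device of converting a Lipschitz spectral projector into a Lipschitz orthonormal frame by applying it to a nearby reference frame and L\"owdin-orthonormalizing. Everything else is routine composition of locally Lipschitz and smooth maps, and I expect the first point — carefully propagating the $\delta^{0.6}$ and $\sqrt\eps$ scales through the geometry of the exponential map and the Riemannian Hessian — to be the part requiring the most care.
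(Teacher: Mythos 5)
Your proof is correct and follows essentially the same route as the paper: project $(v_1,\dots,v_d)$ by the new spectral projector (controlled via Davis--Kahan, i.e.\ Proposition~\ref{prop:davis-kahan}, after bounding the Hessian perturbation) and re-orthonormalize, with local Lipschitzness obtained by composition. The only variation is that you use L\"owdin (symmetric, $G^{-1/2}$) orthonormalization where the paper applies Gram--Schmidt to $(\wt P v_1,\dots,\wt P v_d)$; both are $d$-dimensional, hence dimension-free $O(1)$-Lipschitz near an orthonormal frame, so the choice is immaterial.
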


Finally when $\Sall(H_N,\wt H_N;\cA_N)$ holds, we show the gradient norm remains small for suitable $u$.
This will let us close the induction to perform state following for $K$ steps in the next subsection, by ensuring that the gradient norm does not grow over time.

\begin{lemma}
\label{lem:state-following-works-1-step}
In the setting of Lemma~\ref{lem:state-following-1-step}, suppose that $\Sall(H_N,\wt H_N;\cA_N)$ holds and that $u\in U_{\iota}(\bsig;H_N)$ satisfies
\begin{equation}
\label{eq:correct-u-for-next-step}
\|u-P_{U_{\iota}(\bsig;H_N)} T_{\bsig}^{-1}(\cA(\wt H_N))\|\leq \delta^2\sqrt{N}.
\end{equation}
Then we have:
    \begin{equation}
    \label{eq:state-following-works}
    \begin{aligned}
    \|\wt\bsig(u)-\cA_N(\wt H_N)\|&\leq \delta'\sqrt{N},
    \\
    \|\nabla_{\sph} \wt H_N(\wt \bsig(u))\|&\leq \delta'\sqrt{N}.
    \end{aligned}
    \end{equation}
\end{lemma}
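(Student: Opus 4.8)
The plan is to show that $\wt\bsig(u)$ is close to $\cA_N(\wt H_N)$ by comparing both to $T_\bsig(u^{\perp})$ where $u^{\perp} = T_\bsig^{-1}(\cA_N(\wt H_N))$, and then deduce the gradient bound from the first estimate together with the smoothness of $\wt H_N$. First I would set $\hat u = P_{U_\iota(\bsig;H_N)} T_\bsig^{-1}(\cA_N(\wt H_N))$, so that hypothesis \eqref{eq:correct-u-for-next-step} says $\|u - \hat u\| \le \delta^2\sqrt N$. On the event $\Sall(H_N,\wt H_N;\cA_N)$ we have $\cA_N(\wt H_N)\in W(\gamma,\delta,d,\iota;\wt H_N)$ and $\|\cA_N(H_N)-\cA_N(\wt H_N)\|/\sqrt N < \delta$ (the $\Sstab$ condition), so $T_\bsig^{-1}(\cA_N(\wt H_N))$ is well-defined with norm $O(\delta\sqrt N)$, and decomposing it into its $U_\iota(\bsig;H_N)$ and $U_\iota^\perp(\bsig;H_N)$ components, the former is exactly $\hat u$ while the latter has norm $O(\delta\sqrt N)$. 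Thus $\cA_N(\wt H_N)$ is one candidate for the ``$\wt\bsig(\hat u)$-type'' construction with input $\hat u$, up to the fact that it satisfies a full stationarity condition (being an approximate well of $\wt H_N$), which is stronger than \eqref{eq:restricted-stationary-point}.

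The key step is a uniqueness-via-strict-concavity argument. Since $\bsig\in W^{1.6}(\gamma,\delta,d,\iota;H_N)$, by Proposition~\ref{prop:wells-energy-for-pure} and Proposition~\ref{prop:davis-kahan}\ref{it:weyl} the Hessian $\nabla^2_\sph (\wt H_N\circ T_\bsig)$ at nearby points is strictly concave on $U_\iota^\perp(\bsig;H_N)$ with a spectral gap of order $\iota$ below $-\iota$ (the bulk eigenvalues sit near $-(pH_N(\bsig) + 2\sqrt{p(p-1)}) < -\gamma$, which dominates for small $\delta$); here one uses $\hSbdd^{1.6}$ to control $\|\wt H_N - H_N\|$ and $\|\nabla^k H_N\|_\op$ via Proposition~\ref{prop:gradstable}. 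Lemma~\ref{lem:state-following-1-step} already gives existence and uniqueness of $\wt\bsig(u)$ with $y = T_\bsig^{-1}(\wt\bsig(u))$ satisfying $y - u \in U_\iota^\perp(\bsig;H_N)$ and $\|y-u\| = O_{d,\iota}(\delta\sqrt N)$ and the restricted stationarity \eqref{eq:restricted-stationary-point}. To compare $\wt\bsig(u)$ and $\cA_N(\wt H_N)$: write $y' = T_\bsig^{-1}(\cA_N(\wt H_N))$ and note $P_{U_\iota(\bsig;H_N)} y' = \hat u$ while $y'$ satisfies $\|\nabla(\wt H_N\circ T_\bsig)(y')\| < \delta'\sqrt N$ (an approximate-well bound, transported through the smoothness of $T_\bsig$ in Proposition~\ref{prop:geodesics-smooth}), hence in particular the $U_\iota^\perp$-component of this gradient is small. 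Now consider the fixed-$U_\iota$-slice map $z \mapsto \nabla(\wt H_N\circ T_\bsig)(u + z)$ for $z\in U_\iota^\perp(\bsig;H_N)$: by the strict concavity above, this map is a bijection onto a neighborhood of $0$ with inverse $O(1/\iota)$-Lipschitz, so the point $z$ with $P_{U_\iota^\perp}[\nabla(\wt H_N\circ T_\bsig)(u+z)] = 0$ is unique and coincides with $y - u$. Comparing against $y' - \hat u$ — which nearly solves the same equation (residual $O(\delta'\sqrt N)$) for the nearby input $\hat u$ (at distance $\le\delta^2\sqrt N$ from $u$) — a quantitative inverse function estimate gives $\|y - y'\| = \|(y-u) - (y'-\hat u)\| + \|u - \hat u\| \le O_{d,\iota}(\delta' \sqrt N / \iota) + \delta^2\sqrt N$. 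Choosing the implicit constant $C(d,\iota)$ in $\delta' = C(d,\iota)\delta$ large enough (and $\delta$ small), and applying the $O(1)$-Lipschitz bound on $T_\bsig$, yields $\|\wt\bsig(u) - \cA_N(\wt H_N)\| \le \delta'\sqrt N$, the first claim.

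For the second claim, I would use that $\cA_N(\wt H_N)$ is a $(\gamma,\delta)$-well of $\wt H_N$, so $\|\nabla_\sph \wt H_N(\cA_N(\wt H_N))\| < \delta\sqrt N$, and then Proposition~\ref{prop:gradstable} (Lipschitz continuity of $\nabla \wt H_N$ in the spatial variable, with $\wt H_N\in 1.6 K_N$ from $\hSbdd^{1.6}$) together with the just-proven bound $\|\wt\bsig(u) - \cA_N(\wt H_N)\| \le \delta'\sqrt N$ gives $\|\nabla_\sph \wt H_N(\wt\bsig(u))\| \le \delta\sqrt N + C_{\ref{prop:gradstable}}\,\delta'\sqrt N \le \delta'\sqrt N$, after possibly enlarging $C(d,\iota)$ once more. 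The main obstacle I anticipate is making the quantitative inverse function theorem argument clean: one must track that the concavity gap (of order $\min(\gamma,\iota)$ up to constants) survives over a ball of radius $O_{d,\iota}(\delta\sqrt N)$ around $\bsig$ in both Hamiltonians $H_N$ and $\wt H_N$ simultaneously, and that the $U_\iota$ versus $U_\iota^\perp$ splitting (defined via $H_N$ at $\bsig$) remains nearly aligned with the corresponding splitting at the moved point $\wt\bsig(u)$ for $\wt H_N$ — this is exactly where Proposition~\ref{prop:davis-kahan}\ref{it:davis-kahan} is needed, and the error terms it produces must be absorbed into the slack between $\delta$ and $\delta'$. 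Everything else is a routine propagation of the smoothness estimates from Propositions~\ref{prop:gradients-bounded}, \ref{prop:gradstable}, and \ref{prop:geodesics-smooth}.
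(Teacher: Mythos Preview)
Your proposal is correct and follows essentially the same route as the paper: pull both $\wt\bsig(u)$ and $\cA_N(\wt H_N)$ back through $T_\bsig$, use the well-conditioning of $\nabla^2 F$ on $U_\iota^\perp$ (the paper simply cites the estimate \eqref{eq:explicit-LB} from the proof of Lemma~\ref{lem:state-following-1-step}) to conclude $\|y-y'\|\le O_{d,\iota}(\delta\sqrt N)$, then push forward and use Lipschitzness of $\nabla_\sph\wt H_N$ for the second line. Two minor simplifications relative to your plan: you only need that the Hessian restricted to $U_\iota^\perp$ has eigenvalues outside $[-2\iota,2\iota]$ (well-conditioning), not strict concavity; and the Davis--Kahan alignment of eigenspaces at $\wt\bsig(u)$ that you flag as an obstacle is not actually needed here, since the $U_\iota/U_\iota^\perp$ splitting is fixed once and for all at $(\bsig,H_N)$ throughout this lemma.
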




\subsection{A Locally Lipschitz State Following Algorithm}

We continue to fix $(d,\iota)$.
Here we explain how to perform the $1$-step state following from the previous subsection $K$ times.
By doing so, we will construct a \emph{locally} $O(1)$-Lipschitz partial function $\cA_N^{\LocLip}$, which is well-defined whenever $\hSall^{1.6}(K)$ holds along the state following trajectory.
$\cA_N^{\LocLip}$ depends on the following auxiliary parameters, which together constitute the independent randomness $\omega$.
In fact $\cA_N^{\LocLip}(H_N,\omega)$ will be locally Lipschitz in both inputs, although only the former is directly relevant for Proposition~\ref{prop:BOGP-input}.
\begin{itemize}
    \item 
    A $p$-spin Hamiltonian $H_N^{(0)}$, together with $\bsig^{(0)}=\cA_N(H_N^{(0)},\omega^*)$.
    (Here $\omega^*$ is the independent randomness used in the original stable algorithm $\cA_N$.)
    \item An orthonormal basis $(v_1^{(0)},\dots,v_d^{(0)})$ for $U_{\iota}(\bsig^{(0)};H_N^{(0)})$, assuming that $\Ssolve(0)$ holds (with the specified choice of $(d,\iota)$). 
    (If $\Ssolve(0)$ does not hold, then $(v_1^{(0)},\dots,v_d^{(0)})$ is arbitrary; additionally $(v_1^{(0)},\dots,v_d^{(0)})$ must depend Borel measurably on $H_N^{(0)}$.)
    \item 
    Vectors $\wt u^{(0)},\dots,\wt u^{(K-1)}\in\bbR^d$, which are IID uniform in the set $\{\wt u\in\bbR^d:\|\wt u\|<\sqrt{N}\}$.
    \item IID standard Gaussian $p$-tensors $\bG^{(1)},\dots,\bG^{(K)}\in\bbR^{N^p}$.
\end{itemize}

Given an input disorder $H_N$ and the above data, we first construct a sequence $(H_N^{(0)},\dots,H_N^{(K)})$ with law $\cL_N(p,K,\eps)$, i.e.\ so that $H_N^{(i)},H_N^{(j)}$ are $(1-\eps)^{|i-j|}$-correlated.
To do so, let
\[
H_N^{(K)}
=
(1-\eps)^K H_N^{(0)}+ \sqrt{1-(1-\eps)^{2K}}H_N
\]
so that $H_N^{(0)},H_N^{(K)}$ are $(1-\eps)^K$ correlated.
We then iteratively for $k=1,2,\dots,K-1$ set
\[
H_N^{(k)}=\bbE^{\cL_N(p,K,\eps)}[H_N^{(k)}|H_N^{(k-1)},H_N^{(K)}]+a(K,k)\bG^{(k)}
\]
for suitable constants $a(K,k)\in [0,1]$.
It is not hard to see that there exist constants $a(K,k)$ such that the above is equivalent to sequential sampling from $\cL_N(p,K,\eps)$, so the resulting ensemble has law $\cL_N(p,K,\eps)$.

We will define $\cA_N^{\LocLip}$ by $K$ iterations of state-following, via a sequence $\bsig^{(1)},\dots,\bsig^{(K)}\in\cS_N$ of approximate algorithmic outputs with $\cA_N^{\LocLip}(H_N,\omega)=\bsig^{(K)}$.
Simultaneously, we will construct for each $1\leq j\leq K$ an orthonormal basis $(v^{(j,1)},\dots,v^{(j,d)})$ for $U_{\iota,j}\equiv U_{\iota}(\bsig^{(j)};H_N^{(j)})$.
In each step, given $\bsig^{(j)}$ and the orthonormal basis $(v^{(j,1)},\dots,v^{(j,d)})$ for $U_{\iota,j}$ the next objects for $j+1$ are given as follows:
\begin{enumerate}[label=(\roman*)]
    \item 
    \label{it:next-translate}
    Let $u^{(j)}=\sum_{i=1}^d \wt u^{(j)}_i v^{(j,i)}\in U_{\iota,j}$.\footnote{In words, the auxiliary $\wt u^{(j)}$ encodes the coordinates of the translation vector $u^{(j)}\in U_{\iota,j}$ in the current basis $(v^{(j,1)},\dots,v^{(j,d)})$.}
    \item 
    \label{it:next-iterate}
    Use Lemma~\ref{lem:state-following-1-step} with $(H_N,\wt H_N,\bsig,u)=(H_N^{(j)},H_N^{(j+1)},\bsig^{(j)},u^{(j)})$ to construct $\bsig^{(j+1)}=\wt \bsig(u)$.
    \item 
    \label{it:next-basis}
    Use Lemma~\ref{lem:state-following-adjust-basis} to compute the next basis $(v^{(j+1,1)},\dots,v^{(j+1),d})=(w_1,\dots,w_d)$ for $U_{\iota,j+1}$.
\end{enumerate}

Assuming the conditions of Lemmas~\ref{lem:state-following-1-step} and Lemma~\ref{lem:state-following-adjust-basis} hold at each stage, we let 
\[
\cA_N^{\LocLip}=\bsig^{(K)}
\]
be the final output of the above iteration.
If either condition is false at any time, then the output of $\cA_N^{\LocLip}$ is not defined.
Altogether, these conditions amount to requiring that $\hSall^{1.6}(K)$ holds (with the implicit understanding that if $\bsig^{(k)}$ causes $\hSall^{1.6}(k)$ to be violated, then future iterates $\bsig^{(k+1)},\dots$ are not actually defined).
We next show that these conditions are indeed satisfied for some specific choices of $\wt u^{(0)},\dots,\wt u^{(K-1)}$ as long as $\Sall(K;\cA_N)$ holds.
Here it is crucial that the lemmas from the previous subsection use tolerance parameter $\tau>1$, because $\Sall(K;\cA_N)$ will guarantee only that $\bsig^{(j)}\approx \cA_N(H_N^{(j)};\omega)$ rather than exact equality.

\begin{lemma}
\label{lem:multi-step-state-following-makes-sense}
Suppose $\Sall(K;\cA_N)$ holds for $(H_N^{(0)},\dots,H_N^{(K)})$ above and $H_N\in K_N$. 
Then there exist $\wt u^{(0)},\dots,\wt u^{(K-1)}$ such that:
\begin{enumerate}[label=(\alph*)]
    \item 
    \label{it:lemmas-apply}
    $\hSall^{1.2}(K)$ holds.
    In particular, the conditions of Lemmas~\ref{lem:state-following-1-step} and \ref{lem:state-following-adjust-basis} hold in each step of \ref{it:next-iterate}, \ref{it:next-basis}, and so $\cA_N^{\LocLip}=\bsig^{(K)}$ is well-defined.
    \item 
    \label{it:good-approximation}
    We have $\|\bsig^{(k)}-\cA_N(H_N^{(k)})\|\leq \delta^{0.9}\sqrt{N}$ for each $0\leq k\leq K$.
    \item 
    \label{it:state-following-succeeds}
    The final output satisfies $\bsig^{(K)}\in W(\gamma/2,\delta^{1/3};H_N)$.
\end{enumerate}
\end{lemma}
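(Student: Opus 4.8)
The plan is to run the $K$-step construction of $\cA_N^{\LocLip}$ with a carefully chosen auxiliary sequence $\wt u^{(0)},\dots,\wt u^{(K-1)}$ and to verify the three conclusions by induction on the step index. The starting point is that on $\Sall(K;\cA_N)$ we have, for every $0\le i\le K$: $\cA_N(H_N^{(i)})\in W(\gamma,\delta,d,\iota;H_N^{(i)})$ (by $\Ssolve(i;\cA_N)$); $\|\cA_N(H_N^{(i)})-\cA_N(H_N^{(i+1)})\|<\delta\sqrt N$ for $i<K$ (by $\Sstab(i;\cA_N)$); and $H_N^{(i)}\in K_N$, $H_N^{(i+1)}-H_N^{(i)}\in\sqrt{2\eps}K_N$ (by $\Sbdd(K)$). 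Together with the hypothesis $H_N\in K_N$, these imply that the single-pair event $\Sall(H_N^{(k)},H_N^{(k+1)};\cA_N)$ of Subsection~\ref{subsec:1-step} holds for every $0\le k\le K-1$. I would carry the inductive invariant that, after $k$ steps, $\bsig^{(j)}\in W^{1.2}(\gamma,\delta,d,\iota;H_N^{(j)})$ and $\|\bsig^{(j)}-\cA_N(H_N^{(j)})\|\le\delta'\sqrt N$ for all $0\le j\le k$, with $(v^{(j,1)},\dots,v^{(j,d)})$ an orthonormal basis of $U_\iota(\bsig^{(j)};H_N^{(j)})$. The base case $k=0$ is immediate from $\bsig^{(0)}=\cA_N(H_N^{(0)})$ once one observes $W(\gamma,\delta,d,\iota;\cdot)\subseteq W^{1.2}(\gamma,\delta,d,\iota;\cdot)$: the gradient and radial thresholds relax, and since no eigenvalue of $\nabla^2_\sph$ lies in $\pm[\iota,3\iota]$, enlarging the scale from $\iota$ to $1.2\iota$ preserves both the count $d$ and the two-sided gap. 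The analogous comparison gives $W^{1.2}\subseteq W^{1.6}$, used below.

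For the inductive step, given the invariant at stage $k$, I would take $\wt u^{(k)}\in\bbR^d$ to be the coordinate vector, in the basis $(v^{(k,i)})$, of $u^{(k)}=P_{U_\iota(\bsig^{(k)};H_N^{(k)})}\,T_{\bsig^{(k)}}^{-1}(\cA_N(H_N^{(k+1)}))$. By $\Sstab(k;\cA_N)$ and the invariant, $\cA_N(H_N^{(k+1)})$ lies within Euclidean distance $O_{d,\iota}(\delta)\sqrt N$ of $\bsig^{(k)}$ on $\cS_N$, hence in the domain of $T_{\bsig^{(k)}}^{-1}$; this gives $\|u^{(k)}\|=O_{d,\iota}(\delta)\sqrt N<\delta''\sqrt N$ and $\|\wt u^{(k)}\|=\|u^{(k)}\|<\sqrt N$, so $\wt u^{(k)}$ is an admissible parameter value. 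Since $\bsig^{(k)}\in W^{1.2}\subseteq W^{1.6}$ and $\hSbdd^{1.6}(H_N^{(k)},H_N^{(k+1)})$ follows from $\Sbdd(K)$ and convexity of $K_N$, the hypotheses of Lemmas~\ref{lem:state-following-1-step} and \ref{lem:state-following-adjust-basis} hold with $(H_N,\wt H_N,\bsig,u)=(H_N^{(k)},H_N^{(k+1)},\bsig^{(k)},u^{(k)})$, producing $\bsig^{(k+1)}=\wt\bsig(u^{(k)})$ and a new basis $(v^{(k+1,i)})$ for $U_\iota(\bsig^{(k+1)};H_N^{(k+1)})$. As $u^{(k)}$ satisfies \eqref{eq:correct-u-for-next-step} with zero error and $\Sall(H_N^{(k)},H_N^{(k+1)};\cA_N)$ holds, Lemma~\ref{lem:state-following-works-1-step} then yields \eqref{eq:state-following-works}: $\|\bsig^{(k+1)}-\cA_N(H_N^{(k+1)})\|\le\delta'\sqrt N$ and $\|\nabla_\sph H_N^{(k+1)}(\bsig^{(k+1)})\|\le\delta'\sqrt N$.

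To re-close the invariant I would verify $\bsig^{(k+1)}\in W^{1.2}(\gamma,\delta,d,\iota;H_N^{(k+1)})$: the gradient bound is $\delta'\sqrt N=C(d,\iota)\delta\sqrt N<\delta^{1/1.2}\sqrt N$ for $\delta$ small; the radial bound follows from a first-order Taylor expansion of $H_N^{(k+1)}\in K_N$ about $\cA_N(H_N^{(k+1)})$ via \eqref{eq:radial-deriv-propto} and $\Ssolve(k+1;\cA_N)$, giving $\partial_\rd H_N^{(k+1)}(\bsig^{(k+1)})-2\sqrt{p(p-1)}>\gamma-O_{d,\iota}(\delta)>\gamma/1.2$; and the two spectral conditions follow by bounding the Riemannian Hessian perturbation between $\bsig^{(k+1)}$ and $\cA_N(H_N^{(k+1)})$ by $O_{d,\iota}(\delta)$ (combining Proposition~\ref{prop:gradstable} with $k=2$ and \eqref{eq:riemannian-hessian}), then applying Proposition~\ref{prop:davis-kahan}\ref{it:weyl} to carry the count $d$ and the gap from $\cA_N(H_N^{(k+1)})$---which satisfies the $\tau=1$ well condition---over to $\bsig^{(k+1)}$ at scale $1.2\iota$. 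The step I expect to require the most care is precisely this: ensuring the error $\|\bsig^{(k)}-\cA_N(H_N^{(k)})\|\le\delta'\sqrt N$ does \emph{not} accumulate over the $K$ steps uniformly in $K$. It does not, because Lemma~\ref{lem:state-following-works-1-step} re-derives this bound at each stage from $\Sall(H_N^{(k)},H_N^{(k+1)};\cA_N)$ alone, independently of the incoming error so long as it stays within tolerance---which is exactly what the nested relaxation levels $1<1.2<1.6$ are there to absorb, since the true output $\cA_N(H_N^{(j)})$ is only known to satisfy the $\tau=1$ well condition whereas the state-following iterate $\bsig^{(j)}$ is controlled only at level $1.2$.

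Finally I would read off the conclusions. For \ref{it:lemmas-apply}: $\hSall^{1.2}(K)$ is the conjunction over $0\le j\le K$ of the invariant's well-memberships $\hSsolve^{1.2}(j)$ and $\hSbdd^{1.2}(K)$, the latter following from $\Sbdd(K)$ since $K_N$ is convex with $0\in K_N$; and the conditions of Lemmas~\ref{lem:state-following-1-step}, \ref{lem:state-following-adjust-basis} were checked at every step, so $\cA_N^{\LocLip}=\bsig^{(K)}$ is well-defined. Part \ref{it:good-approximation} is the invariant's distance bound together with $\delta'=C(d,\iota)\delta\le\delta^{0.9}$ for $\delta$ small depending on $(d,\iota)$. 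For \ref{it:state-following-succeeds}, write $H_N=q^{-1}(H_N^{(K)}-(1-\eps)^KH_N^{(0)})$ with $q=\sqrt{1-(1-\eps)^{2K}}$; since $H_N^{(0)}\in K_N$ forces $\|\nabla_\sph H_N^{(0)}(\bsig^{(K)})\|=O(\sqrt N)$ and $|\partial_\rd H_N^{(0)}(\bsig^{(K)})|=O(1)$, linearity of $\nabla_\sph$ and $\partial_\rd$ in the Hamiltonian at a fixed point together with $\bsig^{(K)}\in W^{1.2}(\gamma,\delta,d,\iota;H_N^{(K)})$ gives $\|\nabla_\sph H_N(\bsig^{(K)})\|\le q^{-1}(\delta^{1/1.2}+O((1-\eps)^K))\sqrt N$ and $\partial_\rd H_N(\bsig^{(K)})-2\sqrt{p(p-1)}\ge\gamma/1.2-O((1-\eps)^K)$. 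Taking $K$ large as in \eqref{eq:parameter-order} makes $(1-\eps)^K$ negligible against $\delta^{1/3}$ and $\gamma$ and $q^{-1}$ arbitrarily close to $1$, placing $\bsig^{(K)}\in W(\gamma/2,\delta^{1/3};H_N)$.
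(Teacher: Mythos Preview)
Your proof is correct and follows essentially the same approach as the paper: an induction on $k$ using Lemma~\ref{lem:state-following-works-1-step} as the engine, with the same choice of $u^{(k)}$ to satisfy \eqref{eq:correct-u-for-next-step}, and the same observation that the error $\delta'\sqrt N$ is regenerated fresh at each step rather than accumulated. You supply more detail than the paper in verifying the $W^{1.2}$ membership (where the paper just points to the proof of Proposition~\ref{prop:lenience-lipschitz}), and for part~\ref{it:state-following-succeeds} you work directly from $\bsig^{(K)}\in W^{1.2}(\cdots;H_N^{(K)})$ via the explicit linear formula for $H_N$, whereas the paper combines part~\ref{it:good-approximation} with $\cA_N(H_N^{(K)})\in W(\gamma,\delta;H_N^{(K)})$ and the bound $H_N-H_N^{(K)}\in o_{K\to\infty}(1)\cdot K_N$; both routes are valid and amount to the same estimate.
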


\begin{proof}
For the first two parts, we induct together on $k$; the inductive hypothesis for \ref{it:lemmas-apply} is $\hSall^{1.2}(k)$.
Since $\Sall(K;\cA_N)$ includes $\Sbdd(K;\cA_N)=\hSbdd^1(K)$, it suffices to consider the $\hSsolve^{1.2}(k)=\hSsolve^{1.2}(H_N^{(k)},\bsig^{(k)})$ events in the induction.
The base case $k=0$ holds by definition as $\bsig^{(0)}=\cA_N(H_N^{(0)})\in W(\gamma,\delta;H_N^{(0)})$.

We first perform the inductive step for \ref{it:good-approximation}.
This follows from Lemma~\ref{lem:state-following-works-1-step}.
Firstly, $\Sall(H_N^{(k)},H_N^{(k+1});\cA_N)$ holds by assumption. 
Second, we may choose an appropriate $u\in U_{\iota,k+1}$ satisfying \eqref{eq:correct-u-for-next-step} and identifying the corresponding $\wt u^{(k+1)}$ under the linear isometry from Lemma~\ref{lem:state-following-adjust-basis}.
(Note that $\delta'=O_{d,\iota}(\delta)\leq o(\delta^{0.9})$ which closes the induction.)

For the inductive step on \ref{it:lemmas-apply}, we combine the just-proved part~\ref{it:good-approximation} with the last part of Lemma~\ref{lem:state-following-works-1-step}.
Checking the conditions of $\hSsolve^{1.2}(H_N^{(k+1)},\bsig^{(k+1)})$ follows similarly to Proposition~\ref{prop:lenience-lipschitz}.
This completes the inductive step and hence establishes \ref{it:lemmas-apply} and \ref{it:good-approximation}.

Finally for \ref{it:state-following-succeeds}, as $K$ is large depending on $(\eps,\delta)$, the assumption of $\Sall(K;\cA_N)$ implies the following.
If $x,y\in\cS_N$ satisfy $x\in W(\gamma,\delta;H_N^{(K)})$ and $\|y-x\|\leq \delta^{0.9}\sqrt{ N}$, then $y\in W(\gamma/2,\delta^{1/3};H_N)$.
Indeed, we chose $K$ large depending on $\eps$ and thus have $H_N-H_N^{(K)}\in o_{K\to\infty}(1)\cdot K_N$. Together with Proposition~\ref{prop:gradstable}, this yields the claim and thus \ref{it:state-following-succeeds} by taking $(x,y)=(\cA_N(H_N^{(K)}),\bsig^{(K)})$.
\end{proof}

\begin{lemma}
\label{lem:loc-lip-property}
$\cA_N^{\LocLip}$ is locally $L_{\ref{lem:loc-lip-property}}(d,\iota,\delta,S,\eta,\eps,K)$-Lipschitz in $(H_N,\omega)$ on its domain, namely the set on which $\hSall^{1.6}(K)$ holds for $(H_N^{(0)},\dots,H_N^{(K)},\bsig^{(0)},\dots,\bsig^{(K)})$ defined above.
\end{lemma}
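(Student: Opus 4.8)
The plan is to show that $\cA_N^{\LocLip}$ is obtained by composing a bounded number (namely $O(K)$) of maps, each of which is locally Lipschitz with dimension-free constant on the relevant open set, and then invoke the fact that a composition of finitely many locally Lipschitz maps is locally Lipschitz with constant equal to the product. I would organize the argument around the explicit iteration \ref{it:next-translate}--\ref{it:next-basis} defining the $\bsig^{(j)}$ and the bases $(v^{(j,1)},\dots,v^{(j,d)})$.

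First I would observe that the map $(H_N,\omega)\mapsto (H_N^{(0)},\dots,H_N^{(K)})$ is \emph{affine} in its inputs: by construction $H_N^{(K)}$ is an explicit linear combination of $H_N^{(0)}$ and $H_N$, and each $H_N^{(k)}$ for $1\le k\le K-1$ is a fixed linear combination of $H_N^{(k-1)}$, $H_N^{(K)}$ and the auxiliary tensor $\bG^{(k)}$ (which is part of $\omega$). Since the coefficients $(1-\eps)^K$, $a(K,k)$, etc., depend only on $(K,\eps)$ and not on $N$, this affine map has operator norm bounded by a constant $C(K,\eps)$ in the un-normalized Euclidean metric \eqref{eq:hamiltonian-norm}. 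Likewise $\bsig^{(0)}$ and the basis $(v^{(0,1)},\dots,v^{(0,d)})$ are simply read off from $\omega$ (they are coordinates of $\omega$), so the map to the initial data $(H_N^{(0)},\dots,H_N^{(K)},\bsig^{(0)},v^{(0,\cdot)})$ is globally $C(K,\eps)$-Lipschitz.

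Next I would run the induction on $j$. Suppose that on the open set where $\hSall^{1.6}(K)$ holds, the map sending $(H_N,\omega)$ to the tuple $(H_N^{(0)},\dots,H_N^{(K)},\bsig^{(j)},v^{(j,1)},\dots,v^{(j,d)})$ is locally $L_j$-Lipschitz for some $L_j(d,\iota,\delta,S,\eta,\eps,K)$. For the step $j\to j+1$: the translation vector $u^{(j)}=\sum_i \wt u^{(j)}_i v^{(j,i)}$ is bilinear in $(\wt u^{(j)},v^{(j,\cdot)})$ with $\|\wt u^{(j)}\|\le\sqrt N$ and $\|v^{(j,i)}\|=1$, hence locally Lipschitz with dimension-free constant; then $\bsig^{(j+1)}=\wt\bsig(u^{(j)})$ is locally $L_{\ref{lem:state-following-local-lipschitz-1-step}}(d,\iota)$-Lipschitz in $(H_N^{(j)},H_N^{(j+1)},\bsig^{(j)},u^{(j)})$ by Lemma~\ref{lem:state-following-local-lipschitz-1-step}, whose hypotheses hold precisely because $\hSall^{1.6}(K)$ (hence $\hSall^{1.6}(j+1)$, hence the conditions of Lemma~\ref{lem:state-following-1-step}) is assumed; and the new basis $(v^{(j+1,1)},\dots,v^{(j+1,d)})$ is locally $L_{\ref{lem:state-following-adjust-basis}}(d,\iota)$-Lipschitz in $(H_N^{(j)},H_N^{(j+1)},u^{(j)},v^{(j,\cdot)})$ by Lemma~\ref{lem:state-following-adjust-basis}. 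Composing these with the inductive hypothesis, and noting the $H_N^{(i)}$ for $i\ne j,j+1$ are carried along unchanged, gives a constant $L_{j+1}$ of the claimed form. After $K$ steps, $\cA_N^{\LocLip}=\bsig^{(K)}$ is the last coordinate of the output, and $L_{\ref{lem:loc-lip-property}}=L_K$ works; the openness of the domain is exactly the ``in particular'' clause of Lemma~\ref{lem:state-following-local-lipschitz-1-step} together with the openness statements implicit in Lemmas~\ref{lem:state-following-1-step} and~\ref{lem:state-following-adjust-basis}, intersected over the $K$ steps.

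The main point requiring care---and the only real obstacle---is a bookkeeping one: verifying that at every intermediate step $j$ along the trajectory, being on the set where $\hSall^{1.6}(K)$ holds actually supplies the hypotheses needed to invoke Lemmas~\ref{lem:state-following-1-step}--\ref{lem:state-following-adjust-basis} at that step (i.e.\ that $\hSsolve^{1.6}(H_N^{(j)},\bsig^{(j)})$ and $\hSbdd^{1.6}(H_N^{(j)},H_N^{(j+1)})$ hold with the \emph{current} iterate $\bsig^{(j)}$ produced by the iteration, not an a priori guess), and that the constants genuinely do not depend on $N$. The former follows by unwinding the definition of $\hSall^{1.6}(K)$, which is precisely the conjunction of these per-step events along the constructed trajectory; the latter is immediate since $d,\iota,K$ and all Lipschitz constants from Subsection~\ref{subsec:1-step} are dimension-free and we compose only $O(K)$ of them.
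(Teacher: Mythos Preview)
Your approach is exactly the paper's—a direct induction on $k$ using Lemmas~\ref{lem:state-following-local-lipschitz-1-step} and~\ref{lem:state-following-adjust-basis}—and your write-up fills in the affine-map and bookkeeping details the paper's one-line proof omits.

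One step deserves more care than you give it: the bilinear map $(\wt u^{(j)}, v^{(j,\cdot)}) \mapsto u^{(j)}=\sum_i \wt u^{(j)}_i v^{(j,i)}$ is \emph{not} locally Lipschitz in $v^{(j,\cdot)}$ with dimension-free constant, since $|\wt u^{(j)}_i|$ can be of order $\sqrt{N}$; taken at face value your induction would pick up a factor $\sqrt{N}$ at each step. What rescues the argument (at least for Lipschitz dependence on $H_N$ with $\omega$ fixed, which is all Proposition~\ref{prop:BOGP-input} needs) is that the basis vectors $v^{(j,i)}$ are in fact $O(N^{-1/2})$-Lipschitz in the Hamiltonians and in $\bsig^{(j)}$: this follows from Proposition~\ref{prop:gradstable} at $k=2$ combined with Davis--Kahan as in the \emph{proof} of Lemma~\ref{lem:state-following-adjust-basis}, and is stronger than that lemma's stated $L_{\ref{lem:state-following-adjust-basis}}(d,\iota)$ bound. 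The $N^{-1/2}$ cancels the $\sqrt{N}$ from the bilinear map, so tracking the two scales separately ($M_j^{\sigma}$ for $\bsig^{(j)}$ and $M_j^{v}$ for $v^{(j,\cdot)}$, with $M_j^{v}=O(N^{-1/2})$) closes the induction with a dimension-free constant for $\bsig^{(K)}$. The paper's own proof is equally silent on this scaling.
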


\begin{proof}
    Using Lemmas~\ref{lem:state-following-local-lipschitz-1-step} and \ref{lem:state-following-adjust-basis}, it can be shown by a direct induction on $k$ that each $\bsig^{(k)}$ is locally Lipschitz on the domain as claimed.
\end{proof}

We now conclude this subsection by showing that $\cA_N^{\LocLip}$ finds wells with positive probability under the assumptions of Theorem~\ref{thm:well-reduction} (but for fixed $(d,\iota)$).
The only remaining task for the next subsection will be to extend the restriction $\cA_N^{\LocLip}|_{\hSall^{1.6}(K)}$ to a globally Lipschitz function.

\begin{corollary}
\label{cor:random-u-pos-prob}
Fix $(\gamma,d,\iota,\delta,S,\eta)$ as in \eqref{eq:parameter-order}.
Suppose the asymptotically $S$-stable algorithms $(\cA_{N})_{N\geq 1}$ satisfy
\[
\limsup_{N\to\infty}
\bbP[\cA_{N}(H_{N})\in W(\gamma,\delta,d,\iota;H_{N})]\geq \eta>0.
\]
For $\alpha=\alpha(\gamma,d,\iota,\delta,S,\eta,\eps,K)>0$,
\[
\limsup_{N\to\infty}
\bbP\Big[\hSall^{1.3}(K)\cap
\{\cA_N^{\LocLip}(H_N,\omega)\in W(\gamma/3,\delta^{1/4};H_N)\}\Big]\geq \alpha.
\]
\end{corollary}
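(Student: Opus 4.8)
The plan is to combine the success-and-stability estimate of Lemma~\ref{lem:success-and-stability}, the existence of good translation vectors from Lemma~\ref{lem:multi-step-state-following-makes-sense}, and the fact that the auxiliary vectors $\wt u^{(0)},\dots,\wt u^{(K-1)}$ entering the definition of $\cA_N^{\LocLip}$ land in the relevant ``good'' sets with probability bounded below uniformly in $N$. Throughout, parameters obey the ordering \eqref{eq:parameter-order}, and it suffices to work along a subsequence of $N$ realizing the $\limsup$ in the hypothesis.

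\emph{Step 1: a constant-probability lower bound on $\Sall(K;\cA_N)$.} Along such a subsequence, $\psolve=\bbP[\cA_N(H_N^{(0)})\in W(\gamma,\delta,d,\iota;H_N^{(0)})]\geq \eta/2$, since $H_N^{(0)}$ is marginally a $p$-spin Hamiltonian and so $\psolve$ equals the probability in the hypothesis. By asymptotic $S$-stability and \eqref{eq:punstable-bound}, $\punstable\leq S\eps/\delta^2$ for $N$ large, and since $\eps$ is small as in \eqref{eq:parameter-order} we may assume $S\eps/\delta^2<\eta^2/8$, hence $(\psolve^2-\punstable)_+\geq \eta^2/8$. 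Lemma~\ref{lem:success-and-stability} then gives $\bbP[\Sall(K;\cA_N)]\geq (\eta^2/8)^{2K}-(2K+1)e^{-cN}$. Combining with $\bbP[H_N\in K_N]\geq 1-e^{-cN}$ from Proposition~\ref{prop:gradients-bounded}, the event $E:=\Sall(K;\cA_N)\cap\{H_N\in K_N\}$ satisfies $\bbP[E]\geq \alpha_0:=\tfrac12(\eta^2/8)^{2K}>0$ for all large $N$ along the subsequence, where $\alpha_0$ depends only on $(\eta,K)$.

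\emph{Step 2: the random translation vectors are good with constant probability.} Condition on the $\sigma$-algebra $\cF$ generated by everything except $\wt u^{(0)},\dots,\wt u^{(K-1)}$, namely by $H_N$, $H_N^{(0)}$, $\omega^*$, and $\bG^{(1)},\dots,\bG^{(K)}$; this determines the ensemble $(H_N^{(0)},\dots,H_N^{(K)})$ and the event $E$. On $E$, Lemma~\ref{lem:multi-step-state-following-makes-sense} and its proof show that, running the iteration defining $\cA_N^{\LocLip}$, at each step $0\leq j\leq K-1$ there is a target $w^{(j)}:=P_{U_{\iota,j}}T_{\bsig^{(j)}}^{-1}(\cA(H_N^{(j+1)}))\in U_{\iota,j}$ with $\|w^{(j)}\|\leq O_{d,\iota}(\delta^{0.9})\sqrt N\ll\sqrt N$, such that \emph{any} $\wt u^{(j)}$ whose image $u^{(j)}=\sum_{i} \wt u^{(j)}_i v^{(j,i)}$ lies within $\delta^2\sqrt N$ of $w^{(j)}$ satisfies \eqref{eq:correct-u-for-next-step} and lets the induction of that lemma proceed. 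Since $\wt u\mapsto u$ is a linear isometry, the good set for $\wt u^{(j)}$ is a Euclidean ball of radius $\delta^2\sqrt N$ centered at the coordinate vector of $w^{(j)}$; because $\|w^{(j)}\|\ll\sqrt N$ for $\delta$ small, this ball lies inside the sampling domain $\{\|\wt u\|<\sqrt N\}\subseteq\bbR^d$, so $\wt u^{(j)}$ (uniform there, independent of $\cF$ and of $\wt u^{(0)},\dots,\wt u^{(j-1)}$) lands in it with conditional probability $\delta^{2d}$. Iterating over $j$ by sequential conditioning — noting that ``good at step $j$'' presupposes ``good at steps $<j$'', which by Lemma~\ref{lem:multi-step-state-following-makes-sense}\ref{it:good-approximation} keeps $\bsig^{(j)}$ and the basis $(v^{(j,i)})$ well-defined with $\|\bsig^{(j)}-\cA_N(H_N^{(j)})\|\leq\delta^{0.9}\sqrt N$ — the event $G$ that all $\wt u^{(0)},\dots,\wt u^{(K-1)}$ are good satisfies $\bbP[E\cap G]\geq \beta\,\bbP[E]$ with $\beta:=\delta^{2dK}>0$ independent of $N$.

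\emph{Step 3: conclusion.} On $E\cap G$, Lemma~\ref{lem:multi-step-state-following-makes-sense} yields that $\hSall^{1.2}(K)$ holds for the state-following iterates (so $\cA_N^{\LocLip}(H_N,\omega)=\bsig^{(K)}$ is well-defined) and that $\bsig^{(K)}\in W(\gamma/2,\delta^{1/3};H_N)$. Since $W^\tau(\gamma,\delta,d,\iota;H_N)$ and $\tau K_N$ are nondecreasing in $\tau$, $\hSall^{1.2}(K)$ implies $\hSall^{1.3}(K)$; and since $W(\gamma,\delta;H_N)$ is nonincreasing in $\gamma$ and nondecreasing in $\delta$, $W(\gamma/2,\delta^{1/3};H_N)\subseteq W(\gamma/3,\delta^{1/4};H_N)$. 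Hence $E\cap G\subseteq \{\hSall^{1.3}(K)\}\cap\{\cA_N^{\LocLip}(H_N,\omega)\in W(\gamma/3,\delta^{1/4};H_N)\}$, so the probability in the corollary is at least $\bbP[E\cap G]\geq \alpha_0\beta=:\alpha>0$ for all large $N$ along the subsequence, giving $\limsup_{N\to\infty}\geq\alpha$.

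The one delicate point is Step~2: extracting from the purely existential Lemma~\ref{lem:multi-step-state-following-makes-sense} the quantitative fact that the good set of translation vectors at each step has relative measure at least $\delta^{2d}$, uniformly in $N$ and in the history. This relies on the $\delta^2\sqrt N$ slack built into \eqref{eq:correct-u-for-next-step}, on the bound $\|w^{(j)}\|\ll\sqrt N$ valid on $E$ (which places the good ball inside the sampling domain), and on carrying the sequential conditioning through the inductive structure of that lemma's proof.
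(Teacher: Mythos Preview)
Your proof is correct and follows the same overall strategy as the paper: use Lemma~\ref{lem:success-and-stability} with \eqref{eq:punstable-bound} to obtain $\bbP[\Sall(K;\cA_N)]\geq\alpha_0$, invoke Lemma~\ref{lem:multi-step-state-following-makes-sense} to exhibit good translation vectors, and then argue that the random $\wt u^{(j)}$ hit a suitable neighborhood with probability bounded away from zero uniformly in $N$.

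The one substantive difference is in how you obtain the neighborhood of good translation vectors. The paper fixes the specific $\wt u^{(j)}$ guaranteed by Lemma~\ref{lem:multi-step-state-following-makes-sense}, then appeals to the local Lipschitz property of the full state-following map (Lemma~\ref{lem:loc-lip-property}) and of $\tau^*$ (Proposition~\ref{prop:lenience-lipschitz}) to conclude that any $\wt u^{(j)}$ within $\rho\sqrt{N}$ of these yields an output in $W(\gamma/3,\delta^{1/4};H_N)$ and satisfies $\hSall^{1.3}(K)$. You instead unpack the proof of Lemma~\ref{lem:multi-step-state-following-makes-sense} and observe that its induction goes through for \emph{any} $u^{(j)}$ satisfying \eqref{eq:correct-u-for-next-step}, which directly furnishes a ball of radius $\delta^2\sqrt N$ at each step; you then use the trivial monotonicities $\hSall^{1.2}(K)\subseteq\hSall^{1.3}(K)$ and $W(\gamma/2,\delta^{1/3})\subseteq W(\gamma/3,\delta^{1/4})$ rather than Lipschitz continuity. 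Your route is slightly more elementary (it avoids Lemma~\ref{lem:loc-lip-property} entirely) but relies on reading Lemma~\ref{lem:multi-step-state-following-makes-sense} as establishing the stronger ``for all $u$ satisfying \eqref{eq:correct-u-for-next-step}'' statement rather than mere existence; you correctly flag this, and it is justified since Lemma~\ref{lem:state-following-works-1-step} applies to any such $u$.
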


\begin{proof}
First, for parameters as in \eqref{eq:parameter-order}, the estimate \eqref{eq:punstable-bound} implies via Lemma~\ref{lem:success-and-stability} that 
\[
\limsup_{N\to\infty}
\bbP[\Sall(K;\cA_N)]\geq \alpha_0(\gamma,\delta,S,\eta,\eps,K)>0.
\]
On this event, Lemma~\ref{lem:multi-step-state-following-makes-sense}\ref{it:state-following-succeeds} implies there exist $\wt u^{(1)},\dots,\wt u^{(K)}$ with $\cA_N^{\LocLip}(H_N,\omega)\in W(\gamma/2,\delta^{1/3};H_N)$.
By Lemma~\ref{lem:loc-lip-property}, to have $\cA_N^{\LocLip}(H_N,\omega)\in W(\gamma/3,\delta^{1/4};H_N)$ it suffices for each $\wt u^{(1)},\dots,\wt u^{(K)}$ to be within $\rho\sqrt{N}$ from the choices identified in Lemma~\ref{lem:multi-step-state-following-makes-sense}, for some $\rho(\gamma,d,\iota,\delta,S,\eta,\eps,K)>0$.
(In particular Lemmas~\ref{lem:multi-step-state-following-makes-sense}\ref{it:lemmas-apply} and \ref{lem:loc-lip-property} easily imply that such $(\wt u^{(1)},\dots,\wt u^{(K)})$ satisfy $\hSall^{1.3}(K)$, since as in the proof of Proposition~\ref{prop:lenience-lipschitz}, each $\tau^*_{\times}$ is Lipschitz in all relevant quantities.)
These $\rho\sqrt{N}$ approximations hold with uniformly positive probability (i.e. not tending to zero with $N$) as each $\wt u^{(k)}$ is uniformly random in a $d$-dimensional ball of radius $\sqrt{N}$.
\end{proof}

\subsection{Finishing the Proof: Globally Lipschitz State Following}

We start with an elementary fact.

\begin{proposition}
\label{prop:locally-Lipschitz-global}
If $F:\bbR^M\to\bbR^m$ is locally $L$-Lipschitz at all $x\in\bbR^M$,
then $F$ is globally $L$-Lipschitz.
\end{proposition}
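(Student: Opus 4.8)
The plan is a standard connectedness/covering argument: local Lipschitz control is an infinitesimal statement, and we upgrade it to a global one by integrating along straight-line segments. First I would fix two points $x,y\in\bbR^M$ and consider the segment $\gamma(t)=(1-t)x+ty$ for $t\in[0,1]$. For each $t$, local $L$-Lipschitzness at $\gamma(t)$ gives an open interval $I_t\ni t$ such that $\|F(\gamma(s))-F(\gamma(s'))\|\leq L\|\gamma(s)-\gamma(s')\| = L\|x-y\|\,|s-s'|$ for all $s,s'\in I_t$ (shrinking the neighborhood of $\gamma(t)$ to one contained in the claimed Lipschitz ball, then pulling back along $\gamma$). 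By compactness of $[0,1]$ we extract a finite subcover $I_{t_1},\dots,I_{t_n}$, and by the Lebesgue number lemma (or just by refining to a chain of overlapping intervals) we get a partition $0=s_0<s_1<\dots<s_m=1$ such that each $[s_{j-1},s_j]$ lies in a single $I_{t_k}$.

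The second step is to telescope. On each subinterval we have $\|F(\gamma(s_j))-F(\gamma(s_{j-1}))\|\leq L\|x-y\|(s_j-s_{j-1})$, so summing over $j$ and using the triangle inequality,
\[
\|F(y)-F(x)\|
\leq
\sum_{j=1}^m \|F(\gamma(s_j))-F(\gamma(s_{j-1}))\|
\leq
L\|x-y\|\sum_{j=1}^m (s_j-s_{j-1})
=
L\|x-y\|.
\]
Since $x,y$ were arbitrary, $F$ is globally $L$-Lipschitz.

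There is no real obstacle here; the only point requiring a modicum of care is ensuring that the local Lipschitz neighborhoods, which are balls in $\bbR^M$, pull back to genuine open sets along the segment and that the overlaps are handled so that the constant stays exactly $L$ (not $L$ times the number of pieces). This is automatic because within a single $I_{t_k}$ the bound $L\|x-y\|\,|s-s'|$ is the full Lipschitz estimate on that piece, and telescoping a convex combination of these never inflates the constant. One could alternatively phrase the same argument via the connectedness of $\bbR^M$: the relation "$\|F(x')-F(x)\|\le L\|x'-x\|$" propagates along any path, and $\bbR^M$ is path-connected, but the explicit segment-partition version above is the cleanest to write.
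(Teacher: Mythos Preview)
Your proof is correct and follows essentially the same approach as the paper: cover the straight-line segment between $x$ and $y$ by finitely many open sets on which $F$ is $L$-Lipschitz (via compactness), then telescope along a partition subordinate to this cover. The paper's version simply omits the explicit partition-and-sum step that you spell out.
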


\begin{proof}
    For any $x,y\in\bbR^M$ the line segment connecting them is compact. 
    Hence there exists a finite collection of open sets $U_1,\dots,U_J\subseteq\bbR^M$ covering this line segment, such that $F$ is $L$-Lipschitz on each.
    This easily shows that $\|F(x)-F(y)\|\leq L\|x-y\|$.
    Since $x,y$ were arbitrary this completes the proof.
\end{proof}

\begin{proposition}
\label{prop:rescaled-global-lipschitz}
There exists $\cA_N^{\Lip}:\sH_N\times\Omega_N\to \cB_N$ such that:
\begin{itemize}
    \item If $\hSall^{1.3}(K)$ holds for $(H_N^{(0)},\dots,H_N^{(K)},\bsig^{(0)},\dots,\bsig^{(K)})$as constructed in the previous subsection, then $\cA_N^{\Lip}(H_N,\omega)=\cA_N^{\LocLip}(H_N,\omega)\in\cS_N$.
    \item $\cA_N^{\Lip}$ is globally $L=L(\gamma,d,\iota,\delta,S,\eta,\eps,K)$-Lipschitz. 
\end{itemize} 
\end{proposition}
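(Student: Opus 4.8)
The plan is to rescale the locally Lipschitz partial algorithm $\cA_N^{\LocLip}$ toward the origin by a scalar cutoff that is forced to vanish before the state-following construction can degenerate, exactly as sketched in Subsection~\ref{subsec:overview}. Throughout I fix the auxiliary randomness $\omega$ and treat everything as a function of $H_N\in\sH_N$ alone; recall that the Hamiltonians $H_N^{(0)},\dots,H_N^{(K)}$ built from $(H_N,\omega)$ are affine in $H_N$, with $H_N^{(0)}$ fixed, hence globally $O_{K,\eps}(1)$-Lipschitz. Write $\cD_\omega\subseteq\sH_N$ for the set on which the $K$ state-following steps all run with the hypotheses of Lemmas~\ref{lem:state-following-1-step} and \ref{lem:state-following-adjust-basis} satisfied and $\hSall^{1.6}(K)$ holds, so that $\cA_N^{\LocLip}=\bsig^{(K)}$ is defined. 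Since $K_N$ is open and the valid inputs of each single step form an open set (Lemma~\ref{lem:state-following-local-lipschitz-1-step}), $\cD_\omega$ is open; on $\cD_\omega$ the iterates $\bsig^{(0)},\dots,\bsig^{(K)}$ are locally $O(1)$-Lipschitz in $H_N$ (Lemma~\ref{lem:loc-lip-property}), and since every defining inequality of $\hSall^{1.6}(K)$ is strict (the spectral band is avoided with a positive margin, etc.), the infimum $\tau^*_{\all}$ satisfies $\tau^*_{\all}<1.6$ there. As $\tau^*_{\all}$ is an $L_{\ref{prop:lenience-lipschitz}}/\sqrt N$-Lipschitz function of $(H_N^{(0)},\dots,H_N^{(K)},\bsig^{(0)},\dots,\bsig^{(K)})$ by Proposition~\ref{prop:lenience-lipschitz}, it is locally $O(1/\sqrt N)$-Lipschitz in $H_N$ on $\cD_\omega$; all of these objects are jointly Borel in $(H_N,\omega)$.

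I would then fix a globally $O(1)$-Lipschitz cutoff $\psi:\bbR\to[0,1]$ with $\psi\equiv 1$ on $(-\infty,1.3]$ and $\psi\equiv 0$ on $[1.35,\infty)$, and define
\[
\cA_N^{\Lip}(H_N,\omega)=\psi\big(\tau^*_{\all}\big)\cdot\cA_N^{\LocLip}(H_N,\omega)\ \text{ on }\cD_\omega,\qquad \cA_N^{\Lip}(H_N,\omega)=0\ \text{ on }\sH_N\setminus\cD_\omega.
\]
This is well-defined and Borel measurable; it is $\cB_N$-valued because $\psi\in[0,1]$ and $\|\cA_N^{\LocLip}\|=\sqrt N$; and when $\hSall^{1.3}(K)$ holds we have $\tau^*_{\all}\le 1.3$, so $\psi(\tau^*_{\all})=1$ and $\cA_N^{\Lip}=\cA_N^{\LocLip}\in\cS_N$, as demanded. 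It then suffices to show that at each fixed $\omega$ the map $H_N\mapsto\cA_N^{\Lip}(H_N,\omega)$ is locally $L$-Lipschitz on all of $\sH_N$ for some $L=L(\gamma,d,\iota,\delta,S,\eta,\eps,K)$; Proposition~\ref{prop:locally-Lipschitz-global} then upgrades this to global $L$-Lipschitzness and completes the proof.

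Local Lipschitzness is immediate on the open set $\cD_\omega$: there $\cA_N^{\Lip}$ is the product of $\psi(\tau^*_{\all})$ — bounded by $1$ and locally $O(1/\sqrt N)$-Lipschitz — with $\cA_N^{\LocLip}$ — of norm exactly $\sqrt N$ and locally $O(1)$-Lipschitz — so the product rule gives local Lipschitz constant $\sqrt N\cdot O(1/\sqrt N)+1\cdot O(1)=O(1)$. (This is the one place the $1/\sqrt N$ gain of Proposition~\ref{prop:lenience-lipschitz} is essential: it cancels the $\sqrt N$ scale of the iterates.) On $\sH_N\setminus\cD_\omega$ we have $\cA_N^{\Lip}\equiv 0$, so it is enough to prove that $\cA_N^{\Lip}$ vanishes on a neighborhood of every such point, and for this I would establish the boundary behavior
\[
\tau^*_{\all}(H_N)\longrightarrow 1.6\qquad\text{as }H_N\to\partial\cD_\omega\text{ from within }\cD_\omega.
\]
Granting this, for $H_N^x\in\sH_N\setminus\cD_\omega$ every nearby $H_N$ is either outside $\cD_\omega$, where $\cA_N^{\Lip}=0$, or inside $\cD_\omega$ but $o(1)$-close to $\partial\cD_\omega$ (since $\cD_\omega$ is open and $H_N^x$ lies in its complement), where then $\tau^*_{\all}(H_N)>1.35$ and again $\cA_N^{\Lip}(H_N)=\psi(\tau^*_{\all})\cdot\cA_N^{\LocLip}=0$; hence $\cA_N^{\Lip}\equiv 0$ near $H_N^x$.

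I expect this boundary claim to be the main obstacle. To prove it I would take $H_N\to H_N^x\in\partial\cD_\omega$ with $H_N\in\cD_\omega$, pass to a subsequence along which all iterates converge (by compactness of $\cS_N$), and let $j_0$ be the smallest index at which the construction first becomes non-strict in the limit: either some constraint of $\hSbdd^{1.6}$ at $H_N^{(j_0)},H_N^{(j_0+1)}$ is tight, or $\bsig^{(j_0)}$ lands on $\partial W^{1.6}(\gamma,\delta,d,\iota;H_N^{(j_0)})$ (allowing $j_0=K$). Such $j_0$ exists because $H_N^x\notin\cD_\omega$, and for $j<j_0$ the step-$j$ hypotheses hold with slack at the limit, so Lemma~\ref{lem:state-following-local-lipschitz-1-step} makes the maps producing $\bsig^{(1)},\dots,\bsig^{(j_0)}$ continuous at $H_N^x$. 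The key observation is that every condition defining $\hSall^\tau(K)$ is controlled by a quantity that is strictly monotone and non-degenerate in $\tau\in[1,1.6]$ — the thresholds $\gamma/\tau$ and $\delta^{1/\tau}$, the excluded spectral band $\pm[\tau\iota,3\iota/\tau]$, and the scaled sets $\tau K_N$, cf.\ \eqref{eq:W-tau-def} — and $\hSall^\tau(K)$ is monotone increasing in $\tau$. Hence a constraint that holds with equality at $(\tau,H_N)=(1.6,H_N^x)$ is strictly violated at any fixed $\tau<1.6$ for all $H_N\in\cD_\omega$ sufficiently close to $H_N^x$, which forces $\tau^*_{\all}(H_N)\to 1.6$. (One has to run through the short list of which defining condition is binding — and handle the $\dim U_{\tau\iota}=d$ condition, which can be traded for the neighboring spectral-gap condition — but each case is routine given the explicit formulas.) The takeaway is that the rescaled algorithm glues continuously, via the trivial value $0$, across $\partial\cD_\omega$ precisely because $\tau^*_{\all}$ saturates at $1.6$ there.
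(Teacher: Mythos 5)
Your proof takes essentially the same route as the paper: multiply $\cA_N^{\LocLip}$ by a Lipschitz cutoff of $\tau^*_{\all}$ (using the crucial $1/\sqrt{N}$ gain from Proposition~\ref{prop:lenience-lipschitz} to cancel the $\sqrt{N}$ scale of the output in the product rule), then upgrade local to global Lipschitzness via Proposition~\ref{prop:locally-Lipschitz-global}. The paper works with the explicit cutoff $a^*=\max(0,\min(1,14-10\tau^*))$ and splits on $\tau^*\geq 1.5$ versus $\tau^*\leq 1.5$ rather than on $H_N\in\cD_\omega$ versus not, having already extended $\tau^*$ to equal $1.6$ wherever the iteration fails; the two case analyses are formally equivalent. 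Where you go further is in isolating and arguing the boundary claim $\tau^*_{\all}\to 1.6$ near $\partial\cD_\omega$ — this is precisely what justifies the paper's asserted-but-not-argued step that $a^*$ vanishes on a neighborhood of any input with $\tau^*\geq 1.5$ (the Lipschitz bound on $\tau^*$ from Proposition~\ref{prop:lenience-lipschitz} only applies directly where the trajectory is defined, so some continuity across $\partial\cD_\omega$ is genuinely needed), and your sketch via the first binding index $j_0$, strict monotonicity of the constraints in $\tau$, and continuity of the first $j_0$ iterates is the right shape of argument to fill that in.
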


\begin{proof}
We define $\cA_N^{\Lip}$ to be a radial rescaling of $\cA_N^{\LocLip}$.
With $\tau^*=\tau^*_{\all}(H_N^{(0)},\dots,H_N^{(K)},\bsig^{(0)},\dots,\bsig^{(K)})$ as in
Proposition~\ref{prop:lenience-lipschitz} and $a^*=\max(0,\min(1,14-10\tau^*))$, we let:
\[
\cA_N^{\Lip}(H_N,\omega)
=
a^*\cA_N^{\LocLip}(H_N,\omega)\in\cB_N.
\]
Note that although $\cA_N^{\LocLip}$ is not defined on all inputs, whenever it is not defined we have $\tau^*=1.6$ so $a^*=0$ and then $\cA_N^{\Lip}(H_N,\omega)=0\in\cB_N$.
With this notational understanding, $\cA_N^{\Lip}$ is thus a globally defined algorithm.
The first claim holds by definition since $14-10\cdot 1.3=1$.

Turning to the Lipschitz constant, we will show $\cA_N^{\Lip}$ is locally $O(1)$-Lipschitz in $H_N$ at every input $(H_N,\omega)$, which suffices by Proposition~\ref{prop:locally-Lipschitz-global}.
First suppose $\tau^*\geq 1.5$.
Then $a^*=0$ and so $\cA_N^{\Lip}(\wh H_N,\omega)=0\in\cB_N$ for all $\wh H_N$ in a neighborhood of $H_N$, so clearly $\cA_N^{\Lip}$ is locally Lipschitz.

Next suppose $\tau^*\leq 1.5$.
Then $\cA_N^{\LocLip}$ is $\sqrt{N}$-bounded and locally $O(1)$-Lipschitz by Lemma~\ref{lem:loc-lip-property}.
Meanwhile $\tau^*$ is $O(1)$-bounded, and is locally $O(N^{-1/2})$ Lipschitz by Proposition~\ref{prop:lenience-lipschitz} and the local Lipschitz dependencies in Lemmas~\ref{lem:state-following-local-lipschitz-1-step} and \ref{lem:state-following-works-1-step} and \ref{lem:loc-lip-property}.
Since the product of respectively $B_1,B_2$-bounded and $L_1,L_2$-Lipschitz functions has Lipschitz constant at most $B_1 L_2 + B_2 L_1$, the second claim follows.
\end{proof}

\begin{proof}[Proof of Theorem~\ref{thm:well-reduction}]
By the pigeonhole principle, there exists $1\leq j\leq J$ as in Proposition~\ref{prop:well-types} such that 
\[
\limsup_{N\to\infty}
\bbP[\cA_N(H_N)\in W(\gamma,\delta,d_j,\iota_j;H_N)]\geq \frac{\eta}{2J}>0.
\]
Using $\eta/2J$ in place of $\eta$ in \eqref{eq:parameter-order} and combining Corollary~\ref{cor:random-u-pos-prob} with Proposition~\ref{prop:rescaled-global-lipschitz}, we find that 
\[
\limsup_{N\to\infty}\bbP[\cA_N^{\Lip}(H_N)\in W(\gamma/3,\delta^{1/4};H_N)]
>0.
\]
This contradicts Proposition~\ref{prop:BOGP-input}, completing the proof.
\end{proof}


\section{Deferred Proofs}
\label{sec:deferred-proofs}

\begin{proof}[Proof of Proposition~\ref{prop:GD-finds-well}]
Choosing $\eta$ small compared to $C_{\ref{prop:gradstable}}$ and using Proposition~\ref{prop:gradients-bounded}, one finds:
\begin{align*}
\frac{H_N(\bsig^{(i+1)})}{N}
&\geq 
\frac{H_N(\bsig^{(i)})}{N}
+
\frac{\eta\|\nabla_{\sph}H_N(\bsig^{(i)})\|^2}{N}
-
O\lt(\frac{C_{\ref{prop:gradstable}}\eta^2\|\nabla_{\sph}H_N(\bsig^{(i)})\|^2}{N}\rt)
\\
&\geq 
\frac{H_N(\bsig^{(i)})}{N}
+
\frac{\eta\|\nabla_{\sph}H_N(\bsig^{(i)})\|^2}{2N}.
\end{align*}
Choose $I=\lceil 10C_{\ref{prop:gradstable}}\delta^{-2}\eta^{-1}\rceil$ and let $1\leq i\leq I$ be the first time that 
$\|\nabla_{\sph}H_N(\bsig^{(i)})\|\leq \delta$.
Such $i$ exists because if not, we would have $H_N(\bsig^{(I)})>C_{\ref{prop:gradstable}}\geq \max_{\bsig\in\cS_N} \frac{H_N(\bsig)}{N}$ (from the $i=0$ case of Proposition~\ref{prop:gradstable}).
Then 
\[
H_N(\bsig^{(i)})/N\geq H_N(\bsig^{(0)})/N\geq \ALG+\eps.
\]
Using Proposition~\ref{prop:wells-energy-for-pure}, it follows that $\bsig^{(i)}\in W(\gamma,\delta;H_N)$ for $\gamma\geq \Omega(E-\ALG(p))>0$.

The second assertion is a simple discrete Gronwall argument using Proposition~\ref{prop:gradients-bounded}.
Namely, suppose $H_N,\wt H_N\in K_N$ are $(1-\eps)$-correlated as in \eqref{eq:HN-wtHN}.
Letting $\wt\bsig^{(0)}=\cA_N(\wt H_N)$, we have for $N$ large:
\[
\bbE[\|\bsig^{(0)}-\wt \bsig^{(0)}\|^2]\leq S\eps N.
\]
Let $R^{(j)}=\|\bsig^{(j)}-\wt\bsig^{(j)}\|/\sqrt{N}$ for $0\leq j\leq K$.
Assuming $H_N,\wt H_N,H_N'\in K_N$, Proposition~\ref{prop:gradstable} yields
\[
\|\nabla H_N(\bsig)-\nabla \wt H_N(\wt\bsig)\|
\leq 
O(\|H_N-\wt H_N\|+\|\bsig-\wt\bsig\|)
\leq 
O(\sqrt{\eps N}+\|\bsig-\wt\bsig\|).
\]
for all $\bsig,\wt\bsig\in\cS_N$. 
Observing that $\|\bsig^{(i)}+\eta\nabla_{\sph} H_N(\bsig^{(i)}\|\geq \sqrt{N}$ at each stage and that $\bsig\mapsto\frac{\bsig\sqrt{N}}{\|\bsig\|}$ is a contraction on the complement of $\cB_N$, one obtains the recursion
\[
R^{(i+1)}
\leq 
R^{(i)}\cdot (1+O(\eta))+ O(\sqrt{\eps}).
\]
Therefore $\max_{1\leq i\leq I} R^{(j)}\leq (1+O(\eta))^{I} (R^{(0)}+\sqrt{\eps})$. 
Since the event $H_N,\wt H_N,H_N'\in K_N$ has probability $1-e^{-cN}$, we conclude that 
\[
\max_{1\leq i\leq I}
\bbE[(R^{(i)})^2]
\leq 
(1+O(\eta))^{2I} S\eps 
+
e^{-\Omega(N)}.
\]
This shows each $\bsig^{(i)}$ is asymptotically $S'(S,I,\eta)$-stable (since the $e^{-\Omega(N)}$ term becomes negligible for any fixed $\eps>0$ and large $N$).
\end{proof}

\begin{proof}[Proof of Proposition~\ref{prop:geodesics-smooth}]
    We recall that the Riemannian exponential map $T_{\bsig}(u)$ is $C^{\infty}$ jointly in $(\bsig,u)$, see e.g. \cite[Proof of Lemma 5.12]{lee2006riemannian}. This implies that $F_1$ is infinitely differentiable since projections are smooth.
    To quantitatively estimate the derivatives of $F_1$, we simply note that the $k$-th derivative of $F_1$ is defined by $\bsig,u$ and vectors $v_1,\dots,v_k\in \bsig^{\perp}$ and $w_1,\dots,w_k\in\bbR^N$, tangent to $\bsig$ and $u$ respectively.
    By restricting to the linear span of these vectors, we see that the $k$-th derivative tensor of $F_1$ can be expressed using only the geometry of an $O(1)$-dimensional sphere.
    Hence there is no dimension-dependence if one rescales this sphere to have diameter $1$ rather than $\sqrt{N}$ (since the computation becomes entirely independent of $N$).
    This yields the smoothness estimates on $F_1$.
    The smoothness estimates on $F_2$ follow by repeated differentiation using the definition of $K_N$.
\end{proof}

\begin{proof}[Proof of Lemma~\ref{lem:state-following-1-step}]
First we argue existence.
Consider the function 
\begin{equation}
\label{eq:F-def}
F(y)=\wt H_N(T_{\bsig}(y)), \quad \forall \,y\in\bsig^{\perp}.
\end{equation}
Then writing $P=P_{U_{\iota}(\bsig;H_N)}$ and $P^{\perp}=P_{U_{\iota}(\bsig;H_N)}^{\perp}$, we have from \eqref{eq:geodesic-derivatives}:
\begin{align}
\nabla F(0)= 
\nabla_{\sph} \wt H_N (\bsig),
\quad\quad 
\nabla^2 F(0)= 
\nabla_{\sph}^2 \wt H_N(\bsig) 
.
\end{align}
For any $u\in U_{\iota}(\bsig;H_N)$ with $\|u\|\leq\delta''\sqrt{N}$, we find using Proposition~\ref{prop:gradients-bounded} to control Taylor expansion errors:
\begin{align*}
    \nabla F(u)
    =
    \nabla_{\sph} \wt H_N (\bsig)
    + 
    \nabla_{\sph}^2 \wt H_N(\bsig) 
    [u]
    +
    O((\delta'')^2 \sqrt{N})
    .
\end{align*}
(Here the last term is a vector with norm $O((\delta'')^2 \sqrt{N})$.)
Since $U_{\iota}(\bsig;H_N)$ is an eigenspace of $\nabla_{\sph}^2 H_N(\bsig)$,
\[
P^{\perp}\nabla_{\sph}^2 H_N(\bsig)u=0.
\]
Further, $\Sbdd^{1.6}(H_N,\wt H_N)$ implies 
\[
\|\nabla_{\sph}^2 H_N(\bsig)-\nabla_{\sph}^2 \wt H_N(\bsig)\|_{\op}\leq O(\sqrt{\eps}).
\]
Hence as $\eps$ is small compared to $\delta''$, we conclude
\begin{equation}
\label{eq:u-approx-stationary}
\|P^{\perp}\nabla F(u)\|
\leq
\|\nabla_{\sph}^2 H_N(\bsig)-\nabla_{\sph}^2 \wt H_N(\bsig)\|_{\op}\sqrt{N}
+
O((\delta'')^2 \sqrt{N})
\leq 
O(\delta \sqrt{N})
.
\end{equation}
On the other hand, by definition the restriction $\nabla_{\sph}^2 H_N(\bsig)|_{U^{\perp}_{\iota}(\bsig;H_N)\times U^{\perp}_{\iota}(\bsig;H_N)}$ has operator norm at most $C$ and all eigenvalues outside $[-3\iota,3\iota]$.
It follows from the event $\Sbdd^{1.6}(H_N,\wt H_N)$ that $\|\nabla_{\sph}^2 \wt H_N(\bsig) -\nabla_{\sph}^2 H_N(\bsig)\|_{\op}\leq O(\sqrt{\eps})\ll \iota$.
Hence $\nabla^2 F(0)|_{U^{\perp}_{\iota}(\bsig;H_N)\times U^{\perp}_{\iota}(\bsig;H_N)}$ has operator norm at most $2C$ and all eigenvalues outside $[-2\iota,2\iota]$.

In light of Proposition~\ref{prop:gradients-bounded} and \eqref{eq:u-approx-stationary}, a quantitative inverse function theorem around $u$ (ensuring locally quadratic convergence of Newton's method as in e.g.\ \cite[Theorem 5.3.2]{stoer2013introduction}) now implies existence of a suitable $y\in U_{\iota}^{\perp}(\bsig;H_N)$ with $\|y-u\|\leq O_{d,\iota}(\delta\sqrt{N})$, such that $P^{\perp}\nabla F(y)=0$. 
Setting $\wt \bsig(u)=T_{\bsig}(y)$ yields \eqref{eq:restricted-stationary-point}, completing the proof of existence.

Next we show $\wt\bsig(u)$ is unique.
As above, all eigenvalues of $\nabla^2 F(y)|_{U^{\perp}_{\iota}(\bsig;H_N)\times U^{\perp}_{\iota}(\bsig;H_N)}$ are outside $[-2\iota,2\iota]$.
Supposing for sake of contradiction that $y$ is not unique, let $y'\neq y$ satisfy $y'-y\in U_{\iota}^{\perp}(\bsig;H_N)$ and $\|y'-y\|\leq O_{d,\iota}(\delta\sqrt{N})$.
Then
\begin{align}
\notag
\|P^{\perp}\nabla F(y')\|
&\geq 
\|\nabla^2 F(y)[y'-y]\|-O(\iota\|y'-y\|^2)
\\
\notag
&\geq 
\iota \|y'-y\|-O(\iota\|y'-y\|^2)
\\
\label{eq:explicit-LB}
&\geq
\iota \|y'-y\|/2
\\
\notag
&>0.
\end{align}
This is a contradiction, concluding the proof of uniqueness.
\end{proof}

\begin{proof}[Proof of Lemma~\ref{lem:state-following-local-lipschitz-1-step}]
Suppose $H_N^*,\wt H_N^*,\bsig^*,u^*$ are respectively within some small $\beta_N\sqrt{N}$ of $H_N,\wt H_N,\bsig,u$.
(Here $\beta_N$ may be arbitrarily small, even depending on $N$.)
Write $\wt\bsig=\wt\bsig(u)$ and let $\wt\bsig^*$ be the corresponding point for $(H_N^*,\wt H_N^*,\bsig^*,u^*)$.
By Proposition~\ref{prop:davis-kahan}\ref{it:davis-kahan}, it follows that there are orthonormal bases $v_1,\dots,v_d$ and $w_1,\dots,w_d$ for respectively $U_{\iota}(\bsig;H_N)$ and $U_{\iota}(\bsig^*;H_N^*)$ such that $\|v_i-w_i\|\leq O_{d,\iota}(\beta_N)$ for each $1\leq i\leq d$.
Consider the functions $F_3,F_3^*:\bbR^d\cap\cB_N\to\bbR$ defined by:
\[
F_3(z)
=
\wt H_N\Big(T_{\bsig}\Big(u+\sum_{i=1}^d z_i v_i\Big)\Big),
\quad\quad
F_3^*(z)
=
\wt H_N^*\Big(T_{\bsig^*}\Big(u^*+\sum_{i=1}^d z_i w_i\Big)\Big).
\]
We claim that $F_3$ and $F_3^*$ are close in $C^2$ in the sense that for each $\|z\|<\sqrt{N}$ and $k\in \{0,1,2\}$ we have 
\begin{equation}
\label{eq:C3-approx}
\|\nabla^k F_3(z)-\nabla^k F_3^*(z)\|_{\op}
\leq 
O(\beta_N N^{1-\frac{k}{2}}).
\end{equation}
This follows from Proposition~\ref{prop:geodesics-smooth}: smoothness of $\nabla^{k+1}$ implies Lipschitzness of $\nabla^k$.
For instance in the $k=0$ case,
\begin{align*}
    \|F_3(z)-F_3^*(z)\|
    &\leq 
    \Big(\sup_{x\in\cS_N}
    \|\wt H_N(x)-\wt H_N^*(x)\|_{\op}
    \Big)
    \\
    &+
    \Big\|\wt H_N\Big(T_{\bsig}\Big(u+\sum_{i=1}^d z_i v_i\Big)\Big)
    -
    \wt H_N\Big(T_{\bsig^*}\Big(u+\sum_{i=1}^d z_i v_i\Big)\Big)\Big\|
    \\
    &+
    \Big\|\wt H_N\Big(T_{\bsig^*}\Big(u+\sum_{i=1}^d z_i v_i\Big)\Big)
    -
    \wt H_N\Big(T_{\bsig^*}\Big(u+\sum_{i=1}^d z_i w_i\Big)\Big)\Big\|
    .
\end{align*}
The first term is controlled by Proposition~\ref{prop:gradstable} with $k=1$, the second by Proposition~\ref{prop:geodesics-smooth}, and the third by combining Proposition~\ref{prop:geodesics-smooth} and Lemma~\ref{lem:state-following-adjust-basis}.
The cases $k=1,2$ are similar, using the chain rule to differentiate through the compositions.

Next we recall that $\wt\bsig(u)$ was constructed as $T_{\bsig}(y)$ where $y=u+z$ for $z\in U_{\iota}^{\perp}$ the locally unique and $\Omega_{d,\iota}(1)$-well conditioned stationary point of $z\mapsto \wt H_N(T_{\bsig}(u+z))$.
If \eqref{eq:C3-approx} holds for small $\beta_N$ depending on $(d,\iota)$, then by e.g.\ \cite[Theorem 5.3.2]{stoer2013introduction} the function $z^*\mapsto \wt H_N^*(T_{\bsig^*}(u^*+z^*))$ will have a locally unique well-conditioned stationary point $z^*$ with $\|z-z^*\|\leq O_{d,\iota}(\beta_N\sqrt{N})$.
Since Proposition~\ref{prop:geodesics-smooth} implies that $T$ is locally $O(1)$ Lipschitz in both arguments, it follows that 
\[
\|\wt\bsig-\wt\bsig^*\|
\leq 
\|T_{\bsig}(u+z)-T_{\bsig^*}(u^*+z^*)\|
\leq 
O(\|\bsig-\bsig^*\|+\|u-u^*\|+\|z-z^*\|)
\leq 
O_{d,\iota}(\beta_N\sqrt{N}).\qedhere
\]
\end{proof}

\begin{proof}[Proof of Lemma~\ref{lem:state-following-adjust-basis}]
Let $P=P_{U_{\iota}(\bsig;H_N)}$ and $\wt P=P_{U_{\iota}(\wt\bsig;\wt H_N)}$
By the Davis-Kahan theorem from Proposition~\ref{prop:davis-kahan}, we have:
\[
\|P-\wt P\|
\leq 
O\lt(\frac{\|\nabla_{\sph}^2 H_N(\bsig)-\nabla_{\sph}^2 \wt H_N(\wt\bsig)\|}{\iota\sqrt{N}}\rt)
\leq 
O\lt(\frac{\|H_N-\wt H_N\|}{\iota\sqrt{N}}\rt).
\]
(Here in the latter step we used Proposition~\ref{prop:gradients-bounded}, and that $u\mapsto\wt\bsig$ is locally Lipschitz by Lemma~\ref{lem:state-following-local-lipschitz-1-step}.)
We apply the Gram-Schmidt process to $(\wt P v_1,\dots,\wt P v_d)$ to arrive at an orthonormal basis $(w_1,\dots,w_d)$ for $U_{2\iota}(\wt\bsig(u);H_N^{(1)})$, with $\spann(\wt P v_1,\dots,\wt P v_j)=\spann(w_1,\dots,w_j)$ for all $j$.
It is well-known that the joint dependence of $(w_i)$ on $(\wt P v_i)$ is $O(1)$-Lipschitz in a neighborhood of any orthonormal basis, see e.g. \cite[Section 19.9]{higham2002accuracy}.
Since the Gram-Schmidt process is $d$-dimensional, this Lipschitz constant is independent of $N$.
Since compositions of locally Lipschitz functions are locally Lipschitz, the proof is complete.
\end{proof}

\begin{proof}[Proof of Lemma~\ref{lem:state-following-works-1-step}]
Let $y'=T_{\bsig}^{-1}(\cA(\wt H_N))$ and $y=T_{\bsig}^{-1}(\wt\bsig(u))$.
Since $T_{\bsig}$ is a diffeomorphism with uniformly bounded norm and inverse norm on $\|u\|\leq \sqrt{N}$ (independently of $N$), the condition $\Sall(H_N,\wt H_N;\cA_N)$ implies that $\|\nabla F(y')\|\leq O(\|\nabla\wt H_N(\wt\bsig(u))\|)\leq O(\delta)$.
(Here $F$ is as in \eqref{eq:F-def}.)
On the other hand, the lower bound \eqref{eq:explicit-LB} yields
\[
\|\nabla F(y')\|\geq \iota \|y'-y\|/2
\quad\implies\quad
\|y'-y\|\leq O(\delta).
\]
Applying $T_{\bsig}$ (which is $O(1)$-Lipschitz by Proposition~\ref{prop:geodesics-smooth}) to $y,y'$ yields the first line of \eqref{eq:state-following-works}.
The second line also follows (after adjusting $C'=\delta'/\delta$) since $\wt H_N\in K_N$ and $\|\nabla_{\sph} \wt H_N(\cA_N(\wt H_N))\|\leq\delta\sqrt{N}$.
\end{proof}

\subsection*{Acknowledgement}

Thanks to Brice Huang for helpful comments.

\footnotesize
\bibliographystyle{alphaabbr}
\bibliography{bib}

\end{document}